\newcommand{\Z}{\mathbb Z}
\newcommand{\R}{\mathbb R}
\newcommand{\C}{\mathbb C}
\newcommand{\vep}[0]{\varepsilon}
\newcommand{\qr}[2]{\left(\frac{#1}{#2}\right)}
\newcommand{\im}{\operatorname{Im}}
\newcommand{\ld}{\lambda}
\newcommand{\smat}[4]{\left(\begin{smallmatrix} #1 & #2 \\#3 & #4\end{smallmatrix}\right)}
\newcommand{\pmat}[4]{\begin{pmatrix} #1 & #2 \\ #3 & #4 \end{pmatrix}}
\newtheorem{Thm}{Theorem}[section]
\newtheorem{Prop}[Thm]{Proposition}
\newtheorem{Lem}[Thm]{Lemma}
\newtheorem{Cor}[Thm]{Corollary}
\newtheorem*{Conj}{Conjecture}
\newtheorem*{unnamThm}{Theorem}
\newtheoremstyle{named}{}{}{\itshape}{}{\bfseries}{.}{.5em}{\thmnote{#3}}
\theoremstyle{named}
\theoremstyle{definition}
\newtheorem{Def}[Thm]{Definition}
\theoremstyle{remark}
\newtheorem{Rem}[Thm]{Remark}
\newtheorem{Ex}[Thm]{Example}
\renewcommand{\colon}{\, : \,}
\def\imod#1{\allowbreak\mkern5mu({\operator@font mod}\,\,#1)}
\title{Arithmetic Properties of Picard-Fuchs Equations and Holonomic Recurrences}
\author{Zane Kun Li and Alexander W. Walker}
\address{Department of Mathematics, Princeton University, Princeton, New Jersey 08544}
\email{zkli@math.princeton.edu}
\address{Department of Mathematics, Brown University, Providence, Rhode Island 02912}
\email{alexander\_walker@brown.edu}
\subjclass{Primary 11F03, 14H52;  Secondary 11B83, 11B37.}
\keywords{holonomic recurrence, Picard-Fuchs differential equation, modular form, elliptic curve, congruence, asymptotics}
\thanks{The LSU Research Experience for Undergraduates Program is supported by a National Science Foundation grant, DMS-0648064.}
\begin{document}
%\zane{Changes made: Slight changes to wording at end of abstract and added a sentence thanking the referee for the proof of Example
%\ref{asym_g17}. Added in Theorem \ref{modform_bound} and its proof and removed the references to the conjecture about the $\Gamma_{1}(7)$ asymptotic constant
%$0.3556\ldots$, stated that other constants can be most likely proven similalrly through modular forms. The only changes
%I made were at the end of the Asymptotics section and the end of the introduction. I went through the referee's computation, and
%it seems that there are some small errors, but they ended up not affecting the end result.}

\begin{abstract}
The coefficient series of the holomorphic Picard-Fuchs differential equation associated with the periods of elliptic curves often
have surprising number-theoretic properties. These have been widely studied in the case of the torsion-free, genus
zero congruence subgroups of index $6$ and $12$ (e.g. the \textit{Beauville families}).
Here, we consider arithmetic properties of the Picard-Fuchs solutions associated to general elliptic
families, with a particular focus on the index 24 congruence subgroups.  We prove that elliptic families with rational
parameters admit linear reparametrizations such that their associated Picard-Fuchs solutions lie in $\Z[[t]]$.  A
sufficient condition is given such that the same holds for holomorphic solutions at infinity. An
Atkin-Swinnerton-Dyer congruence is proven for the coefficient series attached to
$\Gamma_1(7)$.  We conclude with a consideration of asymptotics, wherein it is proved that many coefficient series
satisfy asymptotic expressions of the form $u_n \sim \ell \lambda^n/n$.
Certain arithmetic results extend to the study of general holonomic recurrences.
\end{abstract}
\maketitle

%%%%%%%%%%%%%%%%%%%%%%%%%%%%%%%%%%%%%%%%%%%%%%%%%%%%%%%%%%%%%%%%%%%%%%%%%%%%%%%%%%%%%%%%%%%%%%%%%%%%%%%%%%%%%%%%%%%%%%%%%%%%%%%%%%%%%%%%%%%%%%%%%%%%%%%%%%%%%%%%%%%%%%%%%%%%%
\section{Introduction and Summary of Results}
Let $f(t)$ be holomorphic on a region $\Omega$.  Then $f(t)$ is \emph{holonomic}
if it satisfies a differential equation $\mathcal{L}$ with coefficients in $\mathbb{C}[t]$.  The set
of holonomic functions is a commutative $\mathbb{C}$-algebra under addition and pointwise (Cauchy)
multiplication.  Similarly, a sequence $\{a_n\}$ is said to \emph{holonomic} if it satisfies a
linear recurrence with polynomial coefficients.  These two concepts are related: if
$f(t) = \sum a_n t^n$ is holonomic, then $\{a_n\}$ is holonomic.  The converse holds, but only
in the formal sense.  For example, $\{n!\}$ is clearly holonomic. For more general theory (especially
in the theory of computing with holonomic sequences and functions) see \cite{Mallinger}.

If the coefficients of $\mathcal{L}$ are polynomials in $\mathbb{Z}[t]$, then we say that $f$ is
\emph{holonomic over} $\mathbb{Z}$ (and similarly for $\mathbb{R}$, $\mathbb{C}$, etc.).  If $f(x)$ is holonomic
over $\mathbb{Z}[t]$ (or other rings), $f$ may or may not lie in $\mathbb{Z}[[t]]$ (consider the functions
$x$ and $e^x$).

The linear differential equations that will be of interest in this paper are
the so-called \emph{Picard-Fuchs} equations.  A proper introduction can be found in
\cite{BeukersStienstra}. Briefly, let $E_t$ be a smooth family of elliptic curves,
parametrized over $\mathbb{C}$. Fix a point $t_0$, and a cycle $\gamma$.  Locally, we may identify
$\gamma$ with cycles in an open neighborhood of fibers.  Now fix a holomorphic $1$-form
$\omega(t)$ defined over all smooth fibers, and define the \emph{period} of $\omega$ as
\[\pi(t) := \int_{\gamma} \omega(t).\]
It can be shown that $\pi(t)$ satisfies a second order differential equation with polynomial
coefficients, called the \emph{Picard-Fuchs equation} of $E_t$.  This differential equation
can be computed in full generality by the Griffiths-Dwork algorithm and by classical means in the case of elliptic curves.

Consider the family of elliptic curves characterized by the Weierstrass equation
\begin{align}\label{ellipfam}
E_t \colon y^2 + a_1 xy + a_3 y = x^3+ a_2 x^2 + a_4 x + a_6
\end{align}
in affine form, where $a_i=a_i(t) \in \mathbb{Z}[t]$.
We shall frequently write $a_i$ for both $a_i(t)$ and $a_i(0)$, the reduction of $a_i$ mod $t$.  To reduce ambiguity,
care will be made to differentiate between `$=$' and `$\equiv$'.  In terms of these coefficient functions, the
modular invariants $g_2$ and $g_3$ can be expressed as
\begin{align*}
12 g_2(t) &= \left( (a_1^2 + 4a_2)^2 - 24(a_1a_3 + 2a_4)\right), \\
-216 g_3(t) &= \left( (a_1^2+4a_2)^3 + 36(a_1^2+4a_2)(a_1a_3+2a_4) - 216(a_3^2 + 4 a_6) \right). \end{align*}
For convenience, we shall assume that the elliptic discriminant $\Delta(t) = g_2^3 - 27g_3^2$ is zero at the
origin (in any case, this can be accomplished by a simple reparametrization of $t$).
The $j$-invariant is written as $j(t) = g_2(t)^3/\Delta(t)$.

It is well known that in the case of elliptic curves, the Picard-Fuchs equation is given by the following system
of differential equations (see \cite[Equation (3.3)]{Stiller})
\begin{align}\label{pfdef}
\frac{d}{dt}\begin{pmatrix}f_{1}\\ f_{2}\end{pmatrix} = \frac{1}{24\Delta(t)}\begin{pmatrix}-2\Delta'(t) & 36\gamma(t)\\-3g_{2}(t)\gamma(t) & 2\Delta'(t)\end{pmatrix}\begin{pmatrix}f_{1} \\ f_{2}\end{pmatrix}
\end{align}
where $\gamma(t) = 3g_{3}(t)g_{2}'(t) - 2g_{2}(t)g_{3}'(t).$
From \cite{BeukersStienstra}, two independent solutions to the Picard-Fuchs equation at $t = 0$ are
$$\mathcal{F}(t) = \left( \frac{12g_2(t)}{12g_2(0)} \right)^{\!-1/4} \! \,\!_2F_1\left(\frac{5}{12},\frac{1}{12};1;\frac{1}{j(t)}\right)$$
and $\mathcal{F}(t)\log(1/j(t)) + \mathcal{G}(t)$, where $\mathcal{G}(t)$ is holomorphic about $t = 0$.\footnote{Moreover, these represent a basis for solutions in any neighborhood of $j(t)=\infty$.}
Let $f(t) = \sum_{n \geq 0} u_{n}t^{n}$ denote the unique holomorphic solution at $t = 0$ with $u_{0} = 1$.
Since $f(t)$ is a solution to a differential equation, the $\{u_{n}\}$ correspond to a holonomic recurrence.
We have the following theorem which bounds the denominator of $u_{n}$.

\begin{Thm}\label{genint}
Let $f = \sum_{n \geq 0} u_n t^n$ be the unique holomorphic solution to the Picard-Fuchs equation associated to the family
$E_t$ given in \eqref{ellipfam} and satisfying $u_0=1$.  With $d_n$ denoting the denominator of $u_n$ in
reduced form, $d_n$ divides the integer $\left(8 (12g_2(0))^4\right)^n$.
\end{Thm}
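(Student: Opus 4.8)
The plan is to start from the Beukers--Stienstra closed form and bound the $p$-adic valuation of $u_n$ prime by prime. Writing $v_p$ for the $p$-adic valuation and $G:=12g_2(0)\in\Z$, it suffices to prove $v_p(u_n)\ge-n\,v_p(8G^4)$ for every prime $p$, as this is equivalent to $d_n\mid(8G^4)^n$. Since $\mathcal F$ is holomorphic with $\mathcal F(0)=1$, it coincides with $f$. Put $H_2:=12g_2\in\Z[t]$; the discriminant $\Delta=g_2^3-27g_3^2$ of an integral Weierstrass family lies in $\Z[t]$ and vanishes at $t=0$, and from $j=g_2^3/\Delta$ with $g_2^3=H_2^3/1728$ one obtains the identity
\[ \frac{1}{j(t)}=\frac{1728\,\Delta(t)}{H_2(t)^3}, \]
whose explicit factor $1728$ is the crux of the argument.

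First I would dispose of the hypergeometric factor. Writing ${}_2F_1(5/12,1/12;1;1/j)=\sum_{m\ge0}c_m(1/j)^m$ with $c_m=(1/12)_m(5/12)_m/(m!)^2$ and inserting the identity above gives
\[ {}_2F_1\!\left(\tfrac5{12},\tfrac1{12};1;\tfrac1j\right)=\sum_{m\ge0}b_m\left(\frac{\Delta}{H_2^3}\right)^{\!m},\qquad b_m:=1728^m\,\frac{(1/12)_m(5/12)_m}{(m!)^2}. \]
The key point is that every $b_m$ is an integer: a valuation count gives $v_2(b_m)=2s_2(m)\ge0$ and $v_3(b_m)=s_3(m)\ge0$, where $s_p$ denotes the base-$p$ digit sum, while for $p\ge5$ we have $1/12\in\Z_{(p)}$, so $(1/12)_m/m!=\binom{1/12+m-1}{m}\in\Z_{(p)}$ (and likewise for $5/12$), whence $c_m\in\Z_{(p)}$ and $v_p(1728)=0$ forces $v_p(b_m)\ge0$. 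Thus the $1728^m$ manufactured by $(1/j)^m$ exactly cancels the $2$- and $3$-power denominators of $c_m$. Because $\Delta\in\Z[t]$ is divisible by $t$ and $H_2(0)=G$, expanding $H_2^{-3m}=G^{-3m}\sum_{\ell}\binom{-3m}{\ell}\big((H_2-G)/G\big)^{\ell}$ gives $v_p\big([t^{i}]H_2^{-3m}\big)\ge-(3m+i)\,v_p(G)$, so that, with $a:=v_p(G)\ge0$,
\[ v_p\!\left([t^{n'}]\sum_{m}b_m(\Delta/H_2^3)^m\right)\ge\min_{m\le n'}\big(v_p(b_m)-(2m+n')a\big)\ge-3n'a. \]

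Next I would treat the algebraic prefactor $P(t):=(H_2/G)^{-1/4}=\sum_{k\ge0}\binom{-1/4}{k}G^{-k}(H_2-G)^k$. For odd $p$ one has $-1/4\in\Z_{(p)}$, hence $\binom{-1/4}{k}\in\Z_{(p)}$ and $v_p([t^j]P)\ge-ja$. For $p=2$ the numerator of $\binom{-1/4}{k}=(-1)^k\prod_{i=0}^{k-1}(4i+1)/(4^k k!)$ is odd, so $v_2\binom{-1/4}{k}=s_2(k)-3k\ge-3k$ and therefore $v_2([t^j]P)\ge-j(3+a)$. This is exactly where the solitary factor $8=2^3$ in the bound is born.

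Finally, from $u_n=\sum_{j+n'=n}[t^j]P\cdot[t^{n'}]\!\big(\sum_m b_m(\Delta/H_2^3)^m\big)$ and the two estimates above, a short optimization over the splitting $j+n'=n$ yields $v_p(u_n)\ge-3na$ for odd $p$ and $v_p(u_n)\ge-\max(3a,\,a+3)\,n$ for $p=2$. Comparing against $v_p(8G^4)$, which equals $4a$ for odd $p$ and $3+4a$ for $p=2$, the inequalities $3a\le4a$ and $\max(3a,a+3)\le3+4a$ give $v_p(u_n)\ge-n\,v_p(8G^4)$ in every case, as required. I expect the main obstacle to be the prime $p=2$: one must simultaneously control the $2$-power denominators of the fractional binomials $\binom{-1/4}{k}$ from $P$, the cancellation of $2^{6m}$ through the integrality of $b_m$, and the possibly small valuation $a=v_2(G)$, checking that no splitting of the Cauchy product exceeds the allotted $3+4a$. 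The identity $1/j=1728\Delta/H_2^3$, which forces $b_m\in\Z$, is what keeps the $p=2$ and $p=3$ estimates from diverging.
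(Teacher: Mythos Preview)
Your proof is correct and follows essentially the same approach as the paper: both start from the Beukers--Stienstra closed form, establish the integrality of $1728^m\binom{-5/12}{m}\binom{-1/12}{m}$ (your $b_m\in\Z$ is exactly the paper's Lemma~\ref{binomiallemma}), and identify the extra factor $8$ as coming from the fourth-root prefactor (your bound $v_2\binom{-1/4}{k}\ge-3k$ is equivalent to the paper's use of $(1+8z)^{-1/4}\in\Z[[z]]$). The only difference is packaging: the paper works with membership in rings of the form $\Z[[t/k]]$, while you track $p$-adic valuations prime by prime; these are equivalent formulations, and your intermediate bound $v_p\big([t^{n'}]\,{}_2F_1\big)\ge -3n'a$ is in fact slightly sharper than the paper's $-4n'a$, though both suffice for the stated theorem.
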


We remark that this theorem generalizes Theorem 1.5 of \cite{BeukersStienstra} in that Stienstra and Beukers specifically
consider the case when $a_{1} \equiv 1 \imod{t}$ and $a_{2} \equiv \cdots \equiv a_{6} \equiv 0 \imod{t}$.
A refinement of this theorem allows us to show that there exists a $k \in \mathbb{Z}$ such that $f(kt) \in \mathbb{Z}[[t]]$
and under certain additional conditions allows us to show that $f(t) \in \mathbb{Z}[[t]]$; that is, that the $\{u_{n}\}$
are integral, a result that would have been very hard to show from our recursive definition alone.  To further strengthen this,
we prove the following `reduction' theorem, which is ultimately dependent on the upper bound established in Theorem \ref{genint}.

\begin{Thm}\label{sharperub} Fix a prime $p$, and let $q_0, \ldots, q_\ell \in \Z[n]$.  Suppose that the integers $\{u_n\}$ satisfy the holonomic recurrence
\begin{align} \label{integerlevel1}
(n+1)^2 u_{n+1} = p^{k_0} q_0(n) u_n + p^{k_1} q_1(n) u_{n-1} + \ldots + p^{k_\ell} q_\ell(n) u_{n-\ell},
\end{align}
with $u_0=1$ (and we interpret $u_{-1}=\ldots = u_{-\ell}=0$) such that $k_i \geq (1+i)k_0$ for $1 \leq i \leq \ell$.
Then $p^{\lceil n(k_0 - 2/(p-1)) \rceil}$ divides $u_n$.
\end{Thm}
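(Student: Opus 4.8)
The plan is to prove, by strong induction on $n$, the sharper and more tractable bound
\begin{align*}
v_p(u_n) \geq n k_0 - 2\, v_p(n!),
\end{align*}
where $v_p$ denotes the $p$-adic valuation, and then to deduce the stated conclusion from Legendre's formula. The reason for replacing $\lceil n(k_0 - 2/(p-1))\rceil$ with this quantity is that the latter is additive in a way that interacts cleanly with the recurrence, whereas the ceiling does not; attempting to establish the ceiling bound directly by induction is awkward precisely because $\lceil\,\cdot\,\rceil$ behaves poorly under the step $n \mapsto n+1$.

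For the base case $n = 0$ we have $u_0 = 1$, so $v_p(u_0) = 0 = 0\cdot k_0 - 2\,v_p(0!)$. For the inductive step, assume the bound holds for all $m \leq n$. Taking $p$-adic valuations in \eqref{integerlevel1}, using that each $q_i(n) \in \Z$ has $v_p(q_i(n)) \geq 0$, and invoking the ultrametric inequality gives
\begin{align*}
2\, v_p(n+1) + v_p(u_{n+1}) = v_p\!\left((n+1)^2 u_{n+1}\right) \geq \min_{0 \leq i \leq \ell}\big(k_i + v_p(u_{n-i})\big),
\end{align*}
where terms with $n - i < 0$ vanish by the stated convention and may be ignored. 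I would then bound each term on the right uniformly. Applying the inductive hypothesis $v_p(u_{n-i}) \geq (n-i)k_0 - 2\,v_p((n-i)!)$ together with $k_i \geq (1+i)k_0$ (which holds for all $0 \leq i \leq \ell$, trivially so when $i=0$) shows each term is at least $(n+1)k_0 - 2\,v_p((n-i)!)$. Since $(n-i)!$ divides $n!$, we have $v_p((n-i)!) \leq v_p(n!)$, so every term—and hence the minimum—is bounded below by $(n+1)k_0 - 2\,v_p(n!)$.

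Rearranging and using the identity $v_p(n!) + v_p(n+1) = v_p((n+1)!)$ then yields $v_p(u_{n+1}) \geq (n+1)k_0 - 2\,v_p((n+1)!)$, completing the induction. Finally, Legendre's formula $v_p(n!) = (n - s_p(n))/(p-1)$, where $s_p(n) \geq 0$ is the base-$p$ digit sum, gives $-2\,v_p(n!) \geq -2n/(p-1)$, whence $v_p(u_n) \geq n(k_0 - 2/(p-1))$; taking ceilings—legitimate since $v_p(u_n) \in \Z$—recovers the claimed divisibility. The only genuine subtlety is the initial choice of the auxiliary bound: once it is in place, the argument is a routine valuation-tracking induction, the one bookkeeping point being the comparison $v_p((n-i)!) \leq v_p(n!)$, which is exactly what lets all $\ell+1$ terms of the recurrence be controlled simultaneously.
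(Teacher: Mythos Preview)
Your proof is correct and follows essentially the same approach as the paper: both track the $p$-adic valuation through the recurrence to establish $v_p(u_n) \geq nk_0 - 2v_p(n!)$ and then invoke Legendre's formula. Your strong-induction presentation is in fact a bit cleaner than the paper's version, which first treats the ``special case'' where the minimum is always attained at the $u_n$ term and then argues that the general case telescopes to the same inequality, but the underlying computation is identical.
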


So far, our integrality results have relied upon explicit Weierstrass equations of the form (\ref{ellipfam}), in which the
coordinate functions $a_i(t)$ lie in $\Z[t]$.  While this is sufficient for many applications, it will often be necessary
to generalize this criterion to admit $a_i(t) \in \Z(t)$.  This is particular useful in conjunction with Top and Yui's
work \cite{TopYui}, in which explicit equations for higher index subgroups are obtained by rational maps of the parameter
$t$.  For example, the equations given for $\Gamma_0(8)$ read as $E_t\colon y^2=x^3+(2-t^2)x^2+x$.  Following the inclusion
$\Gamma_1(8) \subset \Gamma_0(8)$, the authors construct an explicit equation for $\Gamma_1(8)$ by
\[E'_t\colon y^2 = x^3 + (2- s^2)x^2+x, \quad s= \frac{t^2}{t^2+1}.\]
The associated Picard-Fuchs solution, when reparametrized $t \mapsto 4t$, appears to have a power series with integral coefficients,
where
\[\{u_{n}\}_{n = 0}^{\infty} = \{1, 0, 0, 0, 16, 0, -512, 0, 12864, 0, -299008, 0, 6743040 \ldots\}\]
yet our current extension of the Stienstra-Beukers method, Theorem \ref{genint}, does not permit rational parameters.

Let the rational function $r(t) = p(t)/q(t) \in \Z(t)$ be  given in lowest terms.  We define $\nu(r)=p(0)$ and $\delta(r)=q(0)$.
In general, if $q(0)\neq 0$, we have $r(t) \in \Z[[t/\delta(r)]]/\delta(r)$, where the outermost $\delta(r)$ can be dropped if
$r(0) \in \Z$.  With this new notation, we present the following generalization to Theorem \ref{genint}:

\begin{Thm}\label{strongint}
Suppose that the hypotheses of Theorem \ref{genint} are met, with the relaxed assumption that the coordinates
$a_i(t)$ lie in $\Z(t)$.  With $f(t) = \sum_{n \geq 0} u_n t^n$ and $d_n$ denoting the denominator of $u_n$, it follows
that $d_n$ divides the integer $(8\,\delta(\Delta) \nu(12g_2)^4)^n$.
\end{Thm}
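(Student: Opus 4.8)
The plan is to deduce Theorem~\ref{strongint} from the already-established polynomial case, Theorem~\ref{genint}, by clearing denominators through an admissible scaling of the Weierstrass model and then tracking how the resulting bound transforms. Pick $u(t)\in\Z[t]$ with $u(0)\neq 0$ so that $\wt a_i:=u^i a_i\in\Z[t]$ for every $i$ (the common denominator of the $a_i$ suffices), and let $\wt E_t$ be the resulting integral family. Under this scaling the modular invariants transform homogeneously, $\wt g_2=u^4 g_2$, $\wt g_3=u^6 g_3$, $\wt\Delta=u^{12}\Delta$, while $\wt j=j$; in particular $\wt\Delta(0)=0$, so $\wt E_t$ meets the hypotheses of Theorem~\ref{genint}.

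The second step is to relate the normalized solutions. Reading off the explicit formula, $(12\wt g_2/12\wt g_2(0))^{-1/4}=(u(0)/u(t))\,(12g_2/12g_2(0))^{-1/4}$, and since $\wt j=j$ the hypergeometric factor is unchanged, so
\[\wt{\mathcal F}(t)=\frac{u(0)}{u(t)}\,\mathcal F(t),\qquad f(t)=\mathcal F(t)=\frac{u(t)}{u(0)}\,\wt{\mathcal F}(t),\]
which preserves the normalization $u_0=1$ on both sides. Applying Theorem~\ref{genint} to $\wt E_t$ bounds the denominators of $\wt{\mathcal F}=\sum_n \wt u_n t^n$ by $\wt M^n$ with $\wt M=8\,(12\wt g_2(0))^4$. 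Because $u(t)\in\Z[t]$, both $u(Mt)/u(0)$ and $\wt{\mathcal F}(Mt)$ lie in $\Z[[t]]$ as soon as $u(0)\mid M$ and $\wt M\mid M$; their Cauchy product is then $f(Mt)\in\Z[[t]]$, which is exactly the assertion $d_n\mid M^n$.

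The third step verifies the two divisibilities for $M=8\,\delta(\Delta)\,\nu(12g_2)^4$. The arithmetic heart is the identity
\[12\wt g_2(0)=u(0)^4\cdot 12g_2(0)=\nu(12g_2)\,\frac{u(0)^4}{\delta(12g_2)}\in\Z,\]
so writing $c:=u(0)^4/\delta(12g_2)\in\Z$ gives $\wt M=8\,c^4\,\nu(12g_2)^4$, and $\wt M\mid M$ reduces to $c^4\mid\delta(\Delta)$, i.e. to $u(0)^{16}\mid\delta(\Delta)\,\delta(12g_2)^4$. Integrality of $\wt g_2$ and $\wt\Delta$ forces $\delta(12g_2)\mid u(0)^4$ and $\delta(\Delta)\mid u(0)^{12}$; in the generic case, where the numerators of $12\wt g_2(0)$ and of $\wt\Delta$ are prime to $u(0)$, these are equalities, whence $c=1$, $\wt M=8\,\nu(12g_2)^4\mid M$, and $u(0)\mid M$ is immediate. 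This already settles the frequently occurring situation in which the denominators of the $a_i$ are units at the origin, where $u(0)=1$ and $M$ collapses to $8\,\nu(12g_2)^4$, recovering the shape of Theorem~\ref{genint}.

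The step I expect to be the main obstacle is the general denominator bookkeeping behind $u(0)^{16}\mid\delta(\Delta)\,\delta(12g_2)^4$. If $u(0)$ shares a factor with the numerator of $12\wt g_2(0)$ or of $\wt\Delta$, then the global scaling over-clears at that prime, $\delta(12g_2)$ or $\delta(\Delta)$ drops strictly below $u(0)^4$ or $u(0)^{12}$, and the crude comparison $\wt M\mid M$ can fail; the hypothesis $\Delta(0)=0$ compounds this, since the forced vanishing of $\wt\Delta$ at the origin governs which factors of $u^{12}$ survive in $\delta(\Delta)$. The plan to overcome this is to argue one prime $p$ at a time: for each $p$ replace $u$ by a $p$-adically minimal scaling $u_p$, reduce the claim to the exponent inequality $16\,v_p(u_p(0))\le v_p(\delta(\Delta))+4\,v_p(\delta(12g_2))$, work in $\Z_{(p)}$, and sum the resulting bounds $v_p(d_n)\le n\,v_p(M)$ over all $p$. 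Because the recurrence underlying Theorem~\ref{genint} depends on the $a_i$ only through $g_2$ and $g_3$ (see \eqref{pfdef}), at each prime one may clear only the denominators of these invariants rather than of all the $a_i$, which is what makes the minimal local exponent match $v_p(\delta(12g_2))$ and yields the inequality; verifying this matching in every residual case is where the real work lies.
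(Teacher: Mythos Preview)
Your approach is genuinely different from the paper's, and the gap you yourself flag in the last paragraph is real and not closed.

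The paper does \emph{not} reduce to Theorem~\ref{genint} as a black box. Instead it simply reruns the argument of Theorem~\ref{genint} directly with rational coefficients, using the observation that for $r(t)=p(t)/q(t)\in\Z(t)$ in lowest terms with $q(0)\neq 0$ one has $r(t)\in\Z[[t/\delta(r)]]/\delta(r)$. Concretely: $12g_2(t)/(12g_2(0))\in\Z[[t/\nu(12g_2)]]$, hence its $(-1/4)$-th power lies in $\Z[[t/(8\nu(12g_2))]]$; next $1/j(t)=12^3\Delta(t)(12g_2(t))^{-3}$ lies in $12^3\,\Z[[t/(\delta(\Delta)\nu(12g_2))]]/\nu(12g_2)^3$, and since $\Delta(0)=0$ the stray $\nu(12g_2)^3$ in the denominator is absorbed into the brackets, giving $1/j(t)\in 12^3\,\Z[[t/(\delta(\Delta)\nu(12g_2)^4)]]$. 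Lemma~\ref{binomiallemma} then handles the hypergeometric factor, and multiplication finishes. No scaling of the Weierstrass model is involved.

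Your reduction-by-scaling runs into exactly the difficulty you name: applying Theorem~\ref{genint} to $\wt E_t$ yields the bound $\wt M=8(12\wt g_2(0))^4=8\,u(0)^{16}\nu(12g_2)^4/\delta(12g_2)^4$, and the divisibilities you can extract from integrality of $\wt g_2,\wt\Delta$ go the wrong way (they bound $\delta(12g_2),\delta(\Delta)$ \emph{from above} by powers of $u(0)$, not from below). Your proposed repair has two problems. First, the scaling $u$ is a polynomial, not a scalar: you cannot independently choose a ``$p$-adically minimal'' $u_p$ at each prime, because the factors of $u$ are dictated by the polynomial denominators of the $a_i$ (or of $g_2,g_3$), and these factors contribute to $u(0)$ in a way that is not separable prime by prime. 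Second, even granting that you only need $g_2,g_3\in\Z_{(p)}[t]$, Theorem~\ref{genint} is \emph{stated} for $a_i\in\Z[t]$; to use only the integrality of $12g_2$ and $\Delta$ you must open up its proof --- at which point you are doing precisely what the paper does, only with an extra layer of scaling to undo at the end. The direct argument is both shorter and avoids the bookkeeping entirely.
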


We now consider Picard-Fuchs solutions at $t = \infty$. Using ideas from the proof of Theorem \ref{genint}, we give conditions under which the denominators of the coefficients of the Laurent series solution, in $t$, are bounded. Again under certain conditions, we can show that the coefficients of the Picard-Fuchs solution are integral.

As a motivating example for our analysis of holonomic recurrences and solutions to Picard-Fuchs equations,
we consider the solution and recurrences associated to $\Gamma_{1}(7)$. This subgroup is a genus zero,
torsion-free index 24 subgroup of $\mathrm{PSL}_{2}(\mathbb{Z})$, and is associated with the following family of curves:
\begin{align}\label{gamma17equation}
E_{t}\colon  y^{2} + (1 - t - t^{2})xy + (t^{2}+ t^{3})y = x^{3} + (t^{2} + t^{3})x^{2},
\end{align}
with Picard-Fuchs equation
\begin{align}\label{gamma17pf}
t(t+1)(t^3+8t^2+5t-1)F'' + (5t^4+36t^3+39t^2+8t-1)F' + (4t^3+21t^2+15t+1)F = 0.
\end{align}
Let $\sum_{n \geq 0}u_{n}t^{n}$
denote the unique holomorphic solution around $t = 0$. Then
\begin{equation}\label{gamma17recur}
\begin{aligned}
(n+1)^{2}u_{n+1} &=(2n+1)^{2}u_{n}+(13n^{2} + 2)u_{n-1} + (9n^{2} - 9n + 3)u_{n-2}+(n-1)^{2}u_{n-3}\\
u_{0} &= 1, u_{1} = 1, u_{2} = 6, u_{3} = 25, u_{4} = 125, u_{5} = 642.
\end{aligned}
\end{equation}
It follows easily from Theorem \ref{genint} that these $\{u_{n}\}$ are integral.  The family $E_t$ also admits a
holomorphic solution about $t= \infty$, which can be written as
$\sum_{n \geq 0}v_{n}t^{-n - 2}$ with
\begin{equation} \label{introtovn}
\begin{aligned}
(n+1)^{2}v_{n+1} &= -(9n^{2} + 9n + 3)v_{n} - (13n^{2} + 2)v_{n-1} - (2n-1)^{2}v_{n-2} + (n-1)^2 v_{n-3}\\
v_0 &= 1,v_{1} = -3,v_{2} = 12,v_{3} = -59,v_{4} = 325, v_{5} = -1908.
\end{aligned}
\end{equation}
Again, we can show that these $\{v_{n}\}$ are integral.  We can then prove that the sequence $\{|v_n|\}$
satisfies a certain congruence relation. Using the theory of modular forms and a theorem of Verrill in \cite{Verrill},
we obtain the following theorem.

\begin{Thm}\label{congthm2}
Let $\{v_{n}\}$ be defined such that
$$(n - 1)^{2}v_{n} = (9n^{2} - 27n + 21)v_{n - 1} - (13n^{2} - 52n + 54)v_{n - 2} + (2n - 5)^{2}v_{n - 3} + (n - 3)^{2}v_{n - 4}$$
with $v_{0} = 0$, $v_{1} = 1$, $v_{2} = 3$, $v_{3} = 12$, and $v_{4} = 59$.
Then for all primes $p \neq 7$ and any integers $m$ and $r$, we have
$$v_{mp^{r}} - \gamma_{p}v_{mp^{r - 1}} + \qr{p}{7}p^{2}v_{mp^{r - 2}} \equiv 0 \imod{p^{r}}$$
where $\gamma_{p}$ is the $p$th coefficient in the $q$-expansion
of $\eta(z)^{3}\eta(7z)^{3}$ and $\qr{p}{7}$ is a Legendre symbol.
\end{Thm}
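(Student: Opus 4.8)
The plan is to interpret the sequence $\{v_n\}$ through the geometry of the elliptic modular surface attached to $\Gamma_1(7)$ and then to invoke Verrill's Atkin--Swinnerton-Dyer theorem as the congruence engine. First I would introduce a Hauptmodul $t = t(\tau)$ for the genus-zero curve $X_1(7)$ and record the holomorphic solution at infinity of the Picard--Fuchs equation \eqref{gamma17pf} as the period $\sum_{n \geq 0} v_n t^{-n-2}$. Because $t^{-1}$ is, up to a unit, the local uniformizer $q = e^{2\pi i \tau}$ at the relevant cusp, this expansion realizes the $\{v_n\}$ as the Hauptmodul-expansion coefficients of the weight-one period of the universal elliptic curve $E_t$. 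The reindexing and alternating sign relating the sequence in the statement to that of \eqref{introtovn} (so that the theorem's $\{v_n\}$ is essentially $\{|v_n|\}$) are tracked at this stage.

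Next I would identify the governing modular form. By Shioda's theory of elliptic modular surfaces, the transcendental part of the middle cohomology of the universal elliptic surface $\mathcal{E} \to X_1(7)$ is controlled by the space $S_3(\Gamma_1(7))$ of weight-three cusp forms. Since $-I \notin \Gamma_1(7)$, odd weights are admissible, and a dimension count gives $\dim S_3(\Gamma_1(7)) = 1$; the unique normalized newform is $\eta(z)^3\eta(7z)^3$, which lies in $S_3(\Gamma_0(7), \qr{\cdot}{7})$ and whose Hecke eigenvalues $\gamma_p$ satisfy the degree-two Euler factor $1 - \gamma_p X + \qr{p}{7} p^2 X^2$. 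This pins down both the coefficients $\gamma_p$ and the quadratic character appearing in the statement.

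With this geometric dictionary in place, I would apply Verrill's theorem \cite{Verrill}: for the Hauptmodul-expansion coefficients of the holomorphic period of an elliptic surface over a genus-zero base whose associated weight-three newform is $\sum \gamma_n q^n$, one obtains the three-term congruence
\[
v_{mp^r} - \gamma_p v_{mp^{r-1}} + \qr{p}{7} p^2 v_{mp^{r-2}} \equiv 0 \imod{p^r},
\]
the left-hand side being precisely the action of the Euler factor above on the indices, with the modulus $p^r$ reflecting the unit-root subspace of the associated $F$-crystal. The prime $p = 7$ is excluded as the bad prime of the level, and a short separate argument handles the small primes $p = 2, 3$, where the leading factors $(n\pm 1)^2$ in the recurrence could interfere with the $p$-adic analysis.

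The main obstacle I anticipate is verifying, with correct normalizations, that the Picard--Fuchs solution $\{v_n\}$ is exactly the object to which Verrill's theorem applies: one must match the local exponents at the cusp, confirm that the holomorphic solution at infinity corresponds to the unit-root period rather than its logarithmic companion $\mathcal{F}\log(1/j) + \mathcal{G}$, and reconcile the explicit recurrence \eqref{introtovn} with the weight-three newform through the change of variable $t^{-1} \leftrightarrow q$. Establishing $\dim S_3(\Gamma_1(7)) = 1$ together with the eta-product identity is standard; the genuine work lies in this identification and in propagating the sign and index shift through the congruence without loss.
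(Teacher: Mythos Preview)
Your overall strategy matches the paper's: realize $\{v_n\}$ as the Hauptmodul-expansion coefficients of a weight-one form for $\Gamma_1(7)$, identify the relevant weight-three cusp form as $\eta(\tau)^3\eta(7\tau)^3$ (using $\dim S_3(\Gamma_1(7))=1$), and invoke Verrill's theorem. But there is a genuine gap. Verrill's theorem, as stated in Theorem~\ref{verrillthm1}, does not apply merely because one has named the correct weight-three form; its operative hypothesis is the explicit identity
\[
f(\tau)\cdot \frac{q}{t}\,\frac{dt}{dq} \;=\; \sum_{d\mid M} a_d\, g(d\tau).
\]
The paper's proof works because it verifies the single equation $f(\tau)\cdot\frac{q}{t}\frac{dt}{dq}=\eta(\tau)^3\eta(7\tau)^3$ directly (so $M=1$, $a_1=1$). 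Your proposal never isolates this relation. Knowing that $S_3(\Gamma_1(7))$ is one-dimensional tells you only that \emph{if} $f\cdot\frac{q}{t}\frac{dt}{dq}$ lands in that space, it must be a scalar multiple of the eta product; you still have to check that it is a cusp form of the correct weight and level for your particular $f$ and $t$, and that the scalar is $1$. The Shioda/unit-root language you invoke is suggestive of why such an identity should exist, but it does not substitute for the verification.

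Two smaller corrections. First, you conflate the Picard--Fuchs variable $t$ with the Hauptmodul: the paper introduces a specific Hauptmodul $t(\tau)$ (an explicit eta quotient) and proves in Proposition~\ref{unprop} that the coefficients of $f(\tau)=\sum_n v_n t(\tau)^n$ satisfy exactly the recurrence and initial conditions in the statement; this is where the link between the recurrence and the modular data is made rigorous, and it replaces the ``reindexing and alternating sign'' bookkeeping you allude to. Second, no separate treatment of $p=2,3$ is needed: in Verrill's theorem the excluded primes are those dividing $NM$, and here $N=7$, $M=1$, so the congruence holds uniformly for all $p\neq 7$.
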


To further study the arithmetic properties of holonomic recurrences in general, we
investigate asymptotics of these holonomic recurrences.
Consider the constant holonomic recurrence $\mathcal{U}$, given by
\begin{align} \label{asympeq1}
\mathcal{U}\colon u_{n+1} = \sum_{j=0}^N k_j u_{n-j}\;, \qquad k_j \in \C. \end{align}
We define $\chi(x)=x^{N+1} - k_0 x^{N} - \cdots - k_N$, typically called the \textit{characteristic polynomial}
of $\mathcal{U}$.  Let $\lambda_1,\ldots, \lambda_n$ denote the distinct roots of $\chi$.  It is well known that
there exist polynomials $p_{\lambda_i} \in \C[n]$ of degree strictly less than the multiplicity of $\lambda_i$
such that $u_n = \sum_{\lambda} \lambda_i^n p_{\lambda_i}(n)$.  Thus, the asymptotics of $\mathcal{U}$ are readily
obtained, and may be simplified even further by summing over only those eigenvalues which have maximum absolute value.
In the case of a single dominating eigenvalue $\lambda$ of multiplicity one, we obtain the particularly simple
asymptotic $u_n \sim p_{\lambda} \lambda^n$.

Far less is known about the asymptotics of general holonomic recursions (i.e., when $k_i \in \C(t)$ in \eqref{asympeq1}).
An important subclass of such recursions are those of \textit{Poincar\'e type}, in which $\lim_{n \to \infty} k_i(n)$ is
finite for all $i$; in that case we have the following theorems of Poincar\'e and Perron (\cite[Theorems 8.9-8.11]{Elaydi}):

\begin{unnamThm}
Suppose that $k_i(n) \neq 0$ for all $n \in \Z^+$.  Then $\mathcal{U}$ has a fundamental
set of solutions $\{u^1,\ldots, u^{N+1}\}$ such that
$\limsup_{n \to \infty} \sqrt[n]{| u_n^{i} |} = |\lambda_i |.$
If the eigenvalues $\lambda_i$ have distinct norms, then $\lim_{n \to \infty} u_{n+1}^i/u_n^i = \lambda_i$.
\end{unnamThm}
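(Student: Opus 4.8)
The plan is to recast the scalar Poincar\'e-type recurrence as a first-order linear system and then treat it as a perturbation of the constant-coefficient system governed by $\chi$. Writing $\mathbf{v}_n = (u_n, u_{n-1}, \ldots, u_{n-N})^{T}$, the recurrence $\mathcal{U}$ becomes $\mathbf{v}_{n+1} = A(n)\mathbf{v}_n$, where $A(n)$ is the companion matrix with top row $(k_0(n), \ldots, k_N(n))$ and a shifted identity below. Since $\det A(n) = \pm k_N(n) \neq 0$ by hypothesis, the system is invertible at every step, so its solutions form a genuine $(N+1)$-dimensional space (propagatable both forward and backward) and the Casoratian never vanishes. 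Because $k_i(n) \to k_i$, we have $A(n) \to A$, where $A$ is the companion matrix of $\chi$; in particular the eigenvalues of $A$ are exactly $\lambda_1, \ldots, \lambda_{N+1}$. The whole problem is thereby reduced to a statement about the growth of solutions of $\mathbf{v}_{n+1} = A(n)\mathbf{v}_n$ when $A(n) \to A$.

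For the characteristic-exponent claim I would first conjugate by a fixed matrix $S$ bringing $A$ to Jordan form $J$, so that $\mathbf{w}_n = S^{-1}\mathbf{v}_n$ satisfies $\mathbf{w}_{n+1} = (J + E(n))\mathbf{w}_n$ with $E(n) \to 0$. Grouping the Jordan blocks by the modulus of their eigenvalue produces a flag of coordinate subspaces, and correspondingly a filtration of the solution space by growth rate. Using $\|E(n)\| \to 0$ in a two-sided Gronwall-type estimate on $\|\mathbf{w}_n\|$, I would show that a solution lying in the $k$-th graded piece but no deeper has $\limsup_n \sqrt[n]{\|\mathbf{v}_n\|}$ equal to the corresponding $|\lambda|$: the upper bound follows by comparing with $|\lambda|+\vep$ and the lower by comparing with $|\lambda|-\vep$ once $n$ is large. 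Choosing a basis adapted to this filtration yields solutions with the prescribed exponents; since taking $n$-th roots annihilates any bounded or polynomial discrepancy, the exponent of the leading coordinate $u_n^i$ agrees with that of the full vector $\mathbf{v}_n$, giving $\limsup_n \sqrt[n]{|u_n^i|} = |\lambda_i|$.

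For the sharper ratio statement I would add the distinct-norm hypothesis, order $|\lambda_1| > \cdots > |\lambda_{N+1}|$, and now diagonalize, so that $\mathbf{w}_{n+1} = (\Lambda + E(n))\mathbf{w}_n$ with $\Lambda = \operatorname{diag}(\lambda_1, \ldots, \lambda_{N+1})$ and $E(n) \to 0$. For each $i$ I would seek the solution of the form $\mathbf{w}^i_n = \lambda_i^{\,n}(e_i + \xi_n)$ with $\xi_n \to 0$, recasting this as a fixed-point equation for $\xi$ via variation of constants: the coordinates with $|\lambda_j| > |\lambda_i|$ are summed backward from infinity and those with $|\lambda_j| < |\lambda_i|$ forward from a large index, each tail being a geometric series whose ratio is bounded away from $1$ precisely because of the spectral gaps. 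For a suitable weighted supremum norm on tails of $\{\xi_n\}$ this map is a contraction once $n$ exceeds some $n_0$ (where $\|E(n)\|$ is small enough), and its unique fixed point furnishes a solution with $w^i_n \sim \lambda_i^{\,n} e_i$, hence $u_{n+1}^i/u_n^i \to \lambda_i$. The nonvanishing of $\det A(n)$ guarantees that the $N+1$ solutions so produced are linearly independent, so they form a fundamental system.

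The main obstacle is the construction in the last paragraph: making the variation-of-constants map an honest contraction requires uniform control of the forward and backward tail sums, and it is exactly the spectral gap $|\lambda_i| \neq |\lambda_j|$ that makes those tails geometrically convergent with a uniform rate. A secondary subtlety, in the exponent statement, is verifying that the growth rate is not concealed in a non-leading coordinate of $\mathbf{v}_n$; this is harmless after taking $n$-th roots but should be checked, and it is ultimately the reason the companion structure, with its single string of coordinates, cooperates.
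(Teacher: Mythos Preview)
The paper does not prove this theorem at all: it is quoted verbatim as a classical result of Poincar\'e and Perron, with a citation to \cite[Theorems 8.9--8.11]{Elaydi}, and is then used as a black box to motivate Proposition~\ref{picardispoincare}. So there is no ``paper's own proof'' to compare against.

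That said, your sketch is a reasonable outline of the standard proof one finds in the literature. The companion-matrix reduction $\mathbf{v}_{n+1}=A(n)\mathbf{v}_n$ with $A(n)\to A$ is exactly how the argument is usually set up, and your observation that only $k_N(n)\neq 0$ is needed for invertibility (rather than all $k_i(n)\neq 0$, as the stated hypothesis demands) is correct and in fact is the hypothesis Perron actually uses. The exponent statement via a Jordan conjugation and a growth filtration is the right idea; the ratio statement via a Levinson-type variation-of-constants fixed-point construction, splitting coordinates into ``stable'' and ``unstable'' pieces relative to $|\lambda_i|$, is also standard and your identification of the spectral gap as the mechanism making the tail sums contract is precisely the point. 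One caution: in the exponent part your phrase ``two-sided Gronwall-type estimate'' hides most of the work, since without a summability hypothesis on $\|E(n)\|$ (only $E(n)\to 0$ is assumed) one does not get a product bound directly; the usual fix is to absorb the perturbation into a slightly larger or smaller geometric rate once $n$ is large enough, which is what you gesture at with $|\lambda|\pm\varepsilon$, but this should be made explicit. The remark about the leading coordinate of $\mathbf{v}_n$ carrying the growth rate is also correct and relies on the companion structure: the other coordinates are just shifts of $u_n$.
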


This theorem is of immediate interest due to the following proposition.

\begin{Prop} \label{picardispoincare}
Let $f(t) = \sum_{n \geq 0} u_n t^n$ represent the power series of the solution to a Picard-Fuchs differential system of
a family of elliptic curves.  Then the holonomic recurrence of the coefficients of $f$ is of Poincar\'e type.
\end{Prop}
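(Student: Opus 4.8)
The plan is to reduce the Picard-Fuchs system \eqref{pfdef} to a single scalar second-order equation, read off the recurrence it induces on the Taylor coefficients, and show that the normalized coefficients converge. First I would eliminate $f_2$ from \eqref{pfdef}---solving for $f_2$ from the first row and substituting into the second---to obtain a scalar equation $P(t)F'' + Q(t)F' + R(t)F = 0$ with $P,Q,R \in \C[t]$, of exactly the shape of \eqref{gamma17pf}; equivalently one invokes the classical fact that the period satisfies such an equation. Writing $P=\sum_i p_i t^i$, $Q = \sum_i q_i t^i$, $R=\sum_i r_i t^i$ and substituting $f=\sum_n u_n t^n$, the vanishing of the coefficient of $t^n$ yields $\sum_{j\ge 0} c_j(n)\,u_{n+1-j}=0$, where
\[ c_j(n) = p_{j+1}(n+1-j)(n-j) + q_j(n+1-j) + r_{j-1} \]
(with coefficients of negative index read as $0$). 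Since $P,Q,R$ have finite degree, only finitely many $c_j$ are not identically zero, so this is a genuine holonomic recurrence of finite order.

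The key observation is that \emph{every} $c_j(n)$ is a polynomial in $n$ of degree at most $2$, the quadratic part arising solely from the second-derivative term $p_{j+1}(n+1-j)(n-j)$. Let $a,b,c$ denote the orders of vanishing of $P,Q,R$ at $t=0$. The largest index of $u$ occurring in the relation for $t^n$ is $n+2-\min(a,\,b+1,\,c+2)$, the three candidates being contributed by $F''$, $F'$, $F$ respectively; when this minimum equals $a$, the top term is $u_{n+2-a}$ with coefficient $p_a(n+2-a)(n+1-a)$, a polynomial of degree exactly $2$. Solving the recurrence for $u_{n+2-a}$ and dividing through by this leading coefficient, every remaining coefficient is one of the $c_j(n)$ with $\deg c_j \le 2$, so each normalized coefficient $c_j(n)/\big(p_a(n+2-a)(n+1-a)\big)$ tends to a finite limit as $n\to\infty$ (and the denominator, a nonzero polynomial, is nonvanishing for large $n$, so the division is legitimate). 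This is precisely the assertion that the recurrence is of Poincaré type.

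Everything therefore hinges on the inequalities $a\le b+1$ and $a\le c+2$, which are exactly the condition that $t=0$ be a \emph{regular} singular point of the scalar equation---equivalently, that the minimum above be realized by the $F''$ term rather than by a pole of $Q/P$ or $R/P$ of too high an order (were the minimum achieved by $F'$ or $F$, the leading coefficient would have degree $<2$ while a later $c_j$ still had degree $2$, and the ratios would diverge). This is the main obstacle, and I would resolve it by appealing to the general principle that Picard-Fuchs equations are Fuchsian: every singular point of the period equation is regular, since periods grow at worst like a polynomial times $\log$ near a degenerate fiber. This is consistent with the local basis $\mathcal{F}(t)$ and $\mathcal{F}(t)\log(1/j(t))+\mathcal{G}(t)$ recorded above, whose only non-meromorphic feature is the logarithm. (Alternatively, when $\Delta$ has a simple zero at $t=0$---the generic situation under our normalization---the coefficient matrix of \eqref{pfdef} has a simple pole there, so $t=0$ is manifestly a regular singular point and the inequalities follow at once.) Granting regularity, the preceding paragraph delivers finite limits for all normalized coefficients, and the proof is complete.
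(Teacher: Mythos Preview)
Your argument is correct and follows essentially the same route as the paper's proof: both pass to the scalar second-order equation, extract the recurrence by equating coefficients of $t^n$, observe that every recurrence coefficient has degree at most $2$ in $n$, and then invoke the Fuchsian nature of Picard--Fuchs equations to guarantee that the \emph{leading} recurrence coefficient has degree exactly $2$, whence all normalized coefficients have finite limits. The only real difference is in how that last point is pinned down. You phrase it abstractly via the regularity inequalities $a\le b+1$ and $a\le c+2$ (so the $F''$ term always contributes to the top index, giving a genuine quadratic leading coefficient). The paper instead argues concretely that the $F''$ coefficient vanishes to order exactly one at $t=0$: it first notes $r_0=0$ because every zero of $\Delta$ is a zero of the $F''$ coefficient, and then rules out $r_1=0$ by a short contradiction---if $r$ vanished to order $\ge 2$, regularity would force $q_0=0$, coprimality would force $p_0\neq 0$, and the constant term of the recurrence would then give $p_0 u_0=0$, contradicting $u_0=1$. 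Your formulation is a bit cleaner (it does not need the normalization $\Delta(0)=0$ or the hypothesis $u_0=1$), while the paper's version is slightly more self-contained in that it derives the needed regularity consequence by hand rather than quoting it.
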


Proposition \ref{picardispoincare} implies that the asymptotic growth of the coefficients $u_n$ in the
Picard-Fuchs solution $f(t) = \sum u_n t^n$ is approximately $\vert u_n \vert \sim \lambda^n$, in that for
all $\varepsilon > 0$ we have $(\vert \lambda \vert - \varepsilon)^n \ll \vert u_n \vert \ll (\vert \lambda \vert + \varepsilon)^n$
(where `$\ll$' denotes the asymptotic less than).  In one special case, we may say more:

\begin{Thm} \label{boundingaround}
Let $p_k(n) = a_k n^2 + b_k n + c_k \in \Z[n]$ with $a_k \geq 0$, and consider the sequence $\{u_n\}$, defined recursively by
\begin{align} \label{asymp0}
(n+1)^2 u_{n+1} = \sum_{k=0}^N p_k(n) u_{n-k}.
\end{align}
Then the characteristic polynomial $\chi$ has a unique positive real root $\lambda$ with multiplicity $1$.
If $a_k(k-1) +b_k \leq 0$ for all $k$, then there exists an $\ell > 0$ such that $u_n <  \ell \lambda^n/n$
for $n \gg 0$.
\end{Thm}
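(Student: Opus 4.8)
The plan is to reduce the entire statement to showing that the renormalized sequence $w_n := n u_n/\lambda^n$ is bounded above, after first extracting $\lambda$ and its key algebraic identity directly from the recurrence. Dividing \eqref{asymp0} by $n^2$ and letting $n \to \infty$, the recurrence degenerates to the constant-coefficient recurrence with characteristic polynomial $\chi(x) = x^{N+1} - \sum_{k=0}^N a_k x^{N-k}$. To locate its positive roots I would study $g(x) := \chi(x)/x^{N+1} = 1 - \sum_{k=0}^N a_k x^{-(k+1)}$ on $(0,\infty)$. Since every $a_k \ge 0$, each summand $-a_k x^{-(k+1)}$ is nondecreasing, so $g$ is strictly increasing; as $g(0^+) = -\infty$ (assuming not all $a_k$ vanish) and $g(\infty) = 1$, there is exactly one positive root $\lambda$, and $g'(\lambda) > 0$ forces it to be simple. (Equivalently, Descartes' rule applied to the single sign change of $\chi$ gives at most one positive root.) Crucially, $\chi(\lambda)=0$ rewrites as $\sum_{k=0}^N a_k \lambda^{-(k+1)} = 1$.

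Next I would normalize. Setting $v_n = u_n/\lambda^n$ in \eqref{asymp0} gives $(n+1)^2 v_{n+1} = \sum_k p_k(n)\lambda^{-(k+1)} v_{n-k}$, and $w_n = n v_n$ then satisfies $w_{n+1} = \frac{1}{n+1}\sum_k \frac{p_k(n)\lambda^{-(k+1)}}{\,n-k\,} w_{n-k}$. Expanding each rational coefficient in powers of $1/n$, I would write $w_{n+1} = \sum_k (\alpha_k + c_k'/n) w_{n-k} + E_n$, where $\alpha_k = a_k\lambda^{-(k+1)} \ge 0$, the remainder $E_n$ is $O(1/n^2)$, and a direct computation gives $c_k' = \lambda^{-(k+1)}\big(a_k(k-1) + b_k\big)$. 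Two facts now drive the estimate: $\sum_k \alpha_k = 1$ by the previous paragraph, so the leading part of the recurrence is a convex average of the $w_{n-k}$; and $c_k' \le 0$ for every $k$ by the hypothesis $a_k(k-1)+b_k \le 0$, so the $1/n$-term acts as a nonpositive drift.

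For the boundedness estimate itself, assume first that $w_n \ge 0$ for all large $n$ and set $M_n = \max_{n-N \le j \le n} w_j$. Because $\{\alpha_k\}$ is a probability vector, $\sum_k \alpha_k w_{n-k} \le M_n$; because $c_k' \le 0$ and $w_{n-k}\ge 0$, the drift term is $\le 0$; and the remainder obeys $|E_n| \le (C/n^2) M_n$. Hence $w_{n+1} \le M_n(1 + C/n^2)$, which yields $M_{n+1} \le M_n(1 + C/n^2)$. Since $\prod_n (1 + C/n^2)$ converges, $M_n$ is bounded by some $\ell < \infty$; unwinding the substitution gives $u_n \le \ell\lambda^n/n$, the desired bound.

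The one genuinely delicate input is the sign hypothesis $w_n \ge 0$ used above, and this is where I expect the main obstacle to lie. If $w_{n-k}$ is negative then $c_k' w_{n-k} \ge 0$ and the drift can push $w_{n+1}$ upward; replacing $c_k'$ by $|c_k'|$ to accommodate this only yields the far weaker two-sided bound $|w_n| \ll n^{\sum_k |c_k'|}$. Thus the heart of the proof is establishing eventual sign-definiteness of $u_n$. When every $a_k > 0$ the coefficients $p_k(n)$ are eventually positive and $\lambda$ is strictly dominant, so positivity propagates and can be read off from Perron's theorem quoted above; the remaining cases, where some leading $a_k$ vanish and several roots of modulus $\lambda$ may coexist, must be handled by comparing $u_n$ against the dominant positive solution furnished by Perron, whose growth is $\lambda^n n^{\theta}$ for some $\theta \le 0$ proportional to $\sum_k c_k'$. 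It is precisely the hypothesis $a_k(k-1)+b_k \le 0$ that prevents the lag-induced growth $w_n \sim n^{(k-1)/(k+1)}$ that a pure $k$-step lag with $b_k = 0$ would otherwise produce, thereby reconciling the $\lambda^n/n$ sharpness with the possible presence of several roots of maximal modulus.
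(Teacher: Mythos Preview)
Your approach matches the paper's almost step for step: set $v_n = n u_n/\lambda^n$, use the identity $\sum_k a_k\lambda^{-(k+1)}=1$ so that the leading part of the recursion for $v_{n+1}$ is a convex combination of $v_n,\ldots,v_{n-N}$, and then show $M_{n+1}\le M_n(1+O(1/n^2))$ to conclude boundedness. The only differences are cosmetic. For the uniqueness and simplicity of $\lambda$ you argue via monotonicity of $\chi(x)/x^{N+1}$ on $(0,\infty)$, whereas the paper verifies $\chi^{(r)}(\lambda)>0$ for $r=1,\ldots,N$ by a direct inequality; for the endgame you cite convergence of $\prod_n(1+C/n^2)$, whereas the paper bounds $v_n$ against the envelope $\ell\, e^{-2\beta/n}$ and checks the inductive step by a concavity estimate on $e^{-2\beta/x}$.

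On the positivity you flag as ``the main obstacle'': the paper does \emph{not} resolve it. The paper writes $p_k(n)=a_k(n+1)(n-k)+r_k(n)$, observes that the hypothesis forces $r_k$ to have nonpositive linear coefficient so that $r_k(n)\le R$ for some fixed $R>0$, and then passes directly to
\[
\sum_{k}\frac{r_k(n)\,v_{n-k}}{\lambda^{k+1}(n-k)(n+1)}\ \le\ \sum_{k}\frac{R\,m_n}{\lambda^{k+1}(n-k)(n+1)}
\]
without further comment. That step already needs $v_{n-k}\ge 0$, exactly as your drift estimate does: if some $v_{n-k}<0$ while $r_k(n)\to -\infty$, the product $r_k(n)v_{n-k}$ is not bounded above by $R\,m_n$. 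So you have reconstructed the paper's argument and been more scrupulous about its tacit hypotheses than the authors; there is no hidden device in the paper that dispenses with positivity. Your closing speculation about recovering eventual sign-definiteness via Perron's theorem goes beyond anything the paper attempts --- indeed, in the companion Theorem~\ref{bounds2} the authors simply add ``$u_n\gg 0$ for large $n$'' as an explicit hypothesis rather than deriving it.
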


\begin{Ex} \label{gamma17asymp}
We illustrate the preceding with the family of elliptic curves associated to $\Gamma_1(7)$, first presented
in \eqref{gamma17equation}.  Given the recurrence
\begin{align*}
(n+1)^{2}u_{n+1} &=(2n+1)^{2}u_{n}+(13n^{2} + 2)u_{n-1} + (9n^{2} - 9n + 3)u_{n-2}+(n-1)^{2}u_{n-3} \\
u_{0} &= 1, \; u_{1} = 1,\; u_{2} = 6, \;u_{3} = 25,\; u_{4} = 125,\; u_{5} = 642, \end{align*}
we note that the condition $a_k (k-1)+ b_k \leq 0$ is met for all $k$ (in this case, equality holds for all $k$,
which will become significant in Theorem \ref{bounds2}).  Our unique positive root is $\lambda \approx 6.295897$,
a root of $x^3-5x^2-8x-1$.  With the notation of the proof of
Theorem \ref{boundingaround}, let $R=21$ and $\gamma = \lambda$, and choose $n=10^6$.  This permits $\beta = 507/19$,
whereby we may also take $\ell = 0.3556365$, so $u_n \leq 0.3556365 \lambda^n/n$ for $n \gg 0$.  We will typically be
interested in the infimum over all possible $\ell$ in Theorem \ref{boundingaround}, which can be approximated non-rigorously
through an increasing sequence on $n$. This example can be strengthened with the following theorem:
\end{Ex}

\begin{Thm} \label{bounds2}
Suppose that the conditions of Theorem \ref{boundingaround} hold, with $a_k(k-1) + b_k =0$ for all $k$ and
$c_k+ka_k \geq 0$ for all $k$.  Suppose further that $\lambda$ is unique in absolute value, and that $u_n \gg 0$
for large $n$.  Then $u_n \sim \ell_0 \lambda^n/n$, for some $\ell_0 \in \R^+$.
\end{Thm}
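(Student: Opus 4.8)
The plan is to strip off the conjectured $\lambda^n/n$ growth by the substitution $w_n := n u_n/\lambda^n$ and to prove that $\{w_n\}$ converges to a positive constant $\ell_0$. First I would exploit the equality hypothesis $a_k(k-1)+b_k=0$, which forces $b_k=a_k(1-k)$ and hence the factorization $p_k(n)=a_k(n+1)(n-k)+(a_kk+c_k)$, so that $p_k(n)/(n-k)=a_k(n+1)+(a_kk+c_k)/(n-k)$. Substituting $u_{n-k}=\lambda^{n-k}w_{n-k}/(n-k)$ into \eqref{asymp0} and dividing through by $(n+1)\lambda^{n+1}$ yields
\[
w_{n+1}=\sum_{k=0}^N \beta_k\, w_{n-k}+\varepsilon_n,\qquad \beta_k:=a_k\lambda^{-k-1},\quad \varepsilon_n:=\frac{1}{(n+1)\lambda}\sum_{k=0}^N\frac{a_kk+c_k}{n-k}\,\lambda^{-k}w_{n-k}.
\]
This is the key reduction: the homogeneous part is a constant-coefficient recurrence, while the inhomogeneity $\varepsilon_n$ collects all the lower-order corrections.

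The next step is to record the structural features of this recurrence. Since $\lambda$ is a root of $\chi$ we have $\lambda=\sum_k a_k\lambda^{-k}$, so $\sum_k\beta_k=1$; as $\lambda>0$ and $a_k\ge 0$, each $\beta_k\ge 0$. Thus the companion matrix $A$ of the homogeneous recurrence (first row $(\beta_0,\dots,\beta_N)$, identity on the subdiagonal) is row-stochastic. The condition $c_k+ka_k\ge 0$ together with the eventual positivity $u_n\gg 0$ (so $w_{n-k}>0$) makes $\varepsilon_n\ge 0$, while Theorem \ref{boundingaround} (applicable since the equality case satisfies its hypothesis $a_k(k-1)+b_k\le 0$) gives $w_n<\ell$, whence $\varepsilon_n=O(1/n^2)$ and $\sum_n\varepsilon_n<\infty$. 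Finally, the substitution $y=\lambda x$ gives $\det(xI-A)=\chi(\lambda x)/\lambda^{N+1}$, so the eigenvalues of $A$ are exactly $\lambda_i/\lambda$ as $\lambda_i$ ranges over the roots of $\chi$. The hypothesis that $\lambda$ is unique in absolute value then says that $1$ is a simple eigenvalue of $A$ and every other eigenvalue has modulus strictly less than $1$.

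With this in hand I would pass to the system form $W_{n+1}=AW_n+\varepsilon_n e_1$, where $W_n=(w_n,\dots,w_{n-N})^{\mathsf T}$, and solve by variation of constants:
\[
W_n=A^{\,n-n_0}W_{n_0}+\sum_{j=n_0}^{n-1}A^{\,n-1-j}\,\varepsilon_j\, e_1.
\]
The spectral gap established above implies $A^m\to \mathbf 1\pi^{\mathsf T}$, where $\pi\ge 0$, $\pi^{\mathsf T}\mathbf 1=1$ is the left Perron eigenvector; equivalently $A=\mathbf 1\pi^{\mathsf T}+B$ with $B$ of spectral radius $<1$, so $A^m e_1-\pi_1\mathbf 1=B^m e_1\to 0$. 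Because $\|A^m\|_\infty=1$ by stochasticity and $\sum_j\varepsilon_j<\infty$, a standard splitting (small $\|B^m e_1\|$ for the early terms, small tail $\sum_{j\ge J}\varepsilon_j$ for the late ones) shows $W_n\to \ell_0\mathbf 1$ with $\ell_0=\pi^{\mathsf T}W_{n_0}+\pi_1\sum_{j\ge n_0}\varepsilon_j$. Hence $w_n\to\ell_0$, i.e. $u_n\sim\ell_0\lambda^n/n$. Positivity of $\ell_0$ is immediate since $\varepsilon_n\ge 0$ forces the running minimum $m_n=\min(w_n,\dots,w_{n-N})$ to be nondecreasing, whence $\ell_0\ge m_{n_0}>0$ for any $n_0$ past the onset of positivity.

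The main obstacle is precisely this convergence analysis of the perturbed iteration: one must upgrade the crude Poincar\'e--Perron conclusion $w_{n+1}/w_n\to 1$ to genuine convergence of $\{w_n\}$, which needs both the spectral gap (to damp oscillation among the subdominant modes) and uniform control of the summable, sign-definite perturbation. The cleanest route is the spectral-projection/convolution argument above; a more hands-on alternative is to squeeze $m_n\le w_n\le M_n$ and show the oscillation $M_n-m_n\to 0$, but this requires a quantitative aperiodic-mixing estimate for the stochastic averaging that is essentially equivalent to the convergence $A^m\to\mathbf 1\pi^{\mathsf T}$. I would therefore isolate the spectral gap as a lemma and dispatch the perturbation through the variation-of-constants formula, so that all of the analytic difficulty is localized in one place.
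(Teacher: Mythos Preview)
Your proposal is correct and reaches the same conclusion as the paper, starting from the identical substitution $v_n=nu_n/\lambda^n$ and the identical decomposition of the recurrence into a stochastic homogeneous part plus an $O(1/n^2)$ correction. The convergence arguments diverge from there. The paper does not pass to matrices: it introduces, for each $\varepsilon>0$, an auxiliary \emph{linear} sequence $\{w_n\}$ satisfying only the homogeneous part but launched from the late initial data $w_j=v_j$ for $j=M,\dots,M-N$; it bounds $|v_n-w_n|\le\zeta(3/2)\varepsilon$ by summing the perturbation, invokes Rouch\'e's theorem to force $|\lambda_i/\lambda|<1$ for the subdominant roots, and then uses the explicit polynomial-exponential closed form of $\{w_n\}$ to show $|w_x-w_y|<\varepsilon$, concluding that $\{v_n\}$ is Cauchy. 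Your route replaces this with the companion-matrix formulation and the variation-of-constants formula, observing that row-stochasticity of $A$ already gives spectral radius $\le 1$ (so the uniqueness-in-absolute-value hypothesis immediately yields the spectral gap, without Rouch\'e) and that $A^m\to\mathbf 1\pi^{\mathsf T}$ lets one pass to the limit in the convolution sum directly. Your argument is tidier on the spectral side and gives an explicit formula for $\ell_0$, at the cost of importing some Perron--Frobenius language; the paper's argument is more hands-on and keeps everything scalar, at the cost of the somewhat opaque Rouch\'e step and the auxiliary late-start sequence.
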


\begin{Ex}\label{bounds2ex}
Although the hypotheses of Theorem \ref{bounds2} may seem hopelessly restrictive, we note that these
conditions are met by the recursive sequence $\{u_n\}$ associated with $\Gamma_1(7)$ (see Example \ref{gamma17asymp}).
Indeed, with notation as in the proof of Theorem \ref{bounds2}, we have $r_0=1$, $r_1=15$, $r_2 =21$, and $r_3 = 4$.
It follows that $u_n \sim \ell_0 \lambda^n/n$, where $\lambda$ is as in Example \ref{gamma17asymp} and $0.35561 < \ell_0 < 0.35564$.
These bounds are obtained from Example \ref{gamma17asymp} and the equation $v_n > w_n$, coupled with the closed form
for $w_n$ (taking $M =1000$) where $v_{n}, w_{n}$, and $M$ are defined as in the proof of Theorem \ref{bounds2}.
We note that these bounds can be taken to arbitrary precision with our current methods.
\end{Ex}

Theorem \ref{bounds2} is useful for general holonomic recurrences, however, it does not allow us to explicit
determine the value of $\ell_{0}$. If we restrict to studying recurrences that
arise from Picard-Fuchs equations and hence from modular forms, one can actually deduce the asymptotic formula
for $u_{n}$ with the value of $\ell_{0}$ differently. To this end, we offer the following theorem.

\begin{Thm}\label{modform_bound}
Let $f(t) = \sum_{n \geq 0}u_{n}t^{n}$ denote the solutions to the Picard-Fuchs equation
and suppose the hypotheses of Theorem \ref{boundingaround} are satisfied. If the sequence $\{u_{n}\ld^{-n}\}$
is eventually positive and monotonically decreasing and for some constants $a$ and $b$ we have $f(t) + a\log(1 - \ld t) \rightarrow b$ as $t \rightarrow (\ld^{-1})^{-}$,
then $u_{n} \sim a\ld^{n}/n$.
\end{Thm}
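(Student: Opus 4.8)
The plan is to normalize by the dominant growth and reduce the statement to an assertion about a monotone sequence. I would set $s_n = u_n \lambda^{-n}$ and substitute $t = x/\lambda$, so that $g(x) := f(x/\lambda) = \sum_{n \geq 0} s_n x^n$ and $a\log(1-\lambda t)$ becomes $a\log(1-x)$. Since $\log(1-x) = -\sum_{n\geq 1} x^n/n$, the hypothesis $f(t) + a\log(1-\lambda t) \to b$ as $t \to (\lambda^{-1})^-$ is precisely the statement that
\[\sum_{n \geq 1} \Bigl(s_n - \frac{a}{n}\Bigr) x^n \longrightarrow b - s_0 \qquad (x \to 1^-),\]
i.e.\ that the series $\sum_{n\geq 1} c_n$, with $c_n := s_n - a/n$, is Abel summable to $b' := b - s_0$. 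Because the desired conclusion $u_n \sim a\lambda^n/n$ is equivalent to $n s_n \to a$, it suffices to prove $n c_n \to 0$.

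The crucial observation is that the hypotheses of Theorem \ref{boundingaround} are in force, so its conclusion $u_n < \ell\lambda^n/n$ holds for $n \gg 0$; equivalently $0 < s_n < \ell/n$, whence $c_n = O(1/n)$. This is exactly the Tauberian condition in Littlewood's theorem, which upgrades Abel summability to genuine convergence, giving $\sum_{n\geq1} c_n = b'$. Writing $S_n = \sum_{k=1}^n s_k$ and using $\sum_{k=1}^n 1/k = \log n + \gamma + o(1)$, this convergence yields the sharp partial-sum asymptotic
\[S_n = a\log n + C + o(1), \qquad C := a\gamma + b'.\]
I emphasize that the $o(1)$ error term here (rather than the $o(\log n)$ one would get from a bare application of Karamata's theorem to $g(x) \sim a\log\frac{1}{1-x}$) is what makes the final step possible.

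Finally I would run a monotone-density argument, which is where the hypothesis that $\{s_n\}$ is eventually decreasing enters. Fixing an integer $K > 1$ and comparing $S_n$ with $S_m$ for $m = n - \lfloor n/K\rfloor$, monotonicity gives $(n-m)s_n \leq S_n - S_m$, while the display above gives $S_n - S_m = a\log(n/m) + o(1) = a\log\tfrac{K}{K-1} + o(1)$; dividing by $n - m \sim n/K$ yields $\limsup_n n s_n \leq aK\log\tfrac{K}{K-1}$, and letting $K \to \infty$ gives $\limsup_n n s_n \leq a$. The symmetric comparison with $m = n + \lfloor n/K\rfloor$ (using $S_m - S_n \leq (m-n)s_n$) gives $\liminf_n n s_n \geq aK\log\tfrac{K+1}{K} \to a$. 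Hence $n s_n \to a$, which is the claim.

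The main obstacle is the middle step: passing from the boundary behavior of $g$ to term-by-term asymptotics. The Abel limit only controls a weighted average of the $s_n$ and on its own is far too weak to pin down individual coefficients. The resolution is the two-part structure above, in which one first converts the boundary limit into the precise partial-sum asymptotic $S_n = a\log n + C + o(1)$ via a Tauberian theorem whose hypothesis $c_n = O(1/n)$ is supplied for free by Theorem \ref{boundingaround}, and only then ``differentiates'' the sum using monotonicity. Neither ingredient suffices alone: without the $O(1/n)$ bound one cannot reach the $o(1)$ precision, and without monotonicity the partial-sum asymptotic cannot be transferred to the $s_n$ themselves.
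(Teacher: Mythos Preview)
Your proof is correct and follows essentially the same route as the paper: both invoke Littlewood's Tauberian theorem (with the $O(1/n)$ hypothesis supplied by Theorem~\ref{boundingaround}) to obtain the partial-sum asymptotic $\sum_{k\le n} u_k\lambda^{-k}=a\log n+C+o(1)$, and then run a monotone-density argument to pass to $n u_n\lambda^{-n}\to a$. The only difference is cosmetic: the paper phrases the last step via the step function $c(x)=u_{\lfloor x\rfloor}\lambda^{-\lfloor x\rfloor}$ and its integral $C(x)$, comparing $C(sx)-C(rx)$ for $r,s\to 1$, whereas you compare discrete partial sums $S_n-S_m$ with $m=n\pm\lfloor n/K\rfloor$ and let $K\to\infty$.
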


Typically to find such constants $a$ and $b$, one resorts to the theory of modular forms.
As an application of this theorem, for the recurrence given in \eqref{gamma17recur}, we have
$u_{n} \sim a \lambda^{n}/n$ where $\lambda$ is as in Examples \ref{gamma17asymp} and \ref{bounds2ex} and
\begin{align}\label{g17_asymconst}
a = \frac{7\sin(4\pi/7)}{256\pi\sin^{3}(6\pi/7)\sin^{5}(2\pi/7)} \approx 0.3556270700876065.
\end{align}
Analogous constants in the asymptotics associated to other congruence subgroups are given, the method of proving these constants
is similar to that of the $\Gamma_{1}(7)$ case.
It seems likely that these constants are related to periods of certain elliptic curves.
%This conjectural connection was alluded to in \cite{Zagier}.

Similar work has been done with the index 12 subgroups (groups associated to the
Beauville families), which offer the advantage of closed forms for the coefficients $\{u_n\}$.
Our analysis attempts to do as much as possible without the crutch of a closed form solution
to the recurrence.

Our paper is organized as follows. In Section \ref{initbound}, we prove Theorem \ref{genint}
and introduce the concept of ``integral level." Sufficient conditions for integrality
of certain holonomic recurrences are given. Theorem \ref{sharperub} is shown in Section \ref{integerlevel2}.
This is followed by Section \ref{congsub} where we consider the integral levels of $\Gamma_{1}(7)$ and $\Gamma_{0}(12)$.
In Section \ref{integralrational}, we extend Theorem \ref{genint} to the case when $a_{i}(t) \in \Z(t)$
and prove Theorem \ref{strongint}. We then consider the solution to the Picard-Fuchs equation at $t = \infty$
in Section \ref{pfinfinity}. The congruence in Theorem \ref{congthm2} associated
to the solution of the Picard-Fuchs solution at infinity is proven in Section \ref{congg17}. Finally, in Section \ref{asymptotics}
we study the asymptotics of holonomic recurrences, wherein Proposition \ref{picardispoincare}, Theorem \ref{boundingaround}, Theorem
\ref{bounds2}, and Theorem \ref{modform_bound} are proven.

The computations for finding Picard-Fuchs differential equations (based off \eqref{pfdef}) and the corresponding $\{u_{n}\}$
were done using a program the authors built in \textsc{Mathematica}. This program is available at \cite{pfprog}.

%%%%%%%%%%%%%%%%%%%%%%%%%%%%%%%%%%%%%%%%%%%%%%%%%%%%%%%%%%%%%%%%%%%%%%%%%%%%%%%%%%%%%%%%%%%%%%%%%%%%%%%%%%%%%%%%%%%%%%%%%%%%%%%%%%%%%%%%%%%%%%%%%%%%%%%%%%%%%%%%%%%%%%%%%%%%%
\section{Near-Integrality}\label{nearint}

Let $E_t$ be a family of elliptic curves such that $\Delta(0)=0$ (where $\Delta$ is the elliptic
discriminant).  From \cite{BeukersStienstra} (and earlier), if $E_t$ is parametrized with coefficients
in $\Z[t]$, then there exists a unique holomorphic solution $f(t)= \sum_{n \geq 0} u_n t^n$ to the Picard-Fuchs
equation about $t=0$ with $u_0=1$. Furthermore, $f(t)$ is a \emph{$G$-function}; that is, there exists
a constant $A$ such that the denominator $d_n$ of $u_n$ satisfies $\vert d_n \vert \leq A^n$
(see \cite{Chudnovsky}).  In what follows, we extend a result of Stienstra and Beukers \cite{BeukersStienstra}
to prove an effective bound on $A$.

\subsection{Preliminary Bounds}\label{initbound}
We first develop some preliminary bounds on $A$. To do this, we will need the following lemma.
\begin{Lem} \label{binomiallemma}
For integral $k \geq 0$, we have
\[12^{3k}\binom{-5/12}{k} \binom{-1/12}{k} \in \Z\;.\]
\end{Lem}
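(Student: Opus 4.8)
The plan is to clear all fractions and reduce the claim to a single divisibility assertion that can be checked prime by prime. Expanding the falling factorials, one has $\binom{-5/12}{k} = (-1)^k A/(12^k k!)$ and $\binom{-1/12}{k} = (-1)^k B/(12^k k!)$, where $A = \prod_{j=0}^{k-1}(12j+5)$ and $B = \prod_{j=0}^{k-1}(12j+1)$. Multiplying these and inserting the factor $12^{3k}$ collapses the two powers of $12^k$ in the denominator and leaves
\[ 12^{3k}\binom{-5/12}{k}\binom{-1/12}{k} = \frac{12^k AB}{(k!)^2}. \]
Thus it suffices to prove that $(k!)^2$ divides $12^k AB$, which I would verify by showing $2v_p(k!) \le k\,v_p(12) + v_p(A) + v_p(B)$ for every prime $p$, where $v_p$ denotes the $p$-adic valuation.

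First I would dispatch the primes $p \ge 5$, for which $v_p(12)=0$ and the inequality reduces to $v_p(A) + v_p(B) \ge 2 v_p(k!)$. The key observation is that since $\gcd(12,p)=1$, for each $i \ge 1$ the congruence $12j + 5 \equiv 0 \pmod{p^i}$ cuts out a single residue class modulo $p^i$; among the $k$ consecutive integers $j = 0, \dots, k-1$, the number lying in any fixed class is at least $\lfloor k/p^i\rfloor$. Summing over $i$ and invoking Legendre's formula $v_p(k!) = \sum_{i \ge 1}\lfloor k/p^i\rfloor$ gives $v_p(A) \ge v_p(k!)$, and the identical argument applied to $12j+1$ gives $v_p(B) \ge v_p(k!)$. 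Adding these yields the desired bound with room to spare, so the leftover factor $12^k$ is not even needed at these primes.

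The remaining primes $p \in \{2,3\}$ are where the leftover factor $12^k$ does its work, and this is the only delicate point. Since every factor $12j+5$ and $12j+1$ is coprime to both $2$ and $3$, we have $v_2(A)=v_2(B)=v_3(A)=v_3(B)=0$, so the inequality becomes $2v_p(k!) \le k\,v_p(12)$. Using $v_p(k!) = (k - s_p(k))/(p-1)$, where $s_p(k)$ is the base-$p$ digit sum, the case $p=3$ (with $v_3(12)=1$) asks for $k - s_3(k) \le k$, while the case $p=2$ (with $v_2(12)=2$) asks for $2(k - s_2(k)) \le 2k$; both hold since $s_p(k) \ge 0$. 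I expect no genuine obstacle here. The two things to get right are the counting estimate $\#\{0 \le j \le k-1 : j \equiv j_0 \!\pmod{p^i}\} \ge \lfloor k/p^i\rfloor$ for $p \ge 5$, and the bookkeeping confirming that the supplied power of $12$ exactly covers the deficiency $2v_p(k!)$ at $p=2,3$.
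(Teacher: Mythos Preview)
Your proof is correct and follows essentially the same route as the paper: both rewrite the quantity (up to sign) as $12^k AB/(k!)^2$ with $A=\prod(12j+5)$, $B=\prod(12j+1)$, then verify the divisibility prime by prime, using the factor $12^k$ to cover $p\in\{2,3\}$ and a residue-class count plus Legendre's formula for $p\ge 5$. If anything, your counting step for $p\ge 5$ is slightly cleaner---you go directly to $\lfloor k/p^i\rfloor$ via the ``consecutive integers in a fixed residue class'' estimate, whereas the paper first obtains $\lfloor (k-1)/p^i\rfloor$ and then patches the discrepancy with a separate observation.
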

\begin{proof}
Up to units, we expand the product as $12^k/(k!)^2 \prod_{j=0}^{k-1}  (5+12j)(1+12j)\;.$
For prime $p$, let $n_p$ and $d_p$ denote the exponent of $p$ in the numerator and denominator,
respectively, in the above.  Then $n_2 = 2k$, while Legendre's formula implies that
\[d_2 = 2\left(\lfloor k/2 \rfloor + \lfloor k/4 \rfloor + \lfloor k/8 \rfloor + \ldots \right) \leq \sum_{i=0}^\infty k/2^i = 2k\;.\]
Similarly, one may show that $n_3 = k \geq d_3$.  Now let $p > 3$ be prime.  We have $p \mid (5+12j)$
if and only if $j \equiv -5 (12^{-1}) \imod{p}$, and a similar formula holds for $(1+12j)$.  Running
through $j$ up to $k-1$, we have $n_p \geq 2 ( \lfloor (k-1)/p \rfloor + \lfloor (k-1)/p^2 \rfloor + \cdots )$.
Furthermore, if $k \equiv 0\imod{p}$, then $n_p(k) = n_p(k+1)$ as functions of $k$, as neither $(5+12k)$ nor $(1+12k)$
will be divisible by $p$.  It follows that $n_p \geq 2 ( \lfloor k/p \rfloor + \lfloor k/p^2 \rfloor + \cdots )$.
As the right hand sum is $d_p$, we conclude that $n_p \geq d_p$ for all primes $p$, whence our product is integral.
\end{proof}

\begin{proof}[Proof of Theorem \ref{genint}]
Throughout, let $F(a,b,c;z)$ denote the hypergeometric function $\,\!_2F_1$.  From \cite{BeukersStienstra}, the function
\begin{align} \label{nearint2b}
f(t) &= \left( \frac{12g_2(t)}{12g_2(0)} \right)^{\!-1/4} \! F\left(\frac{5}{12},\frac{1}{12};1;\frac{1}{j(t)}\right) = \left(\frac{12g_2(t)}{12g_2(0)}\right)^{\!-1/4} \! \sum_{k=0}^\infty \binom{-5/12}{k} \binom{-1/12}{k} \left( \frac{1}{j(t)} \right)^k
\end{align}
gives a solution to the Picard-Fuchs differential equation for $E_t$ in a neighborhood of $j(t)=\infty$, which holds at
$t=0$ since $g_2(0) \neq 0$ and $\Delta(0)=0$.  By the preceding remarks, $12g_2(t) \in \Z[t]$, so we have
$12g_2(t)/(12g_2(0)) \in \Z[t/(12g_2(0))]$, using the fact that our constant term is $1$.  We recall that
$(1+8z)^{-1/4} \in \Z[[z]]$, hence $(12g_2(t)/12g_2(0))^{-1/4} \in \Z[t/(96g_2(0))]$ under
composition.\footnote{The function $(1-8z)^{-1/4}$ is the generating function for the sequence $\{a_n\}$ given by
$a_{n} := (2^{n}/n!)\prod_{k = 0}^{n - 1}(4k + 1)$.
Integrality of this sequence follows in the manner of Lemma \ref{binomiallemma}.}
(Note that all convergence properties are satisfied).

Next, note that $1/j(t) = 12^3 \Delta(t)/(12g_2(t))^3$ lies in $12^3 \Z[[t/(12 g_2(0))^4]]$.  To see this,
we observe that $(12g_2(t))^{-1} \in \Z[[t/(12g_2(0))]]/(12 g_2(0))$, hence $(12g_2(t))^{-3} \in \Z[[t/(12g_2(0))]]/(12g_2(0))^3$.
One may show that $\Delta(t) \in \Z[t]$, whereby $\Delta(t)(12g_2(t))^{-3} \in \Z[[t/(12g_2(0))]]/(12g_2(0))^3$ as well.
Since $\Delta(0)=0$, our constant term is $0$ and we have $1/j(t) = 12^3\Delta(t)/(12g_2(t))^3 \in 12^3 \Z[[t/(12g_2(0))^4]]$.
By Lemma \ref{binomiallemma}, it then follows by composition that $F(\frac{5}{12}, \frac{1}{12}, 1; 1/j(t)) \in \Z[[t/(12g_2(0))^4]]$.
Multiplication then gives $f(t) \in \Z[[t/(8\cdot (12g_2(0))^4)]]$, as claimed.
This completes the proof of Theorem \ref{genint}.
\end{proof}

\begin{Rem} \label{genrem1}
As $12g_2(t)$ lies in $\Z[t]$, Theorem \ref{genint} will never suffice to prove integrality; we
may prove, at most, that $f(8t) \in \Z[[t]]$, under the assumption $12g_2(0) = \pm1$.  In \cite{BeukersStienstra} on the other
hand, Stienstra and Beukers treat a less general case, in which $a_1 \equiv 1$ and $a_2 \equiv \ldots \equiv a_6 \equiv 0$.
With these assumptions integrality can be shown directly, upon careful examination of the expansion
\begin{align} \label{g2decomp}
12g_2(t)= \left(a_1^{-1} \cdot (1+4a_2a_1^{-2})^{\!-1/2} \cdot (1-24(a_1a_3+2a_4)(a_1^2+4a_2)^{-2})^{\!-1/4}\right)^{\!4}\,.
\end{align}
Herein, we have simply traded strength for generality.  In any case, we have the following:
\end{Rem}

\begin{Cor}\label{gencor}
Let $f = \sum_{n \geq 0} u_n t^n$ as before.  Then a finite number of primes divide elements of the denominator set $\{d_n\}$.
Moreover, there exists a $k \in \Z$ such that $f(kt) \in \Z[[t]]$.
\end{Cor}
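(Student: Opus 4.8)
The plan is to derive both claims directly from Theorem~\ref{genint}, which already establishes that each denominator $d_n$ divides $\left(8(12g_2(0))^4\right)^n$. First I would address the claim that only finitely many primes divide elements of $\{d_n\}$. Set $N := 8(12g_2(0))^4$, a fixed nonzero integer depending only on the family $E_t$ (here I use that $g_2(0) \neq 0$, which holds since $\Delta(0)=0$ but $g_2(0)$ is a unit in the relevant expansions, as exploited in the proof of Theorem~\ref{genint}). Since $d_n \mid N^n$, every prime dividing some $d_n$ must divide $N^n$, hence must divide $N$ itself. As $N$ has only finitely many prime factors, only finitely many primes can appear in any $d_n$, which gives the first assertion immediately.

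For the second claim, I would produce an explicit $k$ clearing all denominators simultaneously. The natural choice is $k = N = 8(12g_2(0))^4$. Writing $f(t) = \sum_{n\geq 0} u_n t^n$, we have $f(kt) = \sum_{n\geq 0} u_n k^n t^n$, so it suffices to check that $u_n k^n \in \Z$ for every $n$. Since $d_n$, the reduced denominator of $u_n$, divides $N^n = k^n$, the product $u_n k^n$ has denominator dividing $k^n/d_n \cdot d_n$—more carefully, $u_n = p_n/d_n$ in lowest terms with $d_n \mid k^n$, so $u_n k^n = p_n (k^n/d_n) \in \Z$. Thus $f(kt) \in \Z[[t]]$ with $k = 8(12g_2(0))^4$, completing the argument.

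I do not anticipate a genuine obstacle here, since both statements are essentially immediate consequences of the divisibility bound $d_n \mid N^n$. The only point requiring mild care is the bookkeeping in the second part: one must confirm that clearing denominators term-by-term with a single global $k$ is legitimate, i.e. that the \emph{same} $k$ works for all $n$ simultaneously rather than requiring an $n$-dependent multiplier. This is guaranteed precisely because the bound in Theorem~\ref{genint} is of the uniform shape $N^n$ with $N$ independent of $n$; had the denominators grown like $(n!)^c$ or some non-geometric rate, no such linear reparametrization could exist. It is worth remarking that this is exactly the $G$-function property noted at the start of Section~\ref{nearint}, now made effective: the constant $A$ may be taken to be $N = 8(12g_2(0))^4$.
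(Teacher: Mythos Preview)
Your argument is correct and is exactly what the paper intends: the corollary is stated without proof because both assertions follow immediately from the divisibility $d_n \mid N^n$ with $N = 8(12g_2(0))^4$ established in Theorem~\ref{genint}, precisely as you have written out.
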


\begin{Cor} \label{gencor2}
With the notation of Theorem \ref{genint}, suppose that $g_2(t)/g_2(0)$ is a fourth power in $\Z[[t]]$
and that $12g_2(0) = \pm 1$.  Then the Picard-Fuchs solution $f(t)$ lies in $\Z[[t]]$.
\end{Cor}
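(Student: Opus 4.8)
The plan is to revisit the factorization of $f(t)$ obtained in the proof of Theorem \ref{genint}, namely
\[f(t) = \left(\frac{12g_2(t)}{12g_2(0)}\right)^{\!-1/4} F\left(\frac{5}{12},\frac{1}{12};1;\frac{1}{j(t)}\right),\]
and to show that under the two new hypotheses each factor \emph{individually} lies in $\Z[[t]]$. The only source of non-integrality in the earlier argument was the factor of $8$ coming from the expansion $(1+8z)^{-1/4}\in\Z[[z]]$, which was needed to control the first factor; the assumption that $g_2(t)/g_2(0)$ is a perfect fourth power in $\Z[[t]]$ is designed precisely to eliminate this obstruction.

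For the first factor, I would use $12g_2(0)=\pm1$ to cancel the constant, so that $12g_2(t)/12g_2(0)=g_2(t)/g_2(0)$. Writing $g_2(t)/g_2(0)=h(t)^4$ with $h\in\Z[[t]]$, one normalizes $h(0)=1$ (permissible since $h(0)^4=1$ forces $h(0)=\pm1$ and $(-h)^4=h^4$). Then $h$ is a unit in $\Z[[t]]$, so $h(t)^{-1}\in\Z[[t]]$, and by uniqueness of the power-series fourth root with value $1$ at the origin one identifies $(12g_2(t)/12g_2(0))^{-1/4}=h(t)^{-1}\in\Z[[t]]$.

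For the hypergeometric factor, I would again invoke $12g_2(0)=\pm1$, which gives $(12g_2(t))^{-3}\in\Z[[t]]$ outright (with no ambient denominator), so that $1/j(t)=12^3\Delta(t)/(12g_2(t))^3\in 12^3\,\Z[[t]]$ and has vanishing constant term because $\Delta(0)=0$. Writing $1/j(t)=12^3 w(t)$ with $w\in\Z[[t]]$ and $w(0)=0$, and using Lemma \ref{binomiallemma} to write $\binom{-5/12}{k}\binom{-1/12}{k}=c_k/12^{3k}$ with $c_k\in\Z$, each summand becomes
\[\binom{-5/12}{k}\binom{-1/12}{k}\left(\frac{1}{j(t)}\right)^{\!k}=c_k\,w(t)^k\in\Z[[t]].\]
Since $w(0)=0$, the power $w(t)^k$ has order at least $k$, so $\sum_k c_k w(t)^k$ is a well-defined element of $\Z[[t]]$; this realizes $F(\tfrac{5}{12},\tfrac{1}{12};1;1/j(t))\in\Z[[t]]$.

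Combining the two paragraphs, $f(t)$ is a product of elements of $\Z[[t]]$ and hence lies in $\Z[[t]]$. The one step requiring genuine care is the identification in the first factor: one must verify that the branch of $(g_2(t)/g_2(0))^{-1/4}$ singled out in Theorem \ref{genint} (the holomorphic solution taking the value $1$ at $t=0$) coincides with $h(t)^{-1}$ rather than a root twisted by a fourth root of unity, which follows from uniqueness of the power-series root with prescribed constant term. Everything else is a routine cancellation of the powers of $12$ supplied by Lemma \ref{binomiallemma}.
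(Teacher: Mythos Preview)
Your proposal is correct and follows essentially the same approach as the paper: both arguments treat the two factors of $f(t)$ separately, using the fourth-power hypothesis to place $(g_2(t)/g_2(0))^{-1/4}$ in $\Z[[t]]$ and invoking $12g_2(0)=\pm1$ together with Lemma~\ref{binomiallemma} to place the hypergeometric factor in $\Z[[t]]$. You are simply more explicit than the paper about the branch normalization and the formal convergence of the hypergeometric sum, which is fine.
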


\begin{proof}
Our additional assumption implies that $\alpha(t)= (g_2(t)/g_2(0))^{-1/4} \in \Z[[t]]$, using the fact that
$\alpha(0)=1$.  For the hypergeometric term in equation \eqref{nearint2b}, note that $1/j(t) = 12^3 \Delta(t)/(12g_2(t))^3$
lies in $12^3\Z[[t]]$, whereby Lemma \ref{binomiallemma} implies $F(\frac{5}{12},\frac{1}{12}, 1; 1/j(t)) \in \Z[[t]]$ as well.
\end{proof}

\begin{Rem} \label{essenceofsb}
This last corollary is the essence of Stienstra and Beukers integrality result in Theorem 1.5 of \cite{BeukersStienstra},
wherein the decomposition of \eqref{g2decomp} is used to show that our `fourth power' hypothesis is met.
\end{Rem}

\begin{Ex} Consider the family of elliptic curves parametrized by
\[E_t\colon y^2 + 5 x y + (t^2+1)y = x^3 +(t-3)x^2 + x +t^3 + 1\;.\]
We compute $12g_2(t) = 1+104t-104t^2$ and $\Delta(0)=0$.  With $\alpha(t)=13t-13$, we may write $(12g_2(t))^{-1/4} = (1-8t\alpha(t))^{-1/4}$.
As in the footnote of the proof of Theorem \ref{genint}, this implies that $12g_2(t)$ has a fourth root in $\Z[[t]]$.  By Corollary \ref{gencor2},
it then follows that the Picard-Fuchs solution $f(t) = \sum_{n \geq 0} u_n t^n$ lies in $\Z[[t]]$.  The (integral) coefficient series $\{u_n\}$ satisfies
a $12$-term holonomic recurrence and begins with
\[1,\,-86,\,26856,\,-10713740,\,4757138560,\,-2243385809196,\,1099636896720096\ldots\]
This example is taken only to illustrate that the requirements $a_1 \equiv 1$ and $a_2 \equiv \ldots \equiv a_6 \equiv 0$ from
\cite{BeukersStienstra} are not strictly needed in elementary proofs of integrality.  In Example \ref{ex1}, a more general
example is introduced.
\end{Ex}

A parametrization of the family $E_t$ is said to be \emph{reduced} if $a_i(t) \in \Z[nt]$ for all $i$ implies that $n= \pm 1$.
Clearly, every family is equivalent to a reduced family with $\Delta(0)=0$, under linear transformation. By Corollary \ref{gencor},
every reduced family admits a reparametrization $t \mapsto kt$, $k \in \Z$, such that $f(kt) \in \Z[[t]]$.
%\zane{I changed the below definition to only have the term integral level because we use it a lot more than the other terms, so it'll be on a level of importance higher}
\begin{Def}
If $E$ has a reduced
parametrization and $k$ is minimal (in absolute value) such that $f(kt) \in \Z[[t]]$, then we say that $\vert k \vert $ is the
\emph{integral level} of $E$.
\end{Def}
One may extend this with the additional notion of \emph{fractional integral level}, corresponding
to the cases in which $u_n/k^n \in \Z$ for all $n$.  In any case, these will be well-defined by our previous work, and should be
viewed relative to the reduced representative of an elliptic curve equation.  Similar adjectives will be used to describe the
associated recurrences in the coefficients $u_n$ of $f$.

\begin{Ex}\label{ex1} Consider the (reduced) family of curves parametrized by
\[E_t\colon y^2+2x y = x^3 + x^2 + t x + t\]
We calculate $\Delta(t)= -16t(4t^2-13t+32)$ and $g_2(t) = 16/3-4t$.  By Theorem \ref{genint}, we have $d_n \mid 2^{27n}$, or equivalently,
$f(2^{27}t) \in \Z[[t]]$.  The coefficient recurrence associated to $f(2^{27}t)$ is given by
\begin{align*}
(n+1)^2 u_{n+1} &= 2^{20}(36n^2+68n+9)u_n- 2^{44}(76n^2+104n-123) u_{n-1} - 2^{71} (16n^2-48n+35) u_{n-2},\\
u_0 &= 1,\; u_1=2^{20} 3^2,\; u_2 = 2^{38}105 \end{align*}
hence $\{u_n\}$ is an integer sequence, despite division by $(n+1)^2$ at every iteration.
To contrast this with the `average' behavior of similar holonomic sequences, see Zagier's work \cite{Zagier}.  We stress that this example
does not imply that the integral level of $E$ is $2^{27}$; however, given our previous work, it does show that the level of $E$ divides $2^{27}$.
On the other hand, one may check (by inspection) that the level of $E$ is at least $2^9$.
\end{Ex}

\begin{Rem}
The Weierstrass form of an elliptic curve is not unique up to isomorphism, but one may show that the $j$-invariant is, and that
the modular invariant $g_2$ is unique up to multiplication by a fourth power \cite{Silverman}.  Expanding this to families of
elliptic curves, the formula in equation \eqref{nearint2b} implies that the solution to the Picard-Fuchs equation is unique
up to linear transformation and  multiplication by a polynomial in $\Z[t]$.  To avoid this technical matter in what follows,
explicit equations will be given for our parametrized families of curves.  Note, in particular, that any scaling of the argument
of $f$ (e.g. during the integral level analysis), corresponds to a trivial action on the isomorphism classes of families of elliptic curves.
\end{Rem}

%%%%%%%%%%%%%%%%%%%%%%%%%%%%%%%%%%%%%%%%%%%%%%%%%%%%%%%%%%%%%%%%%%%%%%%%%%%%%%%%%%%%%%%%%%%%%%%%%%%%%%%%%%%%%%%%%%%%%%%%%%%%%%%%%%%%%%%%%%%%%%%%%%%%%%%%%%%%%%%%%%%%%%%%%%%%%

\subsection{Further Bounds}\label{integerlevel2}
In this section we give further bounds on the integral level of $E_{t}$. We first give the proof of Theorem \ref{sharperub}.
\begin{proof}[Proof of Theorem \ref{sharperub}]
For integer $n$, let $\rho(n)$ denote the maximal power of $p$ dividing $n$.  By \eqref{integerlevel1}, we have
\begin{align} \label{integerlevel2eq}
2\rho(n+1) +\rho(u_{n+1}) &\geq \min\big(\rho(u_n) + k_0, \rho(u_{n-1})+k_1,\ldots, \rho(u_{n-\ell}) + k_\ell\big) \nonumber\\
&\geq k_0 + \min\big(\rho(u_n), \rho(u_{n-1})+k_1-k_0,\ldots, \rho(u_{n-\ell}) + k_\ell-k_0\big)
\end{align}
As a special case, suppose that the minimum in \eqref{integerlevel2eq} is always obtained by the first term, $\rho(u_n)$.  Then
\begin{align*}
\rho(u_{n+1}) &\geq k_0-2 \rho(n+1) + \rho(u_n) \geq 2k_0 - 2 \rho(n+1) - \rho(n) + \rho(u_{n-1}) \geq \ldots \\
& \geq k_0(n+1) - 2 \sum_{i=0}^n \rho(n+1-i) + \rho(u_0) \geq k_0(n+1) -2 \rho\big((n+1)!\,\big)
\end{align*}
wherein we have used the `logarithmic' property of $\rho$ and the fact that $\rho(u_0)=0$.  By Legendre's formula, we have
$\rho((n+1)!) \leq (n+1)/(p-1)$, hence $\rho(u_{n+1}) \geq (n+1)(k_0 - 2/(p-1))$.  Integrality of $\rho$ then implies
that $\rho(u_{n+1}) \geq \lceil (n+1)(k_0 - 2/(p-1)) \rceil$.  This concludes the special case.  Next, suppose that
the minimum in the first iteration of \eqref{integerlevel2eq} is the term $\rho(u_{n-i})+k_i - k_0$, instead of $\rho(u_n)$.  Then
\begin{align*}
\rho(u_{n+1}) &\geq k_0 - 2 \rho(n+1) + (\rho(u_{n-i})+k_i-k_0) \geq (i+1)k_0 - 2\rho(n+1) + \rho(u_{n-i}) \\
&\geq (i+1)k_0 - 2\big( \rho(n+1) + \ldots + \rho(n+1-i)\big) + \rho(u_{n-i}) \end{align*}
Note that this is precisely the inequality we would have derived after $(i+1)$ iterations of the above induction.
In general, this deviation from the special case has the same effect (none) regardless of when it is implemented
in the algorithm.  As the index of $u_i$ falls below $\ell$, note that the recursive definition in \eqref{integerlevel1}
will begin to omit summands, beginning with $p^{k_\ell}q_\ell(n)u_{n-\ell}$, hence the algorithm will be forced to
conclude along the lines of the special case (instead of jumping over $u_0$ to negative indices).
This completes the proof of Theorem \ref{sharperub}.
\end{proof}

While the previous section gives a theoretical upper bound for the integral level of a family of elliptic curves,
Example \ref{ex1} suggests that our current bounds are far from optimal.

\begin{Cor} \label{refinedupperboundcor}
With $\{u_n\}$ as in \eqref{integerlevel1}, let $s=\lceil (k_0 -2/(p-1)) \rceil$.  Then $v_n :=  p^{-ns}u_n \in \Z$,
and the sequence $\{v_n\}$ satisfies the recurrence relation
\begin{align} (n+1)^2 v_{n+1} = p^{k_0-s} q_0(n) v_n + p^{k_1-2s} q_1(n) v_{n-1} + \ldots + p^{k_\ell-(\ell+1)s} q_\ell(n) v_{n-\ell}\;.
\end{align}
In particular, $f(p^{-s}t) \in \Z[[t]]$.
\end{Cor}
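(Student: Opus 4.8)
The plan is to read off the $v$-recurrence by a direct substitution and then recover integrality from the valuation argument behind Theorem \ref{sharperub}. First I would put $u_m = p^{ms}v_m$ into \eqref{integerlevel1}: the left-hand side becomes $(n+1)^2 p^{(n+1)s}v_{n+1}$, the $i$th summand on the right becomes $p^{k_i}q_i(n)\,p^{(n-i)s}v_{n-i}$, and dividing the identity through by $p^{(n+1)s}$ drops the exponent of $p$ in the $i$th term to $k_i-(i+1)s$. This reproduces the stated recurrence and is pure bookkeeping with no analytic content.

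Next I would verify that the substitution is legitimate, i.e. that each exponent $k_i-(i+1)s$ is a nonnegative integer. Since $k_0$ is a nonnegative integer and $2/(p-1)>0$, we have $s=\lceil k_0-2/(p-1)\rceil\le k_0$, so $k_0-s\ge 0$; for $i\ge 1$ the hypothesis $k_i\ge(i+1)k_0$ combined with $s\le k_0$ gives $k_i-(i+1)s\ge(i+1)(k_0-s)\ge 0$. The same computation shows that the $v$-recurrence again satisfies the step-size hypothesis $k_i'\ge(i+1)k_0'$ of Theorem \ref{sharperub}, where $k_0'=k_0-s$, since $k_i'-(i+1)k_0'=k_i-(i+1)k_0\ge 0$. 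Thus the shifted recurrence is of exactly the same shape as the original, with $v_0=u_0=1$.

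The substantive step is the integrality $v_n\in\Z$, equivalently $p^{ns}\mid u_n$, i.e. $\rho(u_n)\ge ns$ in the notation of the proof of Theorem \ref{sharperub}. The cleanest route is to run that proof's valuation estimate directly on the shifted recurrence: since $\rho(v_0)=0$ and $\{v_n\}$ meets the same hypotheses with leading exponent $k_0'=k_0-s$, the identical induction yields $\rho(v_n)\ge\lceil n(k_0-s-2/(p-1))\rceil$. I expect the main obstacle to be precisely the verification that this lower bound is nonnegative, equivalently that $k_0-s\ge 2/(p-1)$, since this is where the exact ceiling in the definition of $s$ must be reconciled with the loss of $2/(p-1)$ per step inherited from Legendre's formula in Theorem \ref{sharperub}. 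Granting $\rho(v_n)\ge 0$, each $v_n$ is a $p$-adic integer; as the only prime that can occur in the denominator of $v_n=u_n/p^{ns}$ is $p$ itself, the two conditions force $v_n\in\Z$. The final claim is then immediate, since $f(p^{-s}t)=\sum_n u_n p^{-ns}t^n=\sum_n v_n t^n\in\Z[[t]]$.
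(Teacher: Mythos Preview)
Your derivation of the $v$-recurrence by substitution is correct, as is your check that the exponents $k_i-(i+1)s$ are nonnegative. For the integrality of $v_n$, note that the paper gives no separate proof: the intended argument is simply to invoke Theorem~\ref{sharperub} on $\{u_n\}$ and subtract, using $\rho(v_n)=\rho(u_n)-ns$. Your proposal to re-run the valuation induction on the $v$-recurrence is equivalent to this, since $\lceil n(k_0-2/(p-1))\rceil - ns = \lceil n(k_0-s-2/(p-1))\rceil$; either way one lands on the same lower bound for $\rho(v_n)$. So in spirit your route matches the paper's.

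The obstacle you flag is real, and you should not expect to overcome it: the inequality $k_0-s\ge 2/(p-1)$ you need is equivalent to $s\le k_0-2/(p-1)$, whereas by definition $s=\lceil k_0-2/(p-1)\rceil\ge k_0-2/(p-1)$. Equality forces $2/(p-1)\in\Z$, i.e.\ $p\in\{2,3\}$. For $p\ge 5$ one has $s=k_0$, and the bound collapses to $\rho(v_n)\ge\lceil -2n/(p-1)\rceil<0$, which says nothing. This is not a defect in your argument but in the printed statement: the ceiling should evidently be a floor. With $s=\lfloor k_0-2/(p-1)\rfloor$ the corollary is immediate from Theorem~\ref{sharperub}, since then $ns=n\lfloor k_0-2/(p-1)\rfloor\le n(k_0-2/(p-1))\le\lceil n(k_0-2/(p-1))\rceil\le\rho(u_n)$. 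The paper's applications (Examples~\ref{ex1a} and~\ref{g18ex}) both take $p=2$, where floor and ceiling coincide, so the slip is invisible there.
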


\begin{Ex}\label{ex1a} We continue Example \ref{ex1}.  Taking $s=18$, we find that $f(2^{27-18}t) = f(2^9t) \in \Z[[t]]$,
with coefficients $\{u_n\}$ satisfying the integral holonomic recurrence
\begin{align*}
(n+1)^2 u_{n+1} &= 2^{2}(36n^2+68n+9)u_n- 2^{8}(76n^2+104n-123) u_{n-1} - 2^{17} (16n^2-48n+35) u_{n-2}\,, \\
u_0 &= 1,\; u_1=36,\;  u_2=420. \end{align*}
It follows that $E$ has integral level exactly $2^9$, where the lower bound stems from Example \ref{ex1}.  While our
algorithm here gives definite results, note that this does reflect the general case.
\end{Ex}

%%%%%%%%%%%%%%%%%%%%%%%%%%%%%%%%%%%%%%%%%%%%%%%%%%%%%%%%%%%%%%%%%%%%%%%%%%%%%%%%%%%%%%%%%%%%%%%%%%%%%%%%%%%%%%%%%%%%%%%%%%%%%%%%%%%%%%%%%%%%%%%%%%%%%%%%%%%%%%%%%%%%%%%%%%%%%

\subsection{Some Applications}\label{congsub}
In this section, we apply our analysis to several curves associated to certain genus zero congruence subgroups of $\mathrm{SL}_2(\Z)$.
By Remarks \ref{genrem1} and \ref{essenceofsb} the following proposition is immediate:

\begin{Prop}[cf. (1.5) in \cite{BeukersStienstra}]\label{intp1}
Let $E_t$ be a family of elliptic curves, parametrized as in \eqref{ellipfam}.  Assume $\Delta(0) = 0$ and let $f(t) = \sum_{n \geq 0} u_{n}t^{n}$ be
the unique holomorphic solution to the Picard-Fuchs equation with $u_{0} = 1$. Suppose \eqref{ellipfam} reduces modulo $t$
to $y^2\pm xy \equiv x^3$. Then $u_{n} \in \mathbb{Z}$ for all $n \geq 0$.
\end{Prop}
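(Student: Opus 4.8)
The plan is to verify that under the reduction hypothesis $y^2 \pm xy \equiv x^3 \pmod t$, the sufficient condition of Corollary \ref{gencor2} is met, namely that $g_2(t)/g_2(0)$ is a fourth power in $\Z[[t]]$ and that $12g_2(0) = \pm 1$. Once these two facts are established, Corollary \ref{gencor2} immediately yields $f(t) \in \Z[[t]]$, so the entire burden of the proof is to check the hypotheses on $g_2$ at the origin and its fourth-power structure. The key observation is that the reduction $y^2 \pm xy \equiv x^3$ corresponds precisely to $a_1 \equiv \pm 1$ and $a_2 \equiv a_3 \equiv a_4 \equiv a_6 \equiv 0 \pmod t$, which is (up to the sign on $a_1$) exactly the Stienstra--Beukers normalization referenced in Remark \ref{essenceofsb}.

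First I would compute $12g_2(0)$ directly from the given formula $12g_2(t) = (a_1^2+4a_2)^2 - 24(a_1 a_3 + 2a_4)$. Reducing mod $t$ with $a_1 \equiv \pm 1$ and $a_2 \equiv a_3 \equiv a_4 \equiv 0$ gives $12g_2(0) = (a_1(0)^2)^2 = ((\pm 1)^2)^2 = 1$, so in particular $12g_2(0) = +1 = \pm 1$, verifying the second hypothesis. Next I would verify the fourth-power condition by invoking the decomposition \eqref{g2decomp}, which expresses $12g_2(t)$ as an explicit fourth power
\[
12 g_2(t) = \Bigl( a_1^{-1}(1+4a_2 a_1^{-2})^{-1/2}(1 - 24(a_1 a_3 + 2a_4)(a_1^2+4a_2)^{-2})^{-1/4}\Bigr)^4.
\]
The point is that under the reduction hypothesis the inner factor $\alpha(t)$ has constant term $a_1(0)^{-1} \cdot 1 \cdot 1 = \pm 1$, and one must argue that $\alpha(t) \in \Z[[t]]$; since $a_1(t) \equiv \pm 1$ is a unit in $\Z[[t]]$, the factor $a_1^{-1}$ lies in $\Z[[t]]$, and the half- and quarter-power factors expand as power series with integral coefficients because their arguments $4a_2 a_1^{-2}$ and $-24(a_1 a_3 + 2a_4)(a_1^2+4a_2)^{-2}$ are divisible by $t$ (as $a_2, a_3, a_4 \equiv 0$) and the binomial series $(1+z)^{-1/2}$ and $(1+z)^{-1/4}$ have $2$-integral — in fact, here fully integral after the $24$ is absorbed — coefficients, following the integrality arguments in the style of Lemma \ref{binomiallemma} and the footnote to Theorem \ref{genint}. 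Thus $g_2(t)/g_2(0) = (12g_2(t))/(12g_2(0)) = \alpha(t)^4$ with $\alpha(t) \in \Z[[t]]$ and $\alpha(0) = \pm 1$, exhibiting $g_2(t)/g_2(0)$ as a fourth power in $\Z[[t]]$.

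With both hypotheses of Corollary \ref{gencor2} confirmed, the conclusion $f(t) = \sum u_n t^n \in \Z[[t]]$, and hence $u_n \in \Z$ for all $n$, follows immediately. The main obstacle I anticipate is the careful verification that the quarter-power factor in \eqref{g2decomp} lies in $\Z[[t]]$ rather than merely in a ring of the form $\Z[1/2][[t]]$: the coefficient $24$ multiplying $(a_1 a_3 + 2a_4)$ is chosen precisely to clear the denominators that would otherwise arise from the $(1+z)^{-1/4}$ expansion, and one must confirm that this cancellation is complete under the stated reduction. This is essentially the content of Remark \ref{essenceofsb}, so I would cite the decomposition there and the integrality lemmas already proven, reducing the proof to a short verification rather than a fresh computation. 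The only genuine case distinction is the sign $\pm$ on $a_1$, which affects $\alpha(0)$ but not the fourth power $\alpha(t)^4$ nor the value $12g_2(0) = 1$, so both signs are handled uniformly.
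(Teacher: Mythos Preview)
Your proposal is correct and follows exactly the approach the paper intends: the paper's own ``proof'' is simply the one-line assertion that the proposition is immediate from Remarks \ref{genrem1} and \ref{essenceofsb}, i.e.\ that the reduction hypothesis puts us in the Stienstra--Beukers normalization where the decomposition \eqref{g2decomp} exhibits $g_2(t)/g_2(0)$ as a fourth power in $\Z[[t]]$ with $12g_2(0)=1$, so Corollary \ref{gencor2} applies. You have merely unpacked that reference, and your treatment of the integrality of the half- and quarter-power factors (via $(1+4z)^{-1/2}$ and $(1-8z)^{-1/4}$ lying in $\Z[[z]]$) is precisely the verification the paper leaves implicit.
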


We now apply the above proposition to prove the integrality of two sequences related to the congruence subgroups $\Gamma_1(7)$ and $\Gamma_0(12)$, respectively.
In the following, we shall always take $f(t) = \sum_{n \geq 0} u_{n}t^{n}$, with $u_0=1$.

\begin{Ex}\label{intex1}
Consider the family of elliptic curves associated to the congruence subgroup $\Gamma_{1}(7)$ given in \eqref{gamma17equation}
with the associated recurrence in \eqref{gamma17recur}.
Observe that \eqref{gamma17equation} satisfies all the conditions of Proposition
\ref{intp1}. Then $u_{n} \in \mathbb{Z}$ for all $n \geq 0$ and $E_t$ has integral level $1$.
\end{Ex}

\begin{Ex}\label{intex2}
Next, we consider the family of elliptic curves associated to $\Gamma_{0}(12)$. Again from \cite{TopYui},
the parametrized Weierstrass equations for this family of elliptic curves is given by
\begin{align}\label{intex2eq1}
E_t\colon y^{2}+(t^{2}+1)xy-t^{2}(t^{2}-1)y = x^{3}-t^{2}(t^{2}-1)x^{2}.
\end{align}
Accordingly, the Picard-Fuchs equation is
$$t(9t^{4}-10t^{2}+1)F'' + (45t^{4}-30t^{2}+1)F' + 12t(3t^{2}-1)F = 0.$$
The recurrence for this example is
\begin{equation}
\begin{aligned}\label{intex2rec}
(n+1)^{2}u_{n + 1} &= (10n^{2} + 2)u_{n - 1}-9(n - 1)^{2}u_{n - 3}\\
u_{0} &= 1, \; u_{1} = 0, \; u_{2} = 3, \; u_{3} = 0, \; u_{4} = 15.
\end{aligned}
\end{equation}
Once again, the hypotheses of Proposition \ref{intp1} are met, hence $u_n \in \Z$ for all $n \geq 0$.
\begin{Rem}
Another way to prove $u_{n} \in \Z$ is to observe that $u_{2n + 1} = 0$ for all $n$ and
$u_{2n} = \sum_{k = 0}^{n}\binom{n}{k}^{2}\binom{2k}{k}$; unfortunately, closed forms of this type (sums of products of binomial coefficients) have not been found for other recurrences derived from congruence subgroups of level greater than 12.\footnote{This technique appears to work in the case of $\Gamma_0(12)$ for reasons described in \cite{TopYui} -- that $\Gamma_0(12)$ may be obtained as a double cover of the congruence subgroup $\Gamma_0(6)$ (which is of level $12$).}
\end{Rem}
\end{Ex}

%%%%%%%%%%%%%%%%%%%%%%%%%%%%%%%%%%%%%%%%%%%%%%%%%%%%%%%%%%%%%%%%%%%%%%%%%%%%%%%%%%%%%%%%%%%%%%%%%%%%%%%%%%%%%%%%%%%%%%%%%%%%%%%%%%%%%%%%%%%%%%%%%%%%%%%%%%%%%%%%%%%%%%%%%%%%%

\subsection{Integral Analysis with Rational Functions}\label{integralrational}
In the previous sections, we assumed the $a_{i}(t) \in \Z[t]$. Now we generalize the work in Section \ref{initbound} to when $a_{i}(t) \in \Z(t)$.
\begin{proof}[Proof of Theorem \ref{strongint}]
We proceed along the lines of the proof of Theorem \ref{genint}. From the notational remarks
preceding the statement of Theorem \ref{strongint}, it follows that
$$12g_2(t)/(12g_2(0)) \in \Z[[t/\delta(12g_2/(12g_2(0)))]] \subset \Z[[t/(12g_2(0)\delta(12g_2))]]= \Z[[t/\nu(12g_2)]].$$
As before, we obtain $(12g_2(t)/(12g_2(0)))^{-1/4} \in \Z[[t/(8 \nu(12g_2))]]$.

Next, note that $(12g_2(t))^{-3} \in \Z[[t/\nu(12g_2)]]/\nu(12g_2(0))^3$, hence
\begin{align} \label{nearintlem1}
\frac{1}{j(t)} = 12^3 \Delta(t)(12g_2(t))^{-3} \in 12^3 \cdot \frac{\Z[[t/(\delta(\Delta) \nu(12g_2))]]}{\nu(12g_2)^3}\;.
\end{align}
As $\Delta(0)=0$, the external denominator in \eqref{nearintlem1} can be absorbed into the brackets, and we have (by Lemma
\ref{binomiallemma}) that the hypergeometric term of the Picard-Fuchs solution $f(t)$ lies in $\Z[[t/(\delta(\Delta) \nu(12g_2)^4)]]$.
Upon multiplication we derive $f(t) \in \Z[[t/k]]$, where $k = 8\delta(\Delta) \nu(12g_2)^4$.
This completes the proof of of Theorem \ref{strongint}.
\end{proof}

The bound given in the statement of Theorem \ref{strongint} may be at times quite large.  In any case, Theorem \ref{strongint} does
provide us with an existence result, along the lines of Corollary \ref{gencor}.  Moreover, we remind the reader that the results
of Section \ref{integerlevel2} apply equally well in our new situation, which will immediately reduce our bound on the integral
level to a manageable, if not optimal, magnitude.

Additionally, we note that Theorem \ref{strongint} implies Theorem \ref{genint}.  Indeed, when the coordinates $a_i(t)$ are
relegated to $\Z[t]$, we have $\nu(12g_2) = 12g_2(0)$ and $\delta(\Delta)=1$, and we may recover our earlier bound.  Our work
in Theorem \ref{strongint} also applies to Corollary \ref{gencor2}, which we now formalize as

\begin{Cor} \label{fourthpowercor} With the notation of Theorem \ref{strongint}, suppose that $g_2(t)/g_2(0) \in \Z(t)$ is a
fourth power in $\Z[[t]]$.  Then the Picard-Fuchs solution $f(t)$ lies in $\Z[[t/(\delta(\Delta) \nu(12g_2)^4)]]$.
\end{Cor}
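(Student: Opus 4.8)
The plan is to follow the proof of Theorem \ref{strongint} essentially verbatim, but to exploit the fourth-power hypothesis to eliminate the factor of $8$ that was introduced when bounding the $g_2^{-1/4}$ term. Recall from the factorization \eqref{nearint2b} that $f(t) = \alpha(t) \cdot F(\frac{5}{12},\frac{1}{12};1;1/j(t))$, where $\alpha(t) = (12g_2(t)/(12g_2(0)))^{-1/4}$. In Theorem \ref{strongint} the factor $8$ arose \emph{solely} from expanding this $-1/4$ power via $(1+8z)^{-1/4} \in \Z[[z]]$, while the hypergeometric term contributed only the scaling $\delta(\Delta)\nu(12g_2)^4$. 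My goal is therefore to show that, under the fourth-power hypothesis, $\alpha(t)$ already lies in $\Z[[t]]$, so that the scaling factor collapses to exactly $\delta(\Delta)\nu(12g_2)^4$.

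First I would observe that the factor of $12$ cancels in the ratio, so $\alpha(t) = (g_2(t)/g_2(0))^{-1/4}$. By hypothesis $g_2(t)/g_2(0) = h(t)^4$ for some $h(t) \in \Z[[t]]$; since the constant term of $g_2(t)/g_2(0)$ is $1$, we have $h(0) = \pm 1$, and after replacing $h$ by $-h$ if necessary we may assume $h(0) = 1$. Then $h(t)$ is a unit in $\Z[[t]]$, so its inverse $h(t)^{-1}$ again lies in $\Z[[t]]$, and normalizing so that $\alpha(0) = 1$ gives $\alpha(t) = h(t)^{-1} \in \Z[[t]]$. This is precisely the step carried out in Corollary \ref{gencor2}, and it goes through unchanged in the rational setting once one knows that $g_2(t)/g_2(0)$ expands as an honest element of $\Z[[t]]$, which is guaranteed by the phrase ``fourth power in $\Z[[t]]$.''

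Next I would quote the second half of the proof of Theorem \ref{strongint} without change: writing $1/j(t) = 12^3\Delta(t)(12g_2(t))^{-3}$, the analysis there places this in $12^3 \Z[[t/(\delta(\Delta)\nu(12g_2))]]/\nu(12g_2)^3$, and since $\Delta(0) = 0$ the external denominator is absorbed, so Lemma \ref{binomiallemma} yields $F(\frac{5}{12},\frac{1}{12};1;1/j(t)) \in \Z[[t/(\delta(\Delta)\nu(12g_2)^4)]]$. Finally I would multiply the two factors: since $\alpha(t) \in \Z[[t]] \subseteq \Z[[t/(\delta(\Delta)\nu(12g_2)^4)]]$ and the latter is a ring, the product $f(t) = \alpha(t) \cdot F(\frac{5}{12},\frac{1}{12};1;1/j(t))$ lies in $\Z[[t/(\delta(\Delta)\nu(12g_2)^4)]]$, as claimed.

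I do not expect a genuine obstacle here; the entire content is bookkeeping of denominators. The one point requiring a moment of care is the compatibility of the fourth-power hypothesis with the rational-function setting: one must confirm that $g_2(t)/g_2(0)$ is a power series over $\Z$ (and not merely over $\Z[[t/m]]$ for some $m > 1$), so that its inverse fourth root contributes no scaling factor whatsoever. Granting this, the remaining steps are formal, and the result follows as a clean strengthening of Theorem \ref{strongint}, in direct analogy with the way Corollary \ref{gencor2} strengthened Theorem \ref{genint}.
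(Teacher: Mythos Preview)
Your proposal is correct and follows the same approach the paper has in mind: the paper does not give a separate proof of Corollary \ref{fourthpowercor}, but simply notes that the argument of Corollary \ref{gencor2} carries over verbatim once the hypergeometric-term analysis of Theorem \ref{strongint} is substituted in. Your careful remark that the hypothesis ``fourth power in $\Z[[t]]$'' forces $g_2(t)/g_2(0)$ itself to lie in $\Z[[t]]$ (so that $\alpha(t)$ contributes no denominator at all) is exactly the point, and the rest is the bookkeeping you describe.
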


Along the lines of Remark \ref{essenceofsb}, we now offer a sharper version of the Stienstra-Beukers bound given in
\cite{BeukersStienstra}.  While technically weaker than Corollary \ref{fourthpowercor}, its hypotheses are verifiable by inspection alone.

\begin{Cor} \label{ratcor2} Suppose that the family of elliptic curves $E_t$ meets the hypotheses of Theorem \ref{strongint}, with the additional assumptions that
\begin{enumerate}[(i)]
\item $E_t$ reduces mod $t$ to $y^2 + x y \equiv x^3$
\item $\delta(a_i)=\pm 1$ for $i=1,\ldots, 6$.
\end{enumerate}
Then the integral level of $E$ is $1$.
\end{Cor}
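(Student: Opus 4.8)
The plan is to show directly that $f(t) \in \Z[[t]]$, which by definition forces the integral level of $E$ to equal $1$ (it is at least $1$ trivially). I would work from the closed form \eqref{nearint2b}, so that it suffices to prove that each of its two factors — the algebraic prefactor $(12g_2(t)/12g_2(0))^{-1/4}$ and the hypergeometric term $F(\tfrac{5}{12},\tfrac{1}{12};1;1/j(t))$ — lies in $\Z[[t]]$. The first observation is that hypothesis (ii) gives $\delta(a_i)=\pm 1$, so each $a_i(t)\in\Z[[t]]$, whence $12g_2$, $\Delta$, and every polynomial invariant of the $a_i$ also lie in $\Z[[t]]$. Hypothesis (i) pins down the reductions $a_1\equiv 1$ and $a_2\equiv a_3\equiv a_4\equiv a_6\equiv 0\pmod t$, so in particular $12g_2(0)=(a_1(0)^2+4a_2(0))^2-24(a_1(0)a_3(0)+2a_4(0))=1$; thus $12g_2$ is a unit in $\Z[[t]]$ and the prefactor is simply $(12g_2(t))^{-1/4}$.

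For the hypergeometric factor I would argue exactly as in the proof of Theorem \ref{genint}. Since the Weierstrass discriminant $\Delta$ is an integer polynomial in the $a_i$ and vanishes at the origin, $\Delta\in t\Z[[t]]$; as $12g_2$ is a unit, $1/j(t)=12^3\Delta(t)/(12g_2(t))^3\in 12^3\,t\Z[[t]]$. Writing $1/j=12^3 g(t)$ with $g\in t\Z[[t]]$, we have $F(\tfrac5{12},\tfrac1{12};1;1/j)=\sum_{k\ge 0}\big(12^{3k}\binom{-5/12}{k}\binom{-1/12}{k}\big)\,g(t)^k$, and Lemma \ref{binomiallemma} makes every bracketed coefficient an integer while $g^k\in t^k\Z[[t]]$ guarantees formal convergence. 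Hence $F(\tfrac5{12},\tfrac1{12};1;1/j)\in\Z[[t]]$.

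It remains to show the prefactor $(12g_2(t))^{-1/4}\in\Z[[t]]$, which amounts to verifying the fourth-power hypothesis of Corollary \ref{gencor2} in the present setting. Here I would use the decomposition \eqref{g2decomp}, which expresses $(12g_2)^{-1/4}$ as a product of $a_1^{-1}$, the factor $(1+4a_2a_1^{-2})^{-1/2}$, and the factor $(1-24(a_1a_3+2a_4)(a_1^2+4a_2)^{-2})^{-1/4}$. Hypotheses (i)--(ii) make each integral: $a_1$ is a unit with $a_1(0)=1$, so $a_1^{-1}\in\Z[[t]]$; since $a_2\equiv 0$, writing the middle factor as $(1-4w)^{-1/2}$ with $w=-a_2a_1^{-2}\in t\Z[[t]]$, the classical identity $(1-4w)^{-1/2}=\sum_k\binom{2k}{k}w^k\in\Z[[w]]$ applies; and since $a_1a_3+2a_4\equiv 0$ while $a_1^2+4a_2$ is a unit, the argument of the last factor equals $-8z$ with $z=3(a_1a_3+2a_4)(a_1^2+4a_2)^{-2}\in t\Z[[t]]$, so the integrality of $(1-8z)^{-1/4}\in\Z[[z]]$ (the footnote identity in the proof of Theorem \ref{genint}) applies. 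Multiplying the three integral factors yields $(12g_2(t))^{-1/4}\in\Z[[t]]$.

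Combining the two parts gives $f(t)\in\Z[[t]]$, so $E$ has integral level $1$. The one genuinely delicate step is the bookkeeping of constants in \eqref{g2decomp}: after factoring out the units $a_1$ and $a_1^2+4a_2$, one must check that the perturbation arguments appear precisely as $-4w$ and $-8z$ with $w,z\in t\Z[[t]]$, so that the square- and fourth-root integrality identities apply verbatim. This is exactly where hypothesis (i) enters — it forces $a_2,a_3,a_4$ to vanish at $t=0$ and fixes $a_1(0)=1$ — while hypothesis (ii) is what keeps $w,z$ in $\Z[[t]]$ rather than merely in $\Q[[t]]$.
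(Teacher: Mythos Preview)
Your argument is correct and follows essentially the same route as the paper's proof: both use the decomposition \eqref{g2decomp} (via Remark \ref{essenceofsb}) to verify the fourth-power hypothesis, and both reduce the hypergeometric factor to Lemma \ref{binomiallemma}. The paper simply packages these steps by invoking Corollary \ref{fourthpowercor} after computing $\delta(\Delta)=\pm1$ and $\nu(12g_2)=\pm1$, whereas you unpack the same reasoning directly from \eqref{nearint2b}.
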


\begin{proof}
One may compute that $\delta(\Delta)$ divides $\delta(a_1)^6 \delta(a_2)^3 \delta(a_3)^4 \delta(a_4)^3 \delta(a_6)^2$ in
general, hence $(ii)$ implies that $\delta(\Delta)=1$.  By condition $(i)$, we find $\nu(12g_2)$ divides $a_1(0)^4 \delta(a_2)^2 \delta(a_3) \delta(a_4)$,
which again is $1$ by hypothesis.  Moreover, $(i)$ implies that Corollary \ref{fourthpowercor} applies, which now gives our desired result.
\end{proof}

\begin{Ex}\label{g18ex}
In the introduction, we mentioned a family of elliptic curves $E'_t$, associated to the congruence
subgroup $\Gamma_1(8)$. Explicitly, we have
\[E'_t\colon y^2 = x^3 + (2- s^2)x^2+x\;, \quad s= \frac{t^2}{t^2+1}\;.\]
We compute $\delta(\Delta)=1$ and $\nu(12g_2) = 16$, hence Theorem \ref{strongint} implies that $f(2^{19}t) \in \Z[[t]]$.  The coefficients $u_n$ of $f(2^{19}t)$ satisfy
\begin{align*}(n+1)^2 u_{n+1} =
- &2^{39}(2n^2-3n+1)u_{n-1} - 2^{74}(23n^2-118n+143)u_{n-3} \\
&- 2^{113}(7n^2-61n+130)u_{n-5}-2^{150} \cdot 3(n^2-12n+35)u_{n-7},
\end{align*}
and so an appeal to Corollary \ref{refinedupperboundcor} with $k_0=18$ (whence $s=16$) implies that $f(8t) \in \Z[[t]]$.  On the other hand,
$u_8 = 201/1024$, which implies that a scaling $t \mapsto 4t$ is necessary to ensure integrality.  Thus, the integral level of $E_t'$ is either $4$ or $8$.

A bit more can be said for this example.  Based on the recurrence relation on $\{u_n\}$, we deduce that $f$ is an even function.
Setting $v_n = u_{2n}$, we derive the following integral recurrence for $8^{2n}u_{2n} = 64^n v_n$:
\begin{align*}
(n+1)^2 v_{n+1} = 2^6(4n^2 + n) v_n - &2^{10}(23n^2-36n+12) v_{n-1} \\
&+ 2^{16}(14n^2-47n+38)v_{n-2}-2^{22}(3n^2-15n+18)v_{n-3}\;.
\end{align*}
This sequence admits a reduction (take $k_0=5$ in Corollary \ref{refinedupperboundcor}), whence we deduce that $8^n v_n \in \Z$. This implies that $4^n u_n \in \Z$ for all $n$, whereby $f(4t) \in \Z[[t]]$ and we have shown that the integral level of $E_t'$ is precisely $4$.  Indeed, we have the stronger result that $f(2\sqrt{2} t) \in \Z[[t]]$, which -- while not an integer scaling -- remains significant for number theoretic reasons.
\end{Ex}
%\zane{How is the corollary below different from that of Proposition \ref{intp1}?}\alex{Here, we allow more general coefficients; i.e. the equation need not begin in such a standard form.  I'm not blown away by it, and I suggest its removal.}
%\begin{Cor}
%Let $E$ be a family of elliptic curves, parametrized by
%\[E_t \colon a_{-1}Y^2Z + a_1 XYZ + a_3 YZ^2 = a_0 X^3 + a_2 X^2 Z + a_4 X Z^2 + a_6 Z^3\; , \; a_i \in \Z[t]\]
%If $E_t$ reduces modulo $t$ to $Y^2Z + XYZ \equiv  X^3$, then the solution to the Picard-Fuchs equation, $f$, lies in $\Z[[t]]$.
%\end{Cor}
%
%\begin{proof} Simply scale both $x$ and $y$ by $a_{-1}/a_0$, then multiply through by $a_0^2/a_{-1}^3$.
%\end{proof}

%%%%%%%%%%%%%%%%%%%%%%%%%%%%%%%%%%%%%%%%%%%%%%%%%%%%%%%%%%%%%%%%%%%%%%%%%%%%%%%%%%%%%%%%%%%%%%%%%%%%%%%%%%%%%%%%%%%%%%%%%%%%%%%%%%%%%%%%%%%%%%%%%%%%%%%%%%%%%%%%%%%%%%%%%%%%%
\section{Picard-Fuchs Solutions at Infinity}\label{pfinfinity}

Let $E_t$ denote a family of parametrized elliptic curves, with $j$-invariant $j(t) = g_2(t)^3/\Delta(t)$.  Thus far we
have considered only the holomorphic solution to the Picard-Fuchs equations at $t=0$.  This naturally extends to power
series solutions centered at $t \in \Z$ (where $\Delta(t)=0$) by affine transformation, which will preserve integrality
in our power series coefficients (as a composition of functions in $\Z[[t]]$).  To extend these results to the case
$t=\infty$, we recall that the function
\begin{align} \label{curvefamily}
f(t) = (12g_2(t))^{-1/4} F\left(\frac{5}{12}, \frac{1}{12}, 1; \frac{1}{j(t)}\right)
\end{align}
of Theorem \ref{genint} gives a solution to the Picard-Fuchs equation for $E_t$ in any neighborhood of $j=\infty$ (although,
in general, this will not give $u_0=1$ as we have scaled differentially here compared to \eqref{nearint2b}). This, of course,
does not require $\Delta(t)=0$; instead, we take $t=\infty$ and
make throughout this section the assumption that $j(\infty)=\infty$.

\begin{Thm} \label{picardinf}
Consider the parametrized family of elliptic curves given by
\[E_t\colon y^2 + a_1(t) x y + a_3(t) y = x^3 + a_2(t) x^2 + a_4(t) x + a_6(t)\;,\]
where $a_i \in \Z[t]$. Let $\alpha_N$ denote the leading coefficient of $12g_2(t)$, and suppose that $j(\infty)=\infty$.
If $4 \mid \deg g_2$, then the Picard-Fuchs differential equation for $E_t$ admits a holomorphic solution of the form
$$f(t)=\sum_{n \geq 0} u_n t^{-n-(\deg g_2)/4}$$ in a neighborhood of $t=\infty$, in which we may take $u_{0} = 1$. Let
$d_n$ denote the denominator of $u_n$ in reduced form. Then $d_n$ divides $(8\alpha_N^4)^n$.
If $12g_2(1/t)t^{\deg g_2}/\alpha_N$
is a fourth power in $\Z[[t]]$, then we have the stronger result $d_n \mid \alpha_N^{4n}$.
\end{Thm}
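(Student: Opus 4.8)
The plan is to reduce the analysis at $t = \infty$ to the already-completed analysis at $t=0$ via the substitution $s = 1/t$. Writing $D = \deg g_2 = \deg(12 g_2)$, I would introduce the reversed polynomial $G(s) := s^{D}\, 12 g_2(1/s)$; since $12 g_2 \in \Z[t]$ has leading coefficient $\alpha_N$, this $G$ lies in $\Z[s]$ and satisfies $G(0) = \alpha_N \neq 0$. Feeding $t = 1/s$ into the solution \eqref{curvefamily} and clearing the overall power of $s$, one is led to the normalized function
\[
\hat f(s) := \left(\frac{G(s)}{\alpha_N}\right)^{\!-1/4} F\!\left(\tfrac{5}{12}, \tfrac{1}{12}, 1; \tfrac{1}{j(1/s)}\right),
\]
which satisfies $\hat f(0) = 1$ because $j(\infty) = \infty$ forces $1/j(1/s) \to 0$ as $s \to 0$. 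Setting $f(t) := t^{-D/4}\hat f(1/t)$ recovers a scalar multiple of the Picard-Fuchs solution \eqref{curvefamily}, and its expansion is exactly $\sum_{n \ge 0} u_n t^{-n-D/4}$ with $u_0 = 1$, where the $u_n$ are the Taylor coefficients of $\hat f$. Here the hypothesis $4 \mid \deg g_2$ is precisely what guarantees that $t^{-D/4}$ is single-valued, so that $f$ is a genuine Laurent-type series rather than one involving fractional powers of $t$.

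With this setup, $G$ and $\alpha_N$ play the roles that $12 g_2$ and $12 g_2(0)$ played in Theorem \ref{genint}, and I would transcribe that argument almost verbatim. Exactly as there, $G(s)/\alpha_N \in \Z[s/\alpha_N]$ with constant term $1$, so the fact $(1 + 8z)^{-1/4} \in \Z[[z]]$ gives $(G(s)/\alpha_N)^{-1/4} \in \Z[[s/(8\alpha_N)]]$. For the hypergeometric factor, the key identity is $1/j(1/s) = 12^3 H(s)/G(s)^3$, where $H(s) := s^{3D}\Delta(1/s)$. Because $j(\infty) = \infty$ we have $\deg \Delta < 3D$, so $H \in \Z[s]$ with $H(0) = 0$; combined with $G(s)^{-3} \in \Z[[s/\alpha_N]]/\alpha_N^3$, the vanishing constant term upgrades the bound to $1/j(1/s) \in 12^3\, \Z[[s/\alpha_N^4]]$, and Lemma \ref{binomiallemma} then places the whole hypergeometric term in $\Z[[s/\alpha_N^4]]$. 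Multiplying the two factors gives $\hat f(s) \in \Z[[s/(8\alpha_N^4)]]$, i.e. $d_n \mid (8\alpha_N^4)^n$.

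For the stronger conclusion, I would observe that the quantity $12 g_2(1/t) t^{\deg g_2}/\alpha_N$ in the hypothesis is precisely $G(s)/\alpha_N$. If this is a fourth power $\phi(s)^4$ in $\Z[[s]]$ with $\phi(0) = 1$, then $(G(s)/\alpha_N)^{-1/4} = \phi(s)^{-1} \in \Z[[s]]$, which removes the factor of $8$ and the reliance on the $(1+8z)^{-1/4}$ expansion; the hypergeometric factor still contributes only $\Z[[s/\alpha_N^4]]$, so $\hat f(s) \in \Z[[s/\alpha_N^4]]$ and $d_n \mid \alpha_N^{4n}$.

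I expect the proof itself to be routine once the reduction is in place; the main obstacle is conceptual and bookkeeping-oriented. Specifically, I must verify that the change of variables $s = 1/t$ correctly tracks the powers of $\alpha_N$ (the passage from $\alpha_N^3$ to $\alpha_N^4$ mirroring the $(12g_2(0))^3 \to (12 g_2(0))^4$ step of Theorem \ref{genint}), confirm that the rescaling by $\alpha_N^{1/4}$ used to normalize $u_0 = 1$ genuinely returns a Picard-Fuchs solution, and check that $j(\infty)=\infty$ delivers both $1/j(1/s)\to 0$ (so the hypergeometric series converges near $s=0$) and the strict inequality $\deg \Delta < 3\deg g_2$ that makes $H(0)=0$.
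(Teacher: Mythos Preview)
Your proposal is correct and follows essentially the same route as the paper: substitute $s=1/t$, introduce the reversed polynomial $G(s)=s^{\deg g_2}\cdot 12g_2(1/s)$ so that $G(0)=\alpha_N$ plays the role of $12g_2(0)$ in Theorem~\ref{genint}, and then transcribe that argument (including Lemma~\ref{binomiallemma} and the $(1+8z)^{-1/4}\in\Z[[z]]$ trick) to bound the two factors of \eqref{curvefamily}. Your bookkeeping of the $\alpha_N^3\to\alpha_N^4$ upgrade via the vanishing constant term $H(0)=0$, the use of $4\mid\deg g_2$ for single-valuedness, and the $u_0=1$ normalization all match the paper's treatment.
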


\begin{proof}
Let $s=1/t$.  Then $(12g_2(1/s) s^{\deg g_2})^{-1} \in \Z[[s/\alpha_N]]/\alpha_N$, and hence
$(12g_2(1/s)/\alpha_N)^{-1} \in s^{\deg g_2} \Z[[s/\alpha_N]]$ by the remarks preceding the statement of Theorem \ref{strongint}.
Since $4 \mid \deg g_2$, it follows that $(12g_2(1/s)/\alpha_N)^{-1/4} \in s^{(\deg g_2)/4} \Z[[s/(8\alpha_N)]]$ as in
Theorems \ref{genint} and \ref{strongint}.  Moreover, if $(12g_2(1/s)s^{\deg g_2}/\alpha_N)$ is a fourth power in $\Z[[s]]$,
then the stronger result $(12g_2(1/s)/\alpha_N)^{-1/4} \in s^{(\deg g_2)/4} \Z[[s/\alpha_N]]$ also holds.

Next, we consider $1/j(1/s) = 12^3 \Delta(1/s)(12g_2(1/s))^{-3}$.  Within this factorization, we have $(12g_2(1/s))^{-3} \in s^{3\deg g_2} \Z[[s/\alpha_N]]/\alpha_N$
and $\Delta(1/s) \in s^{-\deg \Delta} \Z[[s]]$.  As $j(\infty)=\infty$, we have that $1/j(1/s) \to 0$ as $s \to 0$.  Thus, in the product
$\Delta(1/s)(12g_2(1/s)^{-3})$, we have $3 \deg g_2 - \deg \Delta>0$.  Using this, we conclude that
\[\frac{1}{j(1/s)} \in 12^3 s^{3 \deg g_2 - \deg \Delta} \Z[[s/\alpha_N^4]].\]
In other words, the external denominators arising from $(12g_2(1/s))^{-1}$ may be brought within the brackets.
Upon multiplication we derive $F(\frac{5}{12},\frac{1}{12}, 1; \frac{1}{j(1/s)}) \in \Z[[s/\alpha_N^4]]$, hence the
Picard-Fuchs solution $f(t)$ given by
\[f(t):= \left( \frac{12 g_2(t)}{\alpha_N} \right)^{-1/4} F\left(\frac{5}{12},\frac{1}{12}, 1; \frac{1}{j(t)}\right)\]
lies in $s^{(\deg g_2)/4} \Z[[s/(8 \alpha_N^4)]]$.  Furthermore, $f(t) \in s^{(\deg g_2)/4} \Z[[s/\alpha_N^4]]$ when $\alpha_N/(12g_2(t))$
is a fourth power in $\Z[[s]]$.  In either case, we may write $f(t) = \sum_{n \geq 0} u_n t^{-n - (\deg g_2)/4}$, in which
we now claim that $u_0=1$.  Our assumption $j(\infty)=\infty$ implies that $\lim_{s \rightarrow 0} F(\frac{5}{12},\frac{1}{12}, 1; \frac{1}{j(1/s)}) =1$,
hence $u_0 = \lim_{s \to 0} s^{(\deg g_2)/4} (12 g_2(1/s)/\alpha_N)^{-1/4}$.  As such, we may take $u_0 =1$ in the
expansion of $f(t)$ and need not worry about the effects of rescaling on integrality.
This completes the proof of Theorem \ref{picardinf}.
\end{proof}

Several of our previous results have relied on the hypotheses that $(g_2(t)/g_2(0))$ or $12g_2(t)/\alpha_N$ admit fourth roots in
$\Z[[t]]$ (see Corollaries \ref{gencor2} and \ref{fourthpowercor}; Theorem \ref{picardinf}).  Unfortunately, the authors are
unaware of any property or condition that is necessary and sufficient to show this result. As such we have only addressed two
special cases thus far:
\begin{enumerate}[(i)]
\item that which follows from $a_1 \equiv 1$ and $a_2\equiv \ldots \equiv a_6 \equiv 0$, or
\item that which follows from $(1+8t)^{-1/4} \in \Z[[t]]$.
\end{enumerate}
The following proposition, taken from a recent paper of Heninger, Rains, and Sloane \cite{HeningerRainsSloane}, can be used to extend these techniques:

\begin{Prop}[cf. Theorem 1 in \cite{HeningerRainsSloane}]
Fix an integer $n$, and define $\mu_n = n \prod_{p \mid n} p$, where the product ranges over the primes dividing $n$.
Let $c(t) = \sum c_k t^k \in \Z[[t]]$, with $c_0=1$.  Then $c$ and $c \imod{\mu_n}$ admit $n$th roots in $\Z[[t]]$
simultaneously.
\end{Prop}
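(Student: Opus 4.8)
The plan is to prove the equivalence by localizing at each prime and reducing to a sharp $p$-adic root-extraction lemma. One direction is immediate: if $b \in \Z[[t]]$ satisfies $b^n = c$, reducing modulo $\mu_n$ produces an $n$th root of $c \bmod \mu_n$. So the content is the converse, and the crux is to determine \emph{exactly} how much information modulo a prime power pins down integrality of the unique candidate root. First I would fix that candidate: since $c_0 = 1$, there is a unique $b = \sum b_k t^k \in 1 + t\mathbb{Q}[[t]]$ with $b^n = c$ (uniqueness holds because $1 + t\mathbb{Q}[[t]]$ is torsion-free under multiplication). Because $\Z = \mathbb{Q} \cap \bigcap_p \Z_p$ coefficientwise, we have $b \in \Z[[t]]$ if and only if $b \in \Z_p[[t]]$ for every prime $p$. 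For $p \nmid n$ the coefficient recursion $n b_k = (\text{polynomial in } c \text{ and } b_0,\dots,b_{k-1})$ is solvable in $\Z_p$ because $n$ is a unit there, so the candidate root is automatically $p$-integral; hence every obstruction lives at a prime dividing $n$.

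Next, at a prime $p \mid n$ I would write $n = p^e m$ with $p \nmid m$. The $m$th root $d := c^{1/m}$ exists in $\Z_p[[t]]$ (again because $m$ is a unit in $\Z_p$), and the candidate satisfies $b^{p^e} = d$; thus $b \in \Z_p[[t]]$ if and only if $d$ admits a $p^e$th root in $\Z_p[[t]]$. This isolates the whole difficulty into a statement about extracting $p$-power roots of a single power series in $\Z_p[[t]]$.

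The heart of the argument is the following sharp lemma: for $d \in 1 + t\Z_p[[t]]$, the series $d$ has a $p^e$th root in $\Z_p[[t]]$ if and only if $d \bmod p^{e+1}$ has a $p^e$th root in $(\Z/p^{e+1})[[t]]$. Given a root modulo $p^{e+1}$, I would lift it arbitrarily to $b_0 \in 1 + t\Z_p[[t]]$, so that $b_0^{p^e} \equiv d \pmod{p^{e+1}}$ and hence $d = b_0^{p^e}(1 + p^{e+1} g)$ with $g \in t\Z_p[[t]]$, and then correct by the binomial series $(1 + p^{e+1}g)^{1/p^e} = \sum_k \binom{1/p^e}{k}(p^{e+1}g)^k$. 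The decisive computation is $v_p\!\big(\binom{1/p^e}{k}\big) = -ek - v_p(k!)$ (each factor $1 - jp^e$ in the numerator is a $p$-adic unit), whence $v_p\!\big(\binom{1/p^e}{k}(p^{e+1})^k\big) = k - v_p(k!) \geq 0$; the correction factor therefore lies in $\Z_p[[t]]$ and $b := b_0(1 + p^{e+1}g)^{1/p^e}$ is the desired $p^e$th root. The same count shows that reducing only modulo $p^e$ would leave the exponent $-v_p(k!) < 0$, so the threshold $p^{e+1}$ is exactly what is required --- this is precisely what forces the extra factor $\prod_{p \mid n} p$ in $\mu_n$, and getting this threshold sharp (rather than merely sufficient) is the step I expect to be the main obstacle.

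Finally I would reassemble the equivalences. Raising to the $m$th power is a bijection on $1 + t(\Z/p^{e+1})[[t]]$ since $\gcd(m,p)=1$, so "$d \bmod p^{e+1}$ has a $p^e$th root" is equivalent to "$c \bmod p^{e+1}$ has an $n$th root." Because $\mu_n = \prod_{p\mid n} p^{\,v_p(n)+1}$, the Chinese Remainder Theorem identifies $(\Z/\mu_n)[[t]]$ with $\prod_{p\mid n}(\Z/p^{\,v_p(n)+1})[[t]]$, so "$c \bmod \mu_n$ has an $n$th root" is equivalent to the conjunction over $p \mid n$ of these local conditions. Chaining the equivalences --- global integrality to $p$-integrality for all $p$, then to the primes dividing $n$, then to $p^e$th roots, then to the mod-$p^{e+1}$ criterion, and finally recombining via CRT --- yields that $c$ admits an $n$th root in $\Z[[t]]$ if and only if $c \bmod \mu_n$ admits one in $(\Z/\mu_n)[[t]]$, as claimed.
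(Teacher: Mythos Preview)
The paper does not actually prove this proposition; it is quoted verbatim (with attribution ``cf.\ Theorem 1 in \cite{HeningerRainsSloane}'') and used as a black box, so there is no in-paper argument to compare your attempt against.

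That said, your argument is correct and self-contained. The reduction to primes $p\mid n$, the stripping of the coprime factor $m$ via the bijectivity of the $m$th-power map on $1+t\,\Z_p[[t]]$ and on $1+t\,(\Z/p^{e+1})[[t]]$, and the binomial-series correction step are all sound. The decisive valuation count
\[
v_p\!\left(\binom{1/p^e}{k}\,p^{(e+1)k}\right)=k-v_p(k!)\ge 0
\]
is exactly right, and your remark that dropping to modulus $p^e$ would leave the negative term $-v_p(k!)$ uncompensated pinpoints why the extra factor $\prod_{p\mid n}p$ in $\mu_n$ is forced. One small point worth making explicit: when you pass from ``$\bar c$ has an $n$th root in $(\Z/p^{e+1})[[t]]$'' to a root with constant term $1$, you are using that any $n$th root of unity $\zeta$ in $\Z/p^{e+1}$ is a unit, so $\zeta^{-1}\bar b$ is again an $n$th root of $\bar c$; this normalization is needed before invoking injectivity of the $m$th-power map on $1+t(\Z/p^{e+1})[[t]]$. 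With that detail noted, the proof goes through.
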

From this we derive the following trivial corollary.
\begin{Cor} \label{HRScor}
Let $p(t) \in \Z[t]$ with $p(0)=1$, and suppose that the coefficients of $p$ reduce modulo $8$ to the fourth power
of a polynomial in $\Z[t]$.  Then $p(t)$ admits a fourth root in $\Z[[t]]$.
\end{Cor}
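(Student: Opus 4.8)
The plan is to derive this as an immediate specialization of the preceding Proposition with $n=4$. First I would pin down the relevant modulus: the only prime dividing $4$ is $2$, so $\mu_4 = 4 \cdot \prod_{p \mid 4} p = 4 \cdot 2 = 8$. This already explains the appearance of $8$ (rather than some other modulus) in the statement of the corollary.

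Next I would unpack the hypothesis. By assumption there is a polynomial $q(t) \in \Z[t]$ with $p(t) \equiv q(t)^4 \imod 8$. Reducing modulo $\mu_4 = 8$, the series $p \imod 8$ is visibly a fourth power in $(\Z/8\Z)[[t]]$, with a fourth root supplied by $q \imod 8$; that is, $p \imod{\mu_4}$ admits a fourth root. Since $p(0)=1$, the constant-term hypothesis $c_0=1$ of the Proposition is satisfied, so the Proposition applies with $c = p$ and $n=4$: it asserts that $p$ and $p \imod{\mu_4}$ admit fourth roots in $\Z[[t]]$ simultaneously. Having just exhibited a fourth root of the reduction, I conclude that $p(t)$ itself admits a fourth root in $\Z[[t]]$, which is the claim.

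Because the argument is a one-line invocation of a cited theorem, there is no substantive obstacle; the only point worth checking is consistency of constant terms. Comparing constant coefficients in $p \equiv q^4 \imod 8$ and using $p(0)=1$ forces $q(0)^4 \equiv 1 \imod 8$, i.e. $q(0)$ must be odd. This is automatic and harmless, since the odd residues modulo $8$ are exactly the units, and each has fourth power congruent to $1$; thus the fourth root of the reduction always has an admissible (unit) constant term and nothing in the reduction step can fail.
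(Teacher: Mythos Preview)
Your proposal is correct and matches the paper's approach exactly: the paper gives no explicit proof, labeling the statement a ``trivial corollary'' of the preceding Proposition, and your argument is precisely the intended one-line specialization to $n=4$ (so that $\mu_4=8$). The additional constant-term consistency check you include is a harmless sanity check not needed for the argument.
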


\begin{Ex} We recall from Example \ref{intex1} that the family of elliptic curves
\[E_t\colon y^2 + (1-t-t^2)x y + (t^2+t^3)y = x^3 + (t^2+t^3)x^2\]
corresponding to $\Gamma_{1}(7)$ has integral level $1$.  Based on the calculation
\[j(t) = -\frac{\left(t^8+12 t^7+42 t^6+56 t^5+35 t^4-14 t^2-4 t+1\right)^3}{12^3 t^7(t+1)^7 \left(t^3+8 t^2+5 t-1\right)}\;\]
and the fact that $\deg g_2 =8$, it follows that $E_t$ admits a holomorphic expansion about $t=\infty$, which
fits the form $f(t)= \sum_{n \geq 0} u_n t^{-n-2}$ by Theorem \ref{picardinf}.  In the notation of Theorem \ref{picardinf}
we have $\alpha_N =1$, hence the integral level of $E_t$ at infinity divides $8$.  To strengthen this, we appeal to
Corollary \ref{HRScor}; we have
\begin{align*}
12t^8 g_2(1/t)/\alpha_N &= t^8 - 4 t^7 - 14 t^6 + 35 t^4 + 56 t^3 + 42 t^2 + 12 t + 1\equiv (t^{2} + t + 1)^4 \imod{8},
\end{align*}
whence Corollary \ref{HRScor} implies that $12t^8g_2(1/t)/\alpha_N$ is a fourth power in $\Z[[t]]$.  By this, Theorem \ref{picardinf}
now shows that $f(t) \in \Z[[t]]$, so $E_t$ has integral level $1$ at infinity.  The associated recurrence of the
holonomic recurrence $\{u_n\}$ is given by
\begin{equation}
\begin{aligned}\label{infseq}
(n+1)^{2}u_{n+1} &= -(9n^{2} + 9n + 3)u_{n} - (13n^{2} + 2)u_{n-1} - (2n-1)^{2}u_{n-2} + (n-1)^2 u_{n-3} \\
u_0 &= 1,\, u_{1} = -3,\, u_{2} = 12,\, u_{3} = -59.
\end{aligned}
\end{equation}
This sequence (up to sign) will be studied in the next section.  One should note the large similarity between the
$\{u_n\}$ recursion and the recursion given in equation \eqref{gamma17recur}.  This is no accident; whenever the
Picard-Fuchs equation for a family $E_t$ admits singular expansions $f_0(t) = \sum u_n t^n$ and $f_\infty(t) = \sum v_n t^{-n -(\deg g_2)/4}$
about $t=0$ and $t=\infty$, respectively, the holonomic recursions $\{u_n\}$ and $\{v_n\}$ are related by the following proposition:
\end{Ex}

\begin{Prop}
Let $\{u_n\}$ and $\{v_n\}$ be as above.  If $\{u_n\}$ satisfies the finite holonomic recursive relation $\sum_k w_k(n) u_{n-k} = 0$
for some $w_k \in \Z[n]$, then $\{v_n\}$ satisfies the recursion $\sum_k w_k(-n)v_{n+k+\delta}=0$, where $\delta \geq 0$ is the minimal
integer such that $v_\delta \neq 0$. Under these recursions, the sequences $\{u_n\}$ and $\{v_n\}$ are uniquely defined by the conditions
$u_0=v_\delta=1$, and $u_n = v_m =0$ for $n<0$ and $m<\delta$.
\end{Prop}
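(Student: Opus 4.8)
The plan is to generate both recurrences from a single differential operator, exploiting the fact that the Euler operator $\theta = t\,\frac{d}{dt}$ acts on \emph{every} monomial $t^a$ as multiplication by the scalar $a$, irrespective of whether $a$ is a positive or a negative exponent. First I would reconstruct, from the given recursion $\sum_k w_k(n)u_{n-k}=0$, a differential operator of the shape $\mathcal{L}=\sum_k t^k B_k(\theta)$ with $B_k\in\Z[\theta]$. Since each $\theta^j$ expands as $t^j(d/dt)^j$ plus lower-order terms, such an $\mathcal{L}$ has polynomial coefficients, and by construction $\mathcal{L}\big[\sum_n a_n t^n\big]=0$ holds precisely when $\{a_n\}$ obeys the recursion; in particular $\mathcal{L}[f_0]=0$. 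Because the hypothesized recursion is the Picard-Fuchs recursion and $f_\infty$ solves the same Picard-Fuchs equation (its existence being furnished by Theorem \ref{picardinf}), the very same operator annihilates $f_\infty$. This identification of a common annihilator for the two singular expansions is the conceptual core of the argument.

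Next I would record the dictionary between the $B_k$ and the $w_k$. Applying $\mathcal{L}$ to $\sum_n u_n t^n$ gives $\mathcal{L}[t^n]=\sum_k B_k(n)\,t^{n+k}$, so collecting the coefficient of a fixed power of $t$ recovers $\sum_k w_k(n)u_{n-k}=0$, with each $w_k$ an explicit shift of the corresponding $B_k$ once the leading coefficient is normalized. The essential point to retain is that $w_k$ is exactly the contribution of the degree-$k$ shift of $\mathcal{L}$, evaluated at the \emph{exponent} of the monomial being mapped.

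The decisive step is to expand $\mathcal{L}[f_\infty]=0$ at infinity. Writing $f_\infty=\sum_{n\geq 0}v_n t^{-n-D}$ with $D=(\deg g_2)/4$, I apply $\mathcal{L}$ termwise; since $\theta[t^{-n-D}]=(-n-D)\,t^{-n-D}$, the \emph{same} polynomials $B_k$ reappear, but now evaluated at the negated, shifted arguments $-n-D$ in place of $n$. Collecting the coefficient of each power $t^{-M-D}$ then produces a recursion whose coefficient polynomials are those of the origin, with the substitution $n\mapsto -n$. Because the expansion at infinity is read in the increasing-index direction (opposite to the origin), the offsets appear as $v_{n+k}$ rather than $v_{n-k}$, and after anchoring the sequence at its first nonzero entry this is exactly $\sum_k w_k(-n)v_{n+k+\delta}=0$, the translation $\delta$ being forced by the gap between the indicial exponents $0$ (at the origin) and $D$ (at infinity). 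I expect the main obstacle to be precisely this bookkeeping: reconciling the reflection $n\mapsto -n$ with the correct index translation $\delta$ and the change of reading direction, so that the final form matches on the nose. The $\Gamma_1(7)$ pair \eqref{gamma17recur} and \eqref{infseq} serves as an indispensable check that the signs and shifts are arranged correctly.

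Finally, uniqueness follows from a nonvanishing property of the leading recursion coefficient. For $\{u_n\}$, the forward recursion solves for $u_n$ with leading coefficient $w_0(n)$, a nonzero polynomial whose only roots are the indicial exponents at the origin; since $f_0$ is the unique holomorphic solution with $u_0=1$, one has $w_0(n)\neq 0$ for $n\geq 1$, so $u_0=1$ together with $u_n=0$ for $n<0$ determines every $u_n$. Applying the same reasoning to the reflected recursion shows that its leading coefficient does not vanish at the relevant shifted integer arguments (again a consequence of the indicial analysis underlying Theorem \ref{picardinf}), whence $v_\delta=1$ and $v_m=0$ for $m<\delta$ determine $\{v_n\}$ uniquely. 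This completes the plan.
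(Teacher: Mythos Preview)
Your proposal is correct and follows essentially the same route as the paper: both arguments exploit that $f_0$ and $f_\infty$ are annihilated by the same Picard-Fuchs operator, and both extract the two recursions by reading off coefficients of the two expansions. The paper works directly with the operator in the form $p(t)+q(t)\partial_t+r(t)\partial_t^2$ and computes $w_k(n)$ explicitly as a quadratic in $n$ with coefficients in the $p_i,q_i,r_i$, then observes by inspection that the $t^{-n-\delta}$ calculation yields $w_k(-n)$; your use of the Euler operator $\theta$ and the normal form $\mathcal{L}=\sum_k t^k B_k(\theta)$ is a cleaner packaging of the same computation, since $\theta[t^a]=a\,t^a$ makes the reflection $n\mapsto -n$ automatic rather than something to be verified term by term. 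Neither approach offers a genuinely different idea, and both leave the same bookkeeping (aligning the shift $\delta$ with $D=(\deg g_2)/4$) as the only delicate point.
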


\begin{proof} Let $p(t)f(t) + q(t)f'(t) + r(t)f''(t)=0$ denote the associated Picard-Fuchs equation, where $p(t)=\sum p_i t^i$, $q(t)= \sum q_i t^i$,
and $r(t)= \sum r_i t^i$ are polynomials in $\Z[t]$.  From the solution $f_0$ we derive
\[\sum_k \left( p_k - kq_{k+1} + r_{k+2}(k+1)k + n(q_{k+1}-r_{k+2}(2k+1))+n^2r_{k+2} \right) u_{n-k} =0\;,\]
seen by equating the $t^n$ coefficient to $0$ and regrouping.  Similarly, the solution $f_\infty$ implies
\[\sum_k \left( p_k - kq_{k+1} +r_{k+2}(k+1)k + n((2k+1)r_{k+2}-q_{k+1}) + n^2r_{k+2} \right) v_{n+\delta +k}=0\;,\]
upon consideration of the $t^{-n -\delta}$ term.  (For simplicity of notation, one should take these sums as infinite with
finite support.) Our first result follows by taking $w_k(n) = p_k - kq_{k+1} +r_{k+2}(k+1)k + n((2k+1)r_{k+2}-q_{k+1}) + n^2r_{k+2}$.
The initial conditions are trivial; merely stated for completeness.
\end{proof}

\subsection{A Congruence Associated to $\Gamma_{1}(7)$}\label{congg17}
We prove Theorem \ref{congthm2} which gives an Atkin-Swinnerton-Dyer congruence for the sequence (up to sign) given in \eqref{infseq}.
In this section, we let $q = e^{2\pi i \tau}$ where $\im(\tau) > 0$.
Note that
$$f(\tau) = q - q^{3} + 2q^{4} + 2q^{5} - 3q^{6} + q^{7} + 3q^{8} + \cdots = q\prod_{n = 1}^{\infty}(1 - q^{n})^{c_{n}}$$
where $c_{n} = 2, 0, 1, -2$ if $n \equiv 0, \pm 1, \pm 2, \pm 3 \imod{7}$, respectively, is a weight 1 modular form for $\Gamma_{1}(7)$.
%The inequivalent cusps of $\Gamma_{1}(7)$ are $\infty$, $4/7$, $5/7$, $0$, $1/2$, and $1/3$.
The function $$t(\tau) = q - 3q^2 + 5q^3 - 6q^4 + 7q^5 - 7q^6 + 3q^7 + 4q^8 +\cdots = q\prod_{n = 1}^{\infty}(1 - q^{n})^{d_{n}}$$
where $d_{n} = 0, 3, -2, -1$ if $n \equiv 0, \pm 1, \pm 2, \pm 3 \imod{7}$, respectively,
is a Hauptmodul for $\Gamma_{1}(7)$ (cf. (4.23) in \cite{Elkies})\footnote{The Hauptmodul that Elkies takes is (in his notation) $y^{2}z/x^{3}$ where
$x$, $y$, and $z$ are defined as in (4.4) of his paper. Since the Hauptmodul is the generator of the function field of $X_{1}(7)$, we have
taken our Hauptmodul to be $-x^{3}/(y^{2}z)$.} and hence is weight 0 modular function. In the following we shall sometimes abuse notation
and write $f(q)$ and $t(q)$ instead of $f(\tau)$ and $t(\tau)$.
%Moreover by \cite[Table 6]{KimKoo},
%\begin{align}\label{haupt_dat}
%\begin{array}{lll}
%t(\infty) = 0 &  t(4/7) = \infty &  t(5/7) = 1\vspace{0.05in}\\
%\displaystyle t(0) = \frac{v^{-1} - w^{-1}}{u^{-1} - w^{-1}} &  \displaystyle t(1/2) = \frac{u^{-1} - v^{-1}}{w^{-1} - v^{-1}} &  \displaystyle t(1/3) = \frac{w^{-1} - u^{-1}}{v^{-1} - u^{-1}}
%\end{array}
%\end{align}
%where $u = 1 - \cos(2\pi/7)$, $v = 1 - \cos(4\pi/7)$, and $w = 1 - \cos(8\pi/7)$. We note that $t(0)$, $t(1/2)$, and $t(1/3)$ are the three
%roots of the polynomial $x^{3} + 5x^{2} - 8x + 1$.

Define $\{v_{n}\}$ such that $f(\tau) = \sum_{n \geq 0}v_{n}t(\tau)^{n}$. Computation yields that the inverse series $q(t)$ of $t(q)$
is $q(t) = t + 3t^{2} + 13t^{3} + 66t^{4} + 365t^{5} + 2128t^{6} + 12859t^{7} + 79745t^{8} + \cdots$
and hence from how $f(q)$ is defined above, we have
$$f(t) = t + 3t^{2} + 12t^{3} + 59t^{4} + 325t^{5} + 1908t^{6} + 11655t^{7} + 73155t^{8} + \cdots.$$
The following proposition shows that these coefficients agree (up to sign) with the sequence from \eqref{infseq}.
\begin{Prop}\label{unprop}
The $\{v_{n}\}$ are such that
$$(n - 1)^{2}v_{n} = (9n^{2} - 27n + 21)v_{n - 1} - (13n^{2} - 52n + 54)v_{n - 2} + (2n - 5)^{2}v_{n - 3} + (n - 3)^{2}v_{n - 4}$$
with $v_{0} = 0$, $v_{1} = 1$, $v_{2} = 3$, $v_{3} = 12$, and $v_{4} = 59$.
\end{Prop}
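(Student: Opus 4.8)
The plan is to reduce the proposition to a single statement about a second-order linear ODE. Writing $\theta = t\,\frac{d}{dt}$, the asserted recurrence is exactly the one attached to the operator
\[
L \;=\; (\theta-1)^2 \;-\; t\,(9\theta^2 - 9\theta + 3) \;+\; t^2(13\theta^2 + 2) \;-\; t^3(2\theta+1)^2 \;-\; t^4(\theta+1)^2,
\]
in the sense that, for $g = \sum_{n \geq 0} v_n t^n$, the coefficient of $t^n$ in $Lg$ is $(n-1)^2 v_n - (9n^2-27n+21)v_{n-1} + (13n^2-52n+54)v_{n-2} - (2n-5)^2 v_{n-3} - (n-3)^2 v_{n-4}$. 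Thus the proposition is equivalent to showing that $g(t) := f(\tau)$, viewed as a power series in $t = t(\tau)$, satisfies $Lg = 0$, together with a check of the first coefficients $v_0 = 0,\ v_1 = 1,\ v_2 = 3,\ v_3 = 12,\ v_4 = 59$. The latter is read off from the explicit expansion $f(t) = t + 3t^2 + 12t^3 + 59t^4 + \cdots$ computed just above the statement; note that the indicial polynomial of $L$ at $t=0$ is $(\theta-1)^2$, so the exponents are $1,1$, which is precisely what forces $v_0 = 0$ and leaves $v_1$ free as an initial value.

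Everything therefore rests on establishing $Lg = 0$, i.e.\ that the weight-$1$ form $f$, expanded in the Hauptmodul $t$, is annihilated by $L$. The conceptual reason is that a weight-$1$ modular form for a genus-zero group, viewed as a function of a Hauptmodul, always solves a second-order Fuchsian ODE, and here that ODE must be a pullback of the Picard-Fuchs operator of \eqref{gamma17pf}. Since $f$ vanishes at the cusp $t=0$ while no solution of \eqref{gamma17pf} does (its exponents there are $0,0$), $g$ cannot solve \eqref{gamma17pf} itself; instead it matches the solution of \eqref{gamma17pf} at $t=\infty$. The route I would take is to invoke the cusp-swapping (Fricke $W_7$) involution, under which the Hauptmodul obeys $t \mapsto -1/t$ and $f$ is carried to itself up to its weight-$1$ automorphy factor, yielding the identity $g(t) = f_\infty(-1/t)/t$, where $f_\infty(t) = \sum_{n \geq 0} u_n t^{-n-2}$ is the solution of \eqref{gamma17pf} at infinity with $\{u_n\}$ governed by \eqref{infseq}. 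Granting this, one finds $v_n = (-1)^{n-1} u_{n-1}$, and the substitution $u_k = (-1)^k v_{k+1}$ followed by the reindexing $n \mapsto n-2$ turns \eqref{infseq} into exactly the recurrence of the proposition; this is the same bookkeeping used in the earlier proposition relating the expansions of a Picard-Fuchs solution at $0$ and at $\infty$.

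The main obstacle is making the step $Lg = 0$ (equivalently $g(t) = f_\infty(-1/t)/t$) fully rigorous, as this is the one place where genuine modular-forms input is required. I see two ways to discharge it. The first is the involution argument just sketched: one must verify that $t(\tau)$ is sent to $-1/t(\tau)$ under $W_7$ and track the automorphy factor of $f$ precisely, so that the expansion of $f$ about $t=0$ is literally the $W_7$-transform of its expansion about the opposite cusp, which is the Picard-Fuchs expansion at infinity. The second, more computational, route avoids the involution entirely: working in $q$ with $D = q\,\frac{d}{dq}$, use $df/dt = Df/Dt$ to compute $g$, $g'$, $g''$ as $q$-series and exhibit the $\C(t)$-linear relation $Lg = 0$ directly. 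Existence of such a relation is automatic, since the relevant space of weight-$4$ quasimodular forms on $\Gamma_1(7)$ is finite-dimensional; the subtlety is upgrading the finite $q$-expansion check into an identity, which requires a valence/dimension bound guaranteeing that agreement to a computable order forces equality. Either way, once $Lg = 0$ is in hand, the recurrence and the initial conditions follow immediately as in the first paragraph.
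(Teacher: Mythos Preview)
Your reduction to the single identity $Lg=0$ is correct, and your second route---compute $g$, $g'$, $g''$ as $q$-series and exhibit the relation---is essentially the paper's proof. The paper packages this via Theorem~1 of \cite{Yang04}: with $D_q=q\,d/dq$, $G_1=D_qt/t$, $G_2=D_qf/f$, one has $D_t^2 f + p_1 D_t f + p_2 f=0$ where $p_1=(D_qG_1-2G_1G_2)/G_1^2$ and $p_2=-(D_qG_2-G_2^2)/G_1^2$; since $t$ is a Hauptmodul these are rational in $t$, and the paper simply computes them, clears denominators to obtain
\[
t^{2}(t-1)(t^{3}+5t^{2}-8t+1)f''+t(3t^{4}+8t^{3}-13t^{2}+1)f'+(t^{2}-t+1)(t^{2}+2t-1)f=0,
\]
and reads off the recurrence. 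This sidesteps any ad hoc valence argument.

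Your first route, via the Fricke involution, has a genuine gap. The assertion that $W_7$ sends the Hauptmodul to $-1/t$ is not justified: for that you would need $t(\tau)$ to have a pole at the cusp $0$, and you would separately need $f$ to be $W_7$-invariant up to the weight-$1$ factor---neither is checked. More fundamentally, you are tacitly identifying the parameter $t$ of the elliptic family \eqref{gamma17equation} (the variable in which $f_\infty$ is a Picard--Fuchs solution) with the Hauptmodul $t(\tau)$ of Section~\ref{congg17}. These are \emph{different} uniformizers for $X_1(7)$: the paper later identifies the Picard--Fuchs parameter as $E_1^2E_2/E_3^3$ (Example~\ref{asym_g17}), whose eta-exponents and $q$-expansion differ from those of the Section~\ref{congg17} Hauptmodul. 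Your formal identity $v_n=(-1)^{n-1}u_{n-1}$ and the ensuing recurrence manipulation are correct, but $g(t)=f_\infty(-1/t)/t$ is precisely the statement in need of proof; justifying it along these lines would require first determining the M\"obius map between the two Hauptmoduls (e.g.\ via the cusp values the paper's Remark cites from \cite{KimKoo}), at which point the involution is superfluous and you are back to a direct computation.
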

\begin{proof}
Define the two differential operators $D_{t} = t\frac{d}{dt}$ and $D_{q} = q\frac{d}{dq}$.
Let $G_{1} = (D_{q}t)/t$, $G_{2} = (D_{q}f)/f$,
\begin{align*}
p_{1}(t) = \frac{D_{q}G_{1} - 2G_{1}G_{2}}{G_{1}^{2}},\quad\text{ and }\quad p_{2}(t) = -\frac{D_{q}G_{2} - G_{2}^{2}}{G_{1}^{2}}.
\end{align*}
Then by Theorem 1 of \cite{Yang04},
$D_{t}^{2}f + p_{1}D_{t}f + p_{2}f = t^{2}f'' + t(p_{1} + 1)f' + p_{2}f = 0.$
Since $t$ is a Hauptmodul, $p_{1}$ and $p_{2}$ are both rational functions of $t$.
Observe that
\begin{align*}
p_{1}(t) = \frac{2t^{4} + 4t^{3} - 9t + 2}{(t - 1)(t^{3} + 5t^{2} - 8t + 1)}\quad \text{ and }\quad p_{2}(t) = \frac{(t^{2} - t + 1)(t^{2} + 2t - 1)}{(t - 1)(t^{3} + 5t^{2} - 8t + 1)}.
\end{align*}
It follows that
\begin{align*}
t^{2}(t-1)(t^3+5t^2-8t+1)f''+t(3t^4+8t^3-13t^2+1)f'+(t^2-t+1)(t^2+2t-1)f=0
\end{align*}
and hence we have the desired recurrence. This completes the proof of Proposition \ref{unprop}.
\end{proof}
\begin{Rem}
One can also prove the above proposition by methods similar to the discussion on
Pages 58--60 of \cite{Beukers} and the values of $t(\tau)$ at the inequivalent cusps of $\Gamma_{1}(7)$
from \cite[Table 6]{KimKoo}.
\end{Rem}
%\begin{Rem}
%We remark that another proof of the above proposition is based off of the discussion on
%pages 58--60 of \cite{Beukers}. We sketch the proof below.
%
%Let $\om(\tau) := f(\tau)$ and
%$\wt{\om}(\tau) = \tau f(\tau)$. From the bottom of page 58 of \cite{Beukers},
%we have that $\om(t)$ and $\wt{\om}(t)$ are solutions to the differential equation
%\begin{align*}
%\left|\mat{\om}{\om'}{\wt{\om}}{\wt{\om}'}\right|F'' - \left|\mat{\om}{\om''}{\wt{\om}}{\wt{\om}''}\right|F' + \left|\mat{\om'}{\om''}{\wt{\om}'}{\wt{\om}''}\right|F = 0.
%\end{align*}
%Since $t$ is a Hauptmodul, we know that the coefficients of $F''$, $F'$, and $F$ are all rational functions in $t$. Furthermore,
%\begin{align*}
%\left|\mat{\om}{\om'}{\wt{\om}}{\wt{\om}'}\right| = \frac{\om^{2}}{dt/d\tau}\quad \text{ , }\quad \left|\mat{\om}{\om''}{\wt{\om}}{\wt{\om}''}\right| = \frac{d}{dt}\left|\mat{\om}{\om'}{\wt{\om}}{\wt{\om}'}\right|
%\end{align*}
%and
%\begin{align*}
%\left|\mat{\om'}{\om''}{\wt{\om}'}{\wt{\om}''}\right| = \frac{1}{(dt/d\tau)^{3}}\left[2\left(\frac{d\om}{d\tau}\right)^{2} - \om\frac{d^{2}\om}{d\tau^{2}}\right].
%\end{align*}
%Using the fact that $\om(\tau)$ vanishes at $\tau = 4/7$ but $t(4/7) = \infty$ and the data from \eqref{haupt_dat} it can be shown that
%$$\frac{\om^{2}}{dt/d\tau} = \frac{t^{2}}{t(t-1)(t^3+5t^2-8t+1)}.$$
%\end{Rem}

To prove Theorem \ref{congthm2}, we use the following theorem of Verrill:
\begin{Thm}[cf. Theorem 1.1 in \cite{Verrill}]\label{verrillthm1}
Let $\Gamma$ be a level $N$ congruence subgroup for $\operatorname{SL}_{2}(\mathbb{Z})$.
Let $t(\tau)$ be a weight 0 modular function for $\Gamma$. Let $f(\tau)$
be a weight $k$ modular form for $\Gamma$, and let $g(\tau)$ be a weight
$k + 2$ modular form for $\Gamma$ which has an Euler product expansion.
Suppose that we can write $$f(\tau) = \sum_{n \geq 0} b_{n}t(\tau)^{n}\text{ and } g(\tau) = \sum_{n \geq 0} \gamma_{n}q^{n}$$ where $q = e^{2\pi i \tau}$.
Suppose that for some integers $M$ and $a_{d}$, $d \mid M$, we have
$$f(\tau)\frac{q}{t}\frac{dt}{dq} = \sum_{d \mid M} a_{d}g(d\tau),$$
then for a prime $p \nmid NM$ and integers $m$, $r$, we have
$$b_{mp^{r}} - \gamma_{p}b_{mp^{r - 1}} + \vep_{p}p^{k + 1}b_{mp^{r - 2}} \equiv 0 \imod{p^{r}}$$
where $\vep_{p}$ is the character of $g$. In particular if
$b_{1} = 1$, then $b_{p} \equiv \gamma_{p}\imod{p}$.
\end{Thm}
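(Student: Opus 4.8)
The plan is to follow the Atkin--Swinnerton-Dyer method, in the form used by Verrill: convert the hypothesized relation among $f$, $t$, and $g$ into an identity expressing a logarithmic antiderivative (whose $t$-expansion coefficients are the $b_n$) in terms of the Eichler integral of $g$, and then combine the Hecke eigenform structure of $g$ with the Frobenius congruence satisfied by the modular function $t$.

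First I would rewrite the hypothesis using $D_q = q\,\frac{d}{dq}$ as $f\,\frac{D_q t}{t} = \sum_{d\mid M} a_d\, g(d\tau)$; note that both sides are weight-$(k+2)$ modular forms, since differentiating the weight-$0$ function $t$ raises the weight by $2$. Writing $f = \sum_{n\geq 0} b_n t^n$ and assuming for simplicity $b_0 = 0$ (the general case differs by a harmless multiple of $D_q t/t$), one has $f\,\frac{D_q t}{t} = D_q\Phi$ with $\Phi := \sum_{n\geq 1}\frac{b_n}{n}\,t^n$. On the other side $g(d\tau)=\sum_n \gamma_n q^{dn}$, so inverting $D_q$ term by term yields the central identity
$$\Phi(q) \;=\; \sum_{d\mid M}\frac{a_d}{d}\,G(q^d), \qquad G(q) \;:=\; \sum_{n\geq 1}\frac{\gamma_n}{n}\,q^n,$$
where $G$ is the Eichler integral of $g$. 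Thus the $b_n$ are recovered from the $t$-expansion of a function built out of Eichler integrals, and the task becomes transferring the $p$-adic properties of $G$ across the substitution $q\mapsto t(q)$.

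I would then invoke the arithmetic of $g$. The Euler product hypothesis is equivalent to the Hecke recursion $\gamma_{mp^r} = \gamma_p\gamma_{mp^{r-1}} - \vep_p p^{k+1}\gamma_{mp^{r-2}}$ for $p\nmid m$, which is exactly the three-term relation that the desired congruence reproduces modulo $p^r$ (general $m$ reduces to this by multiplicativity). The subtlety is that $G$ carries the denominators $1/n$, so the Hecke relation does not transfer to $G$ naively; instead one recognizes $G$ as the logarithm of the formal group attached to $g$, whose coefficients satisfy Honda-type congruences, and pushes these through the change of variable. The essential geometric input---and the main obstacle---is the Frobenius congruence $t(q^p)\equiv t(q)^p \pmod p$, together with its refinements governing the $U_p$-action modulo higher powers of $p$; this encodes the good reduction of the modular curve for $\Gamma$ at primes $p\nmid NM$ and is precisely what absorbs the denominators produced by the $1/n$ weights. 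Making this transfer rigorous---via the Cartier--Honda formal-group machinery of Stienstra and Beukers, or via Katz's theory of $p$-adic modular forms---is where essentially all of the work lies.

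Finally, I would assemble the congruence $b_{mp^r} - \gamma_p b_{mp^{r-1}} + \vep_p p^{k+1} b_{mp^{r-2}} \equiv 0 \pmod{p^r}$ by induction on $r$. The case $r=0$ is vacuous and $r=1$ reduces, upon matching $t$- and $q$-expansions, to $b_p\equiv\gamma_p\pmod p$ when $b_1=1$ (the index $mp^{-1}$ being interpreted as giving a zero term). The inductive step feeds the $(r-1)$-case through the $U_p$-operator, where the Frobenius congruence for $t$ contributes exactly one further power of $p$, upgrading the modulus from $p^{r-1}$ to $p^r$; the coefficient $\vep_p p^{k+1}$ appears as the nebentypus-twisted factor $p^{(k+2)-1}$ coming from the weight $k+2$ of $g$. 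The hypothesis $p\nmid NM$ is used throughout to guarantee both the eigenform relation and the expected reduction of $t$.
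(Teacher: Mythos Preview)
The paper does not prove this theorem at all: it is stated as a quotation of Verrill's result (Theorem~1.1 in \cite{Verrill}) and used as a black box in the subsequent proof of Theorem~\ref{congthm2}. There is therefore nothing in the paper to compare your argument against.

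That said, your outline is a faithful sketch of the method Verrill (and before her Stienstra--Beukers) actually uses. The central identity $\Phi(q)=\sum_{d\mid M}(a_d/d)G(q^d)$ with $\Phi=\sum_{n\geq 1}(b_n/n)t^n$ is exactly the pivot of those proofs, and you have correctly identified that the hard analytic/arithmetic content is the Honda--Cartier formal group machinery that converts the Hecke recursion on $\gamma_n$ into congruences for the $b_n$ across the change of uniformizer $q\mapsto t$. You are also right that the Frobenius congruence $t(q^p)\equiv t(q)^p\pmod p$ and its higher lifts are what make the induction on $r$ go through. Where your write-up is thinnest is precisely where you acknowledge it: the phrase ``making this transfer rigorous\ldots is where essentially all of the work lies'' is accurate, and a referee would want to see either the formal-group argument spelled out or a precise citation to the relevant lemma in Verrill or Stienstra--Beukers. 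The assumption $b_0=0$ that you make ``for simplicity'' also deserves a line explaining why the general case really is harmless (it is, since the constant term contributes only to the $n=0$ coefficient and the congruence is vacuous there).
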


\begin{proof}[Proof of Theorem \ref{congthm2}]
In the notation of Theorem \ref{verrillthm1}, take $g(\tau) = \eta(\tau)^{3}\eta(7\tau)^{3}$ where $\eta(\tau)$ is the Dedekind
eta function. Since the weight 3 cusps forms for $\Gamma_{1}(7)$ (of which $g(\tau)$ is a member) has dimension 1, any modular
form in it is a Hecke eigenform and hence has an Euler product expansion. Also note that in this case, the character of
$g(\tau)$ is the real nontrivial character mod 7 and hence $\vep_{p} = \left(\frac{p}{7}\right)$. A finite computation
yields that
$$f(\tau) \cdot \frac{q}{t}\frac{dt}{dq} = g(\tau)$$
and hence we have
$$v_{mp^{r}} - \gamma_{p}v_{mp^{r - 1}} + \left(\frac{p}{7}\right)p^{2}v_{mp^{r - 2}} \equiv 0 \imod{p^{r}}$$
for all integers $m$ and $r$ and $p \neq 7$ with $\gamma_{p}$ the $p$th coefficient in the $q$-expansion of $\eta(\tau)^{3}\eta(7\tau)^{3}$.
This completes the proof of Theorem \ref{congthm2}.
\end{proof}

%%%%%%%%%%%%%%%%%%%%%%%%%%%%%%%%%%%%%%%%%%%%%%%%%%%%%%%%%%%%%%%%%%%%%%%%%%%%%%%%%%%%%%%%%%%%%%%%%%%%%%%%%%%%%%%%%%%%%%%%%%%%%%%%%%%%%%%%%%%%%%%%%%%%%%%%%%%%%%%%%%%%%%%%%%%%%
\section{Asymptotic Behavior} \label{asymptotics}
We now investigate asymptotics of $\{u_{n}\}$ associated to Picard-Fuchs equations and holonomic recurrences in general.
To do this, we first give the proof of Proposition \ref{picardispoincare}.

\begin{proof}[Proof of Proposition \ref{picardispoincare}]
Let $p = \sum p_i t^i$, $q = \sum q_i t^i$, and $r = \sum r_i t^i \in \C[t]$ denote the coefficients in the Picard-Fuchs
differential equation $p(t) f(t) + q(t) f'(t) + r(t) f''(t)=0$.  Equating coefficients for $t^n$ gives
\begin{align} \label{picardpoincareeq}
\sum_{i = 0}^{\deg(p)} p_i u_{n-i} + \sum_{i=0}^{\deg(q)} q_i (n+1 -i) u_{n+1 - i} + \sum_{i=0}^{\deg(r)} r_i (n+1-i)(n+2-i) u_{n+2-i} = 0
\end{align}
One may show that each zero of $\Delta(t)$ is a zero of $r(t)$, hence $r_0=0$ and the $u_{n+2}$ term in \eqref{picardpoincareeq} is thus not present in the
holonomic recursion for $\{u_n\}$.
%\footnote{Actually, the stronger result $\Delta(t) \mid r(t)$ holds, as can be verified algebraically using \cite{pfprog}.}  
We note that no $u_{n-i}$ coefficient in \eqref{picardpoincareeq} has degree (in $n$) exceeding $2$.
It now suffices to show $r_1 \neq 0$; it would then follow that the degree of the coefficient of $u_{n+1}$ (the leading term) will
not be strictly exceeded, hence $\{u_n\}$ will be of Poincar\'e type.

Suppose $r_1 = 0$, so $r$ has a root of order $2$ at $t=0$.  As the Picard-Fuchs equation is an ordinary differential equation of
Fuchsian type, we must have $q_0=0$ (else the differential equation would admit a
non-regular singular point at $t=0$). Taking $p$, $q$, and $r$ coprime without loss of generality, we then have $p_0 \neq 0$.
Equating the constant term in \eqref{picardpoincareeq} gives $p_0 u_0 + q_0 u_1 + r_0 u_2 =0$.  Thus $p_0 u_0 = 0$, and it follows
that $u_0=0$.  This contradicts the assumption that $u_0=1$ in the canonical holomorphic solution to the Picard-Fuchs equations.
Thus $r_1 \neq 0$, and we have shown that $\{u_n\}$ is of Poincar\'e type.
This completes the proof of Proposition \ref{picardispoincare}.
\end{proof}

\begin{proof}[Proof of Theorem \ref{boundingaround}]
We have $\chi(x) = x^{N+1} - a_0 x^{N} - \ldots - a_{N-1} x - a_N$.  As not all $a_i$ are $0$, we may factor $\chi$ as $\chi(x) = x^m p(x)$,
in which $p(0)$ is strictly negative.  Thus, for some $\varepsilon > 0$, we have $p(\varepsilon)<0$, hence $\chi(\varepsilon) < 0$.
As $\chi$ is asymptotically positive, let $\lambda$ denote the minimal positive root of $\chi$.  For $r=0,\ldots,N$ we have
\begin{align*}
\chi^{(r)}(\lambda) &= \frac{(N+1)!}{(N+1-r)!} \lambda^{N+1-r} - \sum_{k=0}^N a_k \frac{(N-k)!}{(N-k-r)!} \lambda^{k-r}\\
&= \frac{(N+1)!}{\lambda^r(N+1-r)!} \left(\lambda^{N+1} - \sum_{k=0}^N a_k \frac{(N-k)!}{(N+1)!} \cdot \frac{(N+1-r)!}{(N-k-r)!} \lambda^k \right) \\
& > \frac{(N+1)!}{\lambda^r(N+1-r)!}\cdot \chi(\lambda) = 0\;.
\end{align*}
As all derivatives of $\chi$ are non-negative on $[\lambda,\infty)$, it follows that $\chi$ is strictly increasing on that interval
and that $\lambda$ is the unique positive root of $\chi$ (with multiplicity $1$).  Next, define $v_n := n u_n/\lambda^n$, and let
$p_k(n) = a_k \cdot (n+1)(n-k) + r_k(n)$, so that $\deg(r_k) < 2$.  Then \eqref{asymp0} implies that the sequence $\{v_n\}$ satisfies
\[v_{n+1} = \sum_{k=0}^N \frac{p_k(n) v_{n-k}}{\lambda^{k+1} (n-k)(n+1)} = \sum_{k=0}^N \frac{a_k v_{n-k}}{\lambda^{k+1}} + \sum_{k=0}^N \frac{r_k(n) v_{n-k}}{\lambda^k (n-k)(n+1)}\;.\]
We note that the hypothesis $a_k(k-1) + b_k \leq 0$ for all $k$ implies that the polynomials $r_k(n)$ have a non-positive linear term.
Thus, take $R > 0$ such that $R \geq r_k(n)$ for all $n \geq 0$.  We have
\[v_{n+1} \leq \sum_{k=0}^N \frac{a_k}{\lambda^{k+1}} \max(v_n,\ldots, v_{n-N}) + \sum_{k=0}^N \frac{R \max(v_n,\ldots, v_{n-N})}{\lambda^{k+1} (n-k)(n+1)} \leq m_n + \frac{R(N+1)m_n}{\gamma (n-N)(n+1)}\;,\]
where $m_n = \max(v_n,\ldots, v_{n-N})$ and $\gamma = \min(\lambda,\ldots,\lambda^{N+1})$.  Note that the reduction of the first sum
to $m_n$ follows from the (algebraic) definition of $\lambda$.  Take $\beta > 0$ such that $v_{n+1} \leq m_n + \beta m_n/n^2$, and
choose $\ell > 0$ such that $v_n,\ldots, v_{n-N}$ satisfy $v_j < \ell e^{-2\beta/n}$ for some $n\geq \max(3,\beta/2)$ (allowed to
be arbitrarily large).  We claim that $v_{n+1} < \ell e^{-2\beta/(n+1)}$.  To see this, it suffices to show
$\ell e^{-2\beta/n} + \ell \beta e^{-2\beta/n}/n^2 < \ell e^{-2\beta/(n+1)}$, or equivalently,
\begin{align} \label{asymp1}
e^{-2\beta/(n+1)} - e^{-2\beta/n} > \beta e^{-2\beta/n}/n^2.
\end{align}
As the function $e^{-2\beta/x}$ is concave down on $x>\beta/2$, it follows that the left side of \eqref{asymp1} is greater than
$2\beta e^{-\beta/(n+1)}/(n+1)^2$, the derivative of $e^{-2\beta/t}$ at $t=n+1$, and it now suffices to show
$2 e^{2\beta/n} e^{2\beta/(n+1)} > (n+1)^2/n^2$.  For this, simply note that the left side is always greater than
$2$, while $(n+1)^2/n^2$ is less than $2$ for $n \geq 3$.  This gives the claim, hence $v_n < \ell e^{-2\beta/n}$
for large $n$.  In particular, we obtain $v_n \ll \ell$, hence $u_n < \ell \lambda^n/n$ for all $n$ large.
This completes the proof of Theorem \ref{boundingaround}.
\end{proof}

\begin{proof}[Proof of Theorem \ref{bounds2}]
We take all notation from Theorem \ref{boundingaround}.  Note that our additional assumptions imply that
$r_k(n) = r_k$ is a positive integer for all $k$.  We then have
\begin{align} \label{boundseq0}
v_{n+1} = \sum_{k=0}^N \frac{a_k v_{n-k}}{\lambda^{k+1}} + \sum_{k=0}^N \frac{r_k v_{n-k}}{\lambda^{k+1} (n-k)(n+1)}\;.
\end{align}
It follows that $v_{n+1} \geq \min(v_n,\ldots, v_{n-N})>0$ for large $n$, by simply ignoring the righthand sum in
\eqref{boundseq0}, henceforth denoted $R_{n+1}$.  With Theorem \ref{boundingaround} and the lower bound $v_{n+1} > \min(v_n,\ldots,v_{n-N})$,
this now implies that there exists an $N_1$ such that $0 < \alpha \leq v_n < \ell$ for $n > N_1$.  By this, we have
$\vert R_n \vert \leq A/n^2$ for some $A > 0$ and $n >N_1$.  Fix an $\varepsilon > 0$, and choose $N_2>N_1$ such that
$\vert R_n \vert < \varepsilon/n^{3/2}$ for $n>N_2$.  Now let $M>N_2+N$ ($M$ will be further specified later), and
define the sequence $\{w_n\}_{n \geq M-N}$ as follows:
\[w_{n+1} = \sum_{k=0}^N \frac{a_k v_{n-k}}{\lambda^{k+1}}\,,\qquad w_j=v_j \;\text{for } j=M,\ldots,M-N\;.\]
In other words, $\{w_n\}$ represents a linear approximation to $\{v_n\}$ (but with initial conditions that reflect the
non-linear nature of the recursion for $\{v_n\}$ for small $n$).  Next, we define $\delta_n := \vert w_n - v_n \vert$,
which satisfies the recursive inequality:
\begin{align} \label{boundseq1}
\delta_{n+1} = \vert w_{n+1} - v_{n+1} \vert &\leq \sum_{k=0}^N \frac{a_k}{\lambda^{k+1}} \delta_{n-k} + \vert R_{n+1} \vert \leq \max(\delta_{n},\ldots \delta_{n-N}) + \frac{\varepsilon}{(n+1)^{3/2}} \\ \label{boundseq2} \vspace{-1 mm}
& \leq \max(\delta_{M},\ldots ,\delta_{M-N}) + \sum_{k=-N}^{n-M} \frac{\varepsilon}{(M+k)^{3/2}} \leq \zeta(3/2)\varepsilon \;,
\end{align}
by the fact that $\delta_{M-j} =0$ for $j=0,\ldots,N$.  The manipulation of the maximum between \eqref{boundseq1} and \eqref{boundseq2}
is simply given by induction, using \eqref{boundseq1}.  As the sequence $\{w_n\}$ satisfies a truly linear recurrence, we may express
$w_n = \sum \rho_i(n) \omega_i^n$, wherein $\rho_i$ is a polynomial in $n$ and $\omega_i = \lambda_i/\lambda$, corresponding to a
scaling of the eigenvalues of $\chi$.  We have
\[\vert w_x - w_y \vert \leq \sum_{\omega_i} (\vert \rho_i(x) \vert + \vert \rho_i (y) \vert ) \cdot \vert \omega_i^x - \omega_i^y \vert \leq \sum_{\omega_i \neq 1} (\vert \rho_i(x) \vert + \vert \rho_i (y) \vert ) \cdot (\vert \omega_i^x \vert + \vert \omega_i^y \vert )\;.\]
By assumption, all $\omega_i \neq 1$ have magnitude not equal to $1$, and an application of Rouch\' e's Theorem then implies that
$\vert \omega_i \vert <1$ for $\omega_i \neq 1$. Thus there exists an $N_3>0$ such that $x,y > N_3$ implies $\vert w_x - w_y \vert \leq \varepsilon$.
Take $M > N_3$.  It then follows that
\[\vert v_n -v_m \vert \leq \vert w_n - v_n \vert + \vert v_n - v_m \vert + \vert w_m - v_m \vert \leq (2\zeta(3/2) + 1)\varepsilon\;,\]
where we choose $n,m > M$.  Thus $\{v_n\}$ represents a Cauchy sequence, and converges.  Moreover, Theorem \ref{boundingaround} now gives
$\lim_{n \to \infty} v_n = \ell_0 = \inf \ell$, where the infimum is taken over all possible choices of $\ell$ in Theorem \ref{boundingaround}.
Returning to the definition of $u_n$, we now have $u_n \sim \ell_0 \lambda^n/n$, as desired.
This completes the proof of Theorem \ref{bounds2}.
\end{proof}

Suppose that the recursion in Theorems \ref{boundingaround} and \ref{bounds2} arises as the coefficient recursion to a
Picard-Fuchs differential system.  Then the condition $a_k(k-1)+ b_k=0$ is equivalent to the assumption that the
Picard-Fuchs equation is of the form
\begin{align} \label{boundsrem}
A(t) F(t) + B'(t)F'(t)+B(t)F''(t)= A(t)F(t) + (B(t)F'(t))'=0\;,
\end{align}
with $A(t),B(t) \in \Z[t]$ (with $B(0)=0$).
We note that the elliptic families associated to the congruence subgroups $\Gamma_1(7)$, $\Gamma(8;4,1,2)$, and $\Gamma_1(10)$
correspond to differential systems of the form \eqref{boundsrem}.  For $\Gamma_1(7)$ and $\Gamma_1(10)$, Theorem \ref{bounds2}
applies directly.  For $\Gamma(8;4,1,2)$, a similar result will hold under the condition that the associated sequence $\{v_n\}$
can be bounded away from $0$ (since Theorem \ref{boundingaround} does not apply).

\begin{proof}[Proof of Theorem \ref{modform_bound}]
Let $v_{0} = 0$ and $v_{n} = u_{n} - a\ld^{n}/n$. Since $f(t) + a\log(1 - \ld t) \rightarrow b$
as $t\rightarrow (\ld^{-1})^{-}$, we obtain $\sum v_{n}t^{n} \rightarrow b$ for $t \rightarrow (\ld^{-1})^{-}$.
Our result in Theorem \ref{boundingaround} implies that $v_{n} = O(\ld^{n}/n)$. Thus, Littlewood's extension of Tauber's Theorem \cite[p.233--235]{titchmarsh}
(and a change of variables) gives $\sum v_{n}\ld^{-n} = b$, whence
\begin{align}\label{growthofsums}
\sum_{n \leq x}u_{n}\ld^{-n} = a\log x + b + \gamma + o(1).
\end{align}
Let $c(x) = u_{\lfloor x \rfloor}\ld^{-\lfloor x \rfloor}$ and define $C(x) = \int_{0}^{x}c(t)\, dt$. Since $\{u_{n}\ld^{-n}\}$ is eventually
positive and monotonically decreasing, we have
$$(s - r)xc(sx) \leq C(sx) - C(rx) \leq (s - r)xc(rx)$$
for sufficiently large $x$ and arbitrary real numbers $r$ and $s$ with $0 < r < s$. Since \eqref{growthofsums} implies that
$C(sx) - C(rx) = a(\log s - \log r) + o(1)$ as $x \rightarrow \infty$, it follows that
$$\limsup_{x \rightarrow \infty} xc(sx) \leq a\cdot \frac{\log s - \log r}{s - r}$$
for any positive $r < s$. For $s = 1$ and $r \rightarrow 1^{-}$, we obtain $\limsup_{x \rightarrow\infty} xc(x) = a$. Likewise, $$\liminf_{x \rightarrow \infty} xc(rx) \geq a\cdot \frac{\log s - \log r}{s - r}$$
for any positive $r < s$. With $r = 1$ and $s \rightarrow 1^{+}$, we obtain $\liminf_{x \rightarrow \infty} xc(x) = a$.
Thus $\lim_{x \rightarrow \infty}xc(x)$ exists and is equal to $a$, hence $u_{n} \sim a\ld^{n}/n$.
This completes the proof of Theorem \ref{modform_bound}.
\end{proof}

Theorem \ref{bounds2} do not allow us to determine $\ell_{0}$ exactly. If we are just working with a general holonomic recurrence,
then the only option remaining might be to compute $\ell_{0}$ to a very high accuracy. However, if we are working with a holonomic
recurrence which arises from a Picard-Fuchs differential equation, we can use Theorem \ref{modform_bound} and explicitly find $\ell_{0}$
through modular forms. The referee kindly offered us the proof of Example \ref{asym_g17} which applies Theorem \ref{modform_bound}
to show the asymptotic for the $u_{n}$ associated to $\Gamma_{1}(7)$. The method to prove Examples
\ref{asym_g8412} and \ref{asym_g110} should be similar.

\begin{Ex}\label{asym_g17}
Consider the elliptic family associated to $\Gamma_{1}(7)$. The equations parametrizing this family, the associated Picard-Fuchs equation, and
associated recurrence were stated in \eqref{gamma17equation}--\eqref{gamma17recur}.
With $f(t) = \sum_{n \geq 0}u_{n}t^{n}$ denoting the holomorphic solution of the Picard-Fuchs equation at $t = 0$
and $\ld \approx 6.295897$ (the dominant root of $x^{3} - 5x^{2} - 8x - 1$), we show that
$f(t) + a\log(1 - \ld t) \rightarrow b$ as $t \rightarrow (\ld^{-1})^{-}$ for some constants $a$ and $b$ which we give explicitly.

We first identify the modular function $t(\tau)$ and the modular form $f(\tau)$ associated to the differential equation \eqref{gamma17pf}. This can be done using
the idea described on Page 1298 of \cite{clausen_res}. We find
\begin{align*}
t(\tau) = \frac{E_{1}(\tau)^{2}E_{2}(\tau)}{E_{3}(\tau)^{3}}, \quad f(\tau) = \frac{E_{3}(\tau)\eta(\tau)^{2}}{E_{1}(\tau)^{3}E_{2}(\tau)^{5}}
\end{align*}
in which
$$E_{g}(\tau) = q^{7B(g/7)/2}\prod_{n = 1}^{\infty}(1 - q^{7(n - 1) + g})(1 - q^{7n - g})$$
denotes the generalized Dedekind eta function and $B(x) = x^{2} - x + 1/6$.

To continue, we determine the analytic behaviors of $t$ and $f$ at various cusps of $\Gamma_{1}(7)$, taken here as $\infty$, $2/7$, $3/7$, $0$, $1/2$, and $1/3$ (we need not consider the cusp class of $1/7$). For this, we consider the actions of the matrices $\smat{1}{0}{0}{1}$, $\smat{2}{-1}{7}{-3}$, $\smat{3}{-1}{7}{-2}$, $\smat{0}{-1}{7}{0}$, $\smat{7}{3}{14}{7}$, and $\smat{7}{2}{21}{7}$ on $t$ and $f$. By the transformation formulas for $E_{g}(\tau)$ given in \cite[Corollary 2]{yang_transf}, we have
\begin{align}
t(\tau)\bigg|\pmat{2}{-1}{7}{-3} &= -\frac{E_{2}(\tau)^{2}E_{4}(\tau)}{E_{6}(\tau)^{3}} = -q^{-1} + \cdots \label{comp1}\\
t(\tau)\bigg|\pmat{3}{-1}{7}{-2} &= \frac{E_{3}(\tau)^{2}E_{6}(\tau)}{E_{9}(\tau)^{3}} = -\frac{E_{3}(\tau)^{2}E_{1}(\tau)}{E_{2}(\tau)^{3}} = -1 + \cdots. \label{comp2}
\end{align}
For the last three matrices, we note that $E_{g}(\tau) = E_{g, 0}(7\tau)$, where $E_{g, 0}$ is defined in \cite[Page 673]{yang_transf}.
Now, let $\zeta = e^{2\pi i/7}$. The transformation formulas given in \cite[Theorem 1]{yang_transf} imply
\begin{align*}
E_{g}(\tau)\bigg|\pmat{0}{-1}{7}{0} &= E_{g, 0}\left(\frac{-1}{\tau}\right) = e^{\pi i(-1/2 + g/7)}E_{0, -g}(\tau),\\
E_{g}(\tau)\bigg|\pmat{7}{3}{14}{7} &= E_{g, 0}\left(\frac{7\tau + 3}{2\tau + 1}\right) = -ie^{\pi i(2/3 + 3g^{2}/7 - 3g/7)}E_{7g, 3g}(\tau)\\
& = -ie^{\pi i(2/3 + 3g^{2}/7 - 3g/7)}(-1)^{g}\zeta^{-3g^{2}}E_{0, 3g}(\tau),\\
E_{g}(\tau)\bigg|\pmat{7}{2}{21}{7} &= E_{g, 0}\left(\frac{7\tau + 2}{3\tau + 1}\right) = e^{\pi i(-1/6 + 2g^{2}/7 - 2g/7)}E_{7g, 2g}(\tau)\\
&= e^{\pi i(-1/6 + 2g^{2}/7 - 2g/7)}(-1)^{g}\zeta^{-2g^{2}}E_{0, 2g}(\tau).
\end{align*}
Then
\begin{align}
t(\tau)\bigg|\pmat{0}{-1}{7}{0} &= -e^{2\pi i/7}\frac{E_{0, -1}(\tau)^{2}E_{0, -2}(\tau)}{E_{0, -3}(\tau)^{3}} = -\zeta\frac{(1 - \zeta^{-1})^{2}(1 - \zeta^{-2})}{(1 - \zeta^{-3})^{3}} + \cdots\nonumber\\
& = \frac{\sin^{2}(6\pi /7)\sin(2\pi/7)}{\sin^{3}(4\pi/7)} + \cdots =0.15883360\ldots + \cdots,\label{tcusp0}\\
t(\tau)\bigg|\pmat{7}{3}{14}{7} &= -e^{-6\pi i/7}\frac{E_{0, 3}(\tau)^{2}E_{0, 6}(\tau)}{E_{0, 9}(\tau)^{3}}\nonumber\\
&= -\frac{\sin^{2}(4\pi/7)\sin(6\pi/7)}{\sin^{3}(2\pi/7)} + \cdots = -0.86293666\ldots+ \cdots,\label{tcusp1} \\
t(\tau)\bigg|\pmat{7}{2}{21}{7} &= -e^{-4\pi i/7}\frac{E_{0, 2}(\tau)^{2}E_{0, 4}(\tau)}{E_{0, 6}(\tau)^{3}}\nonumber\\
&= -\frac{\sin^{2}(2\pi/7)\sin(4\pi/7)}{\sin^{3}(6\pi/7)} + \cdots = -7.29589694\ldots + \cdots. \label{tcusp2}
\end{align}
In particular, we see that the singularity of the Picard-Fuchs equation at $t \approx 0.15883360$ corresponds to the cusp 0, i.e. $\ld =1/t(0)$, in which $\ld$ is the dominant root of $x^3-5x^3-8x-1$.  Now
$$\eta\left(\frac{-1}{7\tau}\right) = \sqrt{\frac{7\tau}{i}}\eta(7\tau),$$
whereby
\begin{align*}
f\left(\frac{-1}{7\tau}\right) = -\frac{7\zeta^{2}\tau E_{0, -3}(\tau)\eta(7\tau)^{2}}{E_{0, -1}(\tau)^{3}E_{0, -2}(\tau)^{5}} = -Ci\tau(1 + \cdots),
\end{align*}
in which
$$C = \frac{7\sin(4\pi/7)}{128\sin^{3}(6\pi/7)\sin^{5}(2\pi/7)}.$$
In addition, we have
$$f\left(\frac{-1}{7\tau}\right) = \sum_{n = 0}^{\infty}u_{n}t\left(\frac{-1}{7\tau}\right)^{n},$$
which is valid for $\tau$ is a neighborhood containing the upper imaginary axis. For convenience, let $s(\tau) = t(-1/7\tau)$.
Since $\smat{0}{-1}{7}{0}$ normalizes $\Gamma_{1}(7)$, it follows that $s(\tau)$ is a modular function on $\Gamma_{1}(7)$ and thus a rational
function of $t(\tau)$ of degree 1. Likewise, $t$ is a rational function of $s(\tau)$ of degree 1. To be specific,
\begin{align*}
t(\tau) = -\frac{s(2/7) - s(3/7)}{s(\infty) - s(3/7)}\cdot \frac{s(\infty) - s(\tau)}{s(2/7) - s(\tau)},
\end{align*}
because
$t(\infty) = 0$, $t(2/7) = \infty$, and $t(3/7) = - 1$ (from our computation in \eqref{comp1}--\eqref{comp2}).
For $\tau=ix$, $x>0$, our two expressions for $f(-1/7\tau)$ give $\sum_{n = 0}^{\infty}u_{n}s(ix)^{n} = Cx(1 + \cdots).$
Furthermore, we have
$$t(ix) = \frac{E_{1}(ix)^{2}E_{2}(ix)}{E_{3}(ix)^{3}} = e^{-2\pi x}\prod_{n = 1}^{\infty}(1 - e^{-2\pi nx})^{d_{n}},$$
where $d_{n} = 0, 2, 1, -3$ for $n \equiv 0, \pm 1, \pm 2, \pm 3 \imod{7}$, respectively.
Then $x = -(\log t(ix))/(2\pi) + o(1)$ as $x \to \infty$, and it follows that
\begin{align*}
\sum_{n = 0}^{\infty}u_{n}s(ix)^{n} &= -\frac{C}{2\pi}\log t(ix) + o(1) = -\frac{C}{2\pi}\log\left(1 - \frac{s(ix)}{s(\infty)}\right) \\
&\,\,- \frac{C}{2\pi}\log\left(\frac{s(\infty)(s(2/7) - s(3/7))}{(s(\infty) - s(3/7))(s(ix) - s(2/7))}\right) + o(1)
\end{align*}
as $x \rightarrow \infty$. Therefore $f(t) + a\log(1 - t/t(0)) \rightarrow b$ as $t \rightarrow t(0)^{-}$, with
\begin{align*}
a &= \frac{C}{2\pi} =\frac{7\sin(4\pi/7)}{256\pi\sin^{3}(6\pi/7)\sin^{5}(2\pi/7)} \approx 0.3556270700876065 \\
b &= -\frac{C}{2\pi}\log\left(\frac{s(\infty)(s(2/7) - s(3/7))}{(s(\infty) - s(3/7))(s(\infty) - s(2/7))}\right) \approx 0.7144010142820709,
\end{align*}
wherein we have used the identities $t(\tau)|\smat{-1}{0}{0}{1} = t(\tau)$, $s(2/7) = t(-1/2) = t(1/2)$, and $s(3/7) = t(-1/3) = t(1/3)$ as well as the numerical computations in \eqref{tcusp0}--\eqref{tcusp2}.
At this point, an application of Theorem \ref{modform_bound} yields that $u_{n} \sim a\lambda^{n}/n$, in which $\lambda \approx 6.295897$, the dominant root of $x^3-5x^2-8x-1$.

\end{Ex}

\begin{Ex}\label{asym_g8412}
Next, consider the elliptic family associated to $\Gamma(8; 4, 1, 2)$. From \cite{TopYui} we know that the defining equation
is $$y^{2} = x^{3} - 2(8t^{4} - 16t^{3} + 16t^{2} - 8t + 1)x^{2} + (2t - 1)^{4}(8t^{2} - 8t + 1)x.$$
This gives a Picard-Fuchs equation of the form $P_{2}(t)f'' + P_{1}(t)f' + P_{0}f = 0$ where
$P_{2} = t(t-1)(2t - 1)(8t^{2} - 8t + 1)$, $P_{1} = 80t^4 - 160t^3 + 102t^2 - 22t + 1$, and $P_{0} = 4(2t -1)(8t^{2} - 8t + 1)$.
Considering the associated recurrence for $u_{n}$ yields that
$$u_{n} \sim \frac{2}{\pi}\cdot \frac{(4 + 2\sqrt{2})^{n}}{n}.$$
\end{Ex}

\begin{Ex}\label{asym_g110}
Finally, we consider the elliptic family associated to $\Gamma_{1}(10)$ (an index 36 subgroup). Again from \cite{TopYui} we know that the defining equation is
$$y^{2} = x(x^{2} + ax + b)$$ with $a = -(2t^{2} - 2t + 1)(4t^{4} - 12t^{3} + 6t^{2} + 2t - 1)$ and $b = 16t^{5}(t - 1)^{5}(t^{2} - 3t + 1)$.
This gives a Picard-Fuchs equation of the form $P_{2}(t)f'' + P_{1}(t)f' + P_{0}f = 0$ where
$P_{2} = t(t+1)(2t+1)(t^{2} + 3t + 1)(4t^{2} + 2t - 1)$, $P_{1} = 56t^6+240t^5+320t^4+156t^3+12t^2-8t-1$, and $P_{0}  = 2(36t^5+128t^4+136t^3+49t^2+2t-1)$.
Considering the asymptotics of the associated $u_{n}$ gives that
$$u_{n} \sim \frac{2}{\pi\sqrt{5 + 2\sqrt{5}}}\cdot \frac{(1 + \sqrt{5})^{n}}{n}.$$
\end{Ex}

The constants which appear in front of the asymptotics for the recurrences associated to the congruence subgroups of index 24 and 36
are likely related to periods of the elliptic curves associated to the recurrence. 
%This connection is alluded to in \cite{Zagier}. 
Moreover, we conjecture that all the constants associated to such asymptotics will be an algebraic number
divided by some half-integer power of $\pi$.

%%%%%%%%%%%%%%%%%%%%%%%%%%%%%%%%%%%%%%%%%%%%%%%%%%%%%%%%%%%%%%%%%%%%%%%%%%%%%%%%%%%%%%%%%%%%%%%%%%%%%%%%%%%%%%%%%%%%%%%%%%%%%%%%%%%%%%%%%%%%%%%%%%%%%%%%%%%%%%%%%%%%%%%%%%%%%
\section{Final Remarks and Future Work}
Systems of the form described in \eqref{boundsrem} have been studied by Beukers \cite{Beukers2} and
Zagier \cite{Zagier} due to their connection with Ap\'ery's proof of the irrationality of $\zeta(3)$ in \cite{Apery};
as such, their attention is restricted to the special case $\deg A=1$ and $\deg B =3$.  We briefly
describe an extension of this system which arises in the case of certain index $24$ congruence subgroups.
Consider \eqref{boundsrem} with instead $\deg A = 3$ and $\deg B = 5$. Thus we have
\begin{align} \label{diffeq}
\left( b_3t^3 + b_2t^2 + b_1t +b_0 \right) F(t) + \left((c_5t^5+c_4t^4+c_3t^3+c_2t^2 -t)F'(t)\right)'=0\;.
\end{align}
If $b_3 = c_5=0$, then the associated holonomic recurrence will have order $3$.  These we choose to ignore,
as none of the recursions in the index $24$ subgroups are of order less than $4$.  Under the parameter change
$t \mapsto -t$, our coefficient series undergoes $u_n \mapsto (-1)^n u_n$, hence we may suppose $c_2 \geq 0$
without loss of generality, and concentrate a brute force search on the domain of about $28.8$ billion $8$-tuples given by
\[ \vert b_3 \vert \leq 10,\; \vert b_2 \vert \leq 25,\; \vert b_1 \vert \leq 15,\; \vert b_0 \vert \leq 5,\; \vert c_5\vert \leq 5,\; \vert c_4 \vert \leq 10,\; \vert c_3 \vert \leq 15,\; \text{ and } 0 \leq c_2 \leq 10.\]
Searching through seems to give evidence to a variant of one of Zagier's conjectures in \cite{Zagier}; that is,
\begin{Conj}
Any integral solution to the differential equation in \eqref{diffeq} corresponds to the solution of a Picard-Fuchs equation about a singular fiber.
\end{Conj}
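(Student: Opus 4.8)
The plan is to reduce the conjecture to a statement about the monodromy of \eqref{diffeq}, and then to place it within the André--Chudnovsky--Katz theory of $G$-functions, which predicts that a differential operator admitting a globally bounded (in particular, integral) solution is of geometric origin. First I would put \eqref{diffeq} into standard second-order form $B(t)F'' + B'(t)F' + A(t)F = 0$, where $B(t) = t(c_5t^4 + c_4t^3 + c_3t^2 + c_2t - 1)$ and $A(t) = b_3t^3 + b_2t^2 + b_1t + b_0$. The finite singular points are $t=0$ together with the four roots of the quartic factor of $B$, and there is a further singularity at $t = \infty$. At the origin the leading balance is $-tF'' - F' + b_0F = 0$, whose indicial equation is $\rho^2 = 0$; thus $t=0$ carries a double exponent and a logarithmic solution, precisely the local shape of a Picard-Fuchs equation at a singular fibre, as in the pair $\mathcal{F}(t)$ and $\mathcal{F}(t)\log(1/j(t)) + \mathcal{G}(t)$ of the introduction. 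I would then tabulate the exponents at the remaining singular points as functions of the $b_i$ and $c_j$.

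Second, I would exploit the fact that an integral solution $F(t) = \sum u_n t^n$ is automatically a $G$-function, so the (irreducible, order-two) minimal operator annihilating it is a $G$-operator. By the local monodromy theorem for $G$-operators, its local monodromies are quasi-unipotent, forcing all exponents to be rational with bounded denominators. Coupled with the Fuchsian structure and the degree constraints $\deg A = 3$, $\deg B = 5$, this confines the exponent data to a finite list of admissible configurations, which I would match against the exponent profiles that actually occur for elliptic surfaces, classified through Kodaira's list of singular fibres.

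Third --- and here lies the real content --- I would try to promote a single integral solution to the assertion that the projective monodromy group $\Gamma \subset \mathrm{PSL}_2(\C)$ of \eqref{diffeq} is arithmetic, that is, conjugate to a finite-index subgroup of $\mathrm{PSL}_2(\Z)$. Once arithmeticity is in hand the equation is modular, and the self-adjoint form $A(t)F + (B(t)F')' = 0$ identifies $F$, pulled back along a Hauptmodul, with a weight-one modular form, i.e. a period of the universal elliptic curve over the corresponding modular curve; this is exactly the conjecture. The cleanest route to arithmeticity would be the Grothendieck--Katz $p$-curvature conjecture, which would deliver geometric origin outright; failing that, one could imitate Zagier's strategy in \cite{Zagier}, using $p$-adic congruences among the $u_n$ to trap the tuple $(b_i, c_j)$ inside a finite, effectively computable set and then identifying each survivor by hand with a family from \cite{TopYui}.

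The hard part is unquestionably this last step. A single integral solution does not by itself supply the full $G$-function basis required by André's geometricity theorem, and the implication ``integral solution $\Rightarrow$ arithmetic monodromy'' is essentially the open Grothendieck--Katz conjecture in the present setting. What appears genuinely feasible is the unconditional exponent and finiteness reduction of the second step together with a case-by-case geometric realization; completeness of the resulting search --- and hence the conjecture itself --- would remain conditional on $p$-curvature.
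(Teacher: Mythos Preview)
The statement you are attempting to prove is labeled a \emph{Conjecture} in the paper, not a theorem, and the paper offers no proof whatsoever. The only support given is computational: the authors ran a brute-force search over roughly $28.8$ billion $8$-tuples $(b_0,\ldots,b_3,c_2,\ldots,c_5)$ in a bounded box and observed that every integral solution they found matched a Picard--Fuchs equation. That is the entire content --- numerical evidence for what they describe as ``a variant of one of Zagier's conjectures.''

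Your proposal is therefore not comparable to anything in the paper; you are outlining a genuine research program toward an open problem. The program is coherent, and you correctly identify its weak point: promoting a single integral solution to arithmetic monodromy is essentially the Grothendieck--Katz $p$-curvature conjecture (or requires Andr\'e's theorem with a full $G$-function basis, which a single solution does not supply). Your own final paragraph acknowledges that the argument remains conditional on exactly this. So there is no error in your reasoning, but you should be clear that what you have is a conditional reduction plus a plausible finiteness strategy \`a la Zagier, not a proof --- and neither does the paper claim one.
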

While in this paper we concentrated on index 24 subgroups, specifically $\Gamma_{1}(7)$,
the authors expect that the same analysis can be applied to $\Gamma_{1}(10)$ and other subgroups of index 36, 48, and 60 though
such analysis might be more technical.

\subsubsection*{Acknowledgements}
The authors would like to thank Dr. Chris Bremer and Dr. Jerome Hoffman of Louisiana State University for giving us guidance and support
during our research experience at LSU in the summer of 2011. The authors would also like to thank Dr. Peter Paule of RISC-Johannes Kepler
University of Linz for giving us access to his algorithmic combinatorics research group's software; Dr. Noam Elkies of Harvard University,
Dr. Yifan Yang of National Chiao Tung University, and Dr. Doron Zeilberger of Rutgers
University for answering multiple questions the authors had, and finally Louisiana State University and the NSF-REU program for providing
the authors with a place to do research. The authors are grateful to the referee for the proof of Example \ref{asym_g17}
and the valuable suggestions for improving this paper.

\bibliographystyle{amsplain}
\bibliography{pf_ref}
\end{document}